\documentclass[10pt,a4paper,twoside,reqno]{amsart}
\usepackage[a4paper,top=3cm,bottom=3cm,inner=2.5cm,outer=2.5cm]{geometry}

\usepackage[utf8]{inputenc}
\usepackage[english]{babel}

\usepackage{xcolor}

\usepackage{amssymb,mathtools,amsmath,amsthm,stmaryrd}
\usepackage[all,cmtip,2cell]{xy} 

\usepackage[draft=false, hidelinks, linkcolor = blue]{hyperref}
\usepackage{cleveref}
\usepackage{tensor}
\usepackage{bbm}
\usepackage{url}
\usepackage{bookmark}
\usepackage{footmisc}

\usepackage{graphicx}
\usepackage{lmodern}
\usepackage{enumitem}
\usepackage{array}

\def\defthm#1#2#3#4{
  \newtheorem{#1}[theorem]{#3}
  \newtheorem*{#1*}{#3}
  \newtheorem{#2}[theorem]{#4}
  \newtheorem*{#2*}{#4}
  \crefname{#1}{#3}{#4}
  \crefname{#2}{#4}{#4}  
}

\newtheoremstyle{mythm}%
{10pt}
{}
{\itshape}
{}
{\bf}
{.}
{.5em}
{}%

\newtheoremstyle{mydef}%
{10pt}
{3pt}
{}
{}
{\bf}
{.}
{.5em}
{}%

\newtheoremstyle{myrmk}%
{10pt}
{3pt}
{}
{}
{\bf}
{.}
{.5em}
{}%

\theoremstyle{mythm}
\newtheorem{theorem}{Theorem}[section]
\newtheorem*{theorem*}{Theorem}

\defthm{corollary}{corollaries}{Corollary}{Corollaries}
\defthm{lemma}{lemmata}{Lemma}{Lemmata}
\defthm{proposition}{propositions}{Proposition}{Propositions}
\defthm{axiom}{axioms}{Axiom}{Axioms}
\defthm{propdef}{props/defs}{Proposition/Definition}{Prositions/Definitions}
\defthm{defcor}{defscors}{Corollary/Definition}{Corollaries/Definitions}
\defthm{conjecture}{conjectures}{Conjecture}{Conjectures}

\theoremstyle{mydef}
\defthm{definition}{definitions}{Definition}{Definitions}

\theoremstyle{myrmk}
\defthm{acknowledgment}{acknowledgments}{Acknowledgment}{Acknowledgments}
\defthm{remark}{remarks}{Remark}{Remarks}
\defthm{motivation}{motivations}{Motivation}{Motivations}
\defthm{example}{examples}{Example}{Examples}
\defthm{question}{questions}{Question}{Questions}
\defthm{observation}{observations}{Observation}{Observations}
\defthm{claim}{claims}{Claim}{Claims}
\defthm{notation}{notations}{Notation}{Notations}
\defthm{terminology}{terminologies}{Terminology}{Terminologies}
\defthm{construction}{constructions}{Construction}{Constructions}
\defthm{conclusion}{conclusions}{Conclusion}{Conclusions}

\newtheorem*{replemmax}{\reptitle}
 {\end{replemmax}}

\newtheorem*{repthmx}{\reptitle}
\newenvironment{repthm}[1]{%
 \def\reptitle{Theorem \ref*{#1}}%
 \begin{repthmx}}%
 {\end{repthmx}}

\newtheorem*{repcorx}{\reptitle}
 {\end{repcorx}}

\newcommand{\twocat}{2\text{-}\mathrm{Cat}}

\begin{document}

\title[Symmetry shifting for monoidal bicategories]{
Symmetry shifting for monoidal bicategories%
}
\author[R. Stenzel] {Raffael Stenzel}

\begin{abstract}
We show that every braiding on a monoidal bicategory induces a monoidal structure on its bicategory of monoids, such that 
if the former is sylleptic or symmetric then the latter is braided or symmetric, respectively. This extends a classic theorem of 
Joyal and Street for monoidal categories. The proof presented in this paper is an application of the $\infty$-operadic Additivity Theorem and 
thereby averts any considerable calculations.
\end{abstract}

\date{\today}

\maketitle

\section{Introduction}\label{sec_intro}

Categorified higher dimensional algebra is the focal point of a variety of mathematical activity in both foundations and applications 
across algebra, topology, geometry, physics and logic. This applies particularly to the fields of topological quantum field theories,
K-theory and representation theory \cite{baezdolan_cat, schommerpries_thesis, lurieha}. 
Indeed, monoidal category theory was part of Mac Lane's standard canon of category theory already \cite{maclane}, and since has 
flourished into a well-developed theory with applications in a plethora of disciplines \cite{joyalstreet}. The study of 
monoidal bicategory theory was initiated by Kapranov and Voevodsky \cite{kapranovvoevodskyI,kapranovvoevodskyII}, and since has 
undergone substantial development as well \cite{johnsonyau}. The 2-dimensional theory is 
motivated both by way of its applications in physics and topology (to remedy short-comings of its 1-dimensional counterpart) as well as 
by its fundamental role in the formalisation of low dimensional category theory in general \cite{gurski_3coh}. The fully 
algebraic theory in two dimensions however is notoriously difficult to navigate due to the sheer number and size of coherence axioms to 
consider at all times. This applies all the more to theories of categorified algebra in dimension 3 \cite{hoffnung_tetra}, or higher.
On the other side of the spectrum, monoidal $\infty$-category theory has more recently been developed following the operadic tradition of 
May instead \cite{may_loop, lurieha}. Here, the coherence axioms are built into the machinery by homotopy theoretical means, which further 
opens the opportunity to use topological and geometric ideas as well.

This paper constitutes the second of a series on the $\infty$-categorical algebra of bicategories. 
Its ultimate aim is to use the results of higher algebra as pivotally developed in \cite{lurieha} to give fairly 
straightforward proofs of otherwise computationally long-winded results in low dimensional category theory. 
To motivate the main result of this paper as well as the convenience of higher algebra to prove it, we recall the following well-known result 
about monoidal categories.

\begin{repthm}{cor_main1cat}[Joyal--Street]
Let $\mathcal{C}$ be a monoidal category and $\mathrm{Mon}(\mathcal{C})$ the category of monoids in $\mathcal{C}$.
\begin{enumerate}
\item Every braiding on $\mathcal{C}$ induces a monoidal structure on $\mathrm{Mon}(\mathcal{C})$.
\item Every symmetry on $\mathcal{C}$ induces a symmetric monoidal structure on $\mathrm{Mon}(\mathcal{C})$.
\end{enumerate}
In both cases the corresponding structure is canonically induced, i.e.\ the forgetful functor
$\mathrm{Mon}(\mathcal{C})\rightarrow\mathcal{C}$ is as monoidal as its domain is.
\end{repthm}

We refer to Theorem~\ref{cor_main1cat} as symmetry shifting for monoidal categories, the reason of which will become clear below.
To prove the theorem, one may without loss of generality assume that the monoidal category $\mathcal{C}$ is a strict monoidal category by 
way of Mac Lane's celebrated Coherence Theorem. Then assuming that $\mathcal{C}$ is equipped with a braiding $\beta$, the tensor product of 
two monoids $A$ and $B$ can be explicitly computed by way of the tensor product of their underlying objects in $\mathcal{C}$. Its multiplication is given by the composition
\begin{align}\label{equ_intro_mono_mult}
\scalebox{0.9}{$
(A\otimes B)\otimes (A\otimes B)\xrightarrow{=}A \otimes (B\otimes A)\otimes B\xrightarrow{\beta}A\otimes (A\otimes B)\otimes B\xrightarrow{=}(A\otimes A)\otimes (B\otimes B)\xrightarrow{m_A\otimes m_B}A\otimes B.$}
\end{align}

The due axioms are manageable to be computed by hand. The main result of this paper is the following symmetry shifting theorem in dimension 
2.

\begin{repthm}{mainthmgray}
Let $\mathcal{C}$ be a monoidal bicategory and $\mathrm{Mon}(\mathcal{C})$ be the bicategory of monoids in $\mathcal{C}$.
\begin{enumerate}
\item Every braiding on $\mathcal{C}$ induces a monoidal structure on $\mathrm{Mon}(\mathcal{C})$.
\item Every syllepsis on $\mathcal{C}$ induces a braided monoidal structure on $\mathrm{Mon}(\mathcal{C})$.
\item Every symmetry on $\mathcal{C}$ induces a symmetric monoidal structure on $\mathrm{Mon}(\mathcal{C})$.
\end{enumerate}
In each case the corresponding structure is canonically induced, i.e.\ the forgetful functor
$\mathrm{Mon}(\mathcal{C})\rightarrow\mathcal{C}$ is as monoidal as its domain is.
\end{repthm}

Once again, to prove the theorem, one may without loss of generality assume that the monoidal bicategory $\mathcal{C}$ is a Gray monoid by 
way of Gordon, Power and Street's Coherence Theorem \cite{gps_coherence}. Here however, first, one is to be careful with the precise 
definition of monoidal structures, braidings, syllepses and symmetries. Second, even after the process of semi-strictifying $\mathcal{C}$ to 
a Gray monoid, the direct computations are much longer and involved simply by virtue of the amount of axioms to verify individually.
And third, there clearly is a pattern emerging when considering Theorem~\ref{cor_main1cat} and Theorem~\ref{mainthmgray} in sequence.

The primary goal of this paper is to formulate and prove the following $\infty$-categorical version of Theorem~\ref{mainthmgray} and thereby 
address all three of the above comments.

\begin{repthm}{thm_main_12}
Let $\mathcal{C}$ be an $\mathsf{E}_k$-monoid in the cartesian monoidal $\infty$-category $\text{BiCat}^{\times}$ of 
bicategories. Let $0\leq m\leq k$ be an integer. Then the bicategory $\mathrm{Alg}_{\mathsf{E}_m}(\mathcal{C})$ of $m$-fold pseudomonoids 
in $\mathcal{C}$ carries a canonical $E_{k-m}$-monoid structure.\footnote{Here, for $k=\infty$, $0\leq m\leq \infty$, we set
$k - m =\infty$.} Furthermore, the bifunctor \[U\colon\mathrm{Alg}_{\mathsf{E}_m}(\mathcal{C})\rightarrow \mathcal{C}\]
which assigns to an $m$-fold pseudomonoid its underlying object lifts to a morphism of $\mathsf{E}_{k-m}$-monoids.
\end{repthm}

\begin{notation*}
Although potentially confusing in the context of the notions at hand, the term ``$\infty$-category'' refers to ``$(\infty,1)$-category'' 
following the common conventions. And since we use the results of \cite{lurieha} extensively throughout this paper, this more 
precisely shall mean ``quasi-category'' specifically.
\end{notation*}

Informally, Theorem~\ref{thm_main_12} states that the assignment $\mathrm{Alg}_{\mathsf{E}_m}(-)$ introduces a shift on the degree of 
symmetry of its input by $m$. This motivates the name of the two theorems above, and hence of the paper. To understand how 
Theorem~\ref{thm_main_12} and Theorem~\ref{mainthmgray} are related, we note that, up to equivalence, 
the $\mathsf{E}_1$-monoids in $\text{BiCat}^{\times}$ stand in canonical 1-1 correspondence to the monoidal bicategories in the original algebraic sense of Kapranov--Voevodsky. Similarly, the bicategory of $\mathsf{E}_1$-algebras in a monoidal bicategory $\mathcal{C}$ consists 
precisely of the monoids in $\mathcal{C}$ in the original algebraic sense.
In general, $\mathsf{E}_k$-algebras for larger integers $k$ are understood as associative algebra structures with a degree of 
commutativity, where full commutativity is recovered at $k=\infty$. The bound $0\leq m\leq k$ in Theorem~\ref{thm_main_12} is hence an 
instant of what is often referred to as the microcosm principle. The coining of this principle is commonly attributed to Baez and Dolan. It 
states that ``certain algebraic structures can be defined in any category equipped with a categorified version of the same structure'', or 
in other words, that ``every feature of the microcosm [\dots] corresponds to some feature of the macrocosm'' \cite{baezdolan3}.
In terms of this vernacular, the argument that lies at the heart of Theorem~\ref{thm_main_12}, and of Theorem~\ref{mainthmgray} subsequently,  
is the simple observation that the collection of macrocosms itself is monoidal, and that the assignment which maps a macrocosm $\mathcal{C}$ 
to the collection of microcosms in $\mathcal{C}$ is a monoidal functor. The statements then essentially follow formally by way of the 
Additivity Theorem. This will be explained in more detail in the Outline.

To relate the higher $\mathsf{E}_k$-structures on a bicategory to their algebraic counterparts, one is to identify the higher algebra of 
the cartesian monoidal $\infty$-category $\text{BiCat}^{\times}$ of bicategories in the sense of \cite{lurieha} with the
2-dimensional algebra of bicategories in the sense of \cite{daystreet} and \cite{joyalstreet} to an extent specified in 
Theorem~\ref{thm_translatealghom}. This identification, which is topic of the first paper of this series \cite{rs_halgicatI}, will allow to 
prove Theorem~\ref{mainthmgray} from Theorem~\ref{thm_main_12} with minimal indulgence in the pasting of daring diagrams which is typical for 
2-dimensional algebra otherwise. 

Subject to a further characterization of $m$-fold pseudomonoid structures inside a bicategory in terms of corresponding braidings 
that we leave open for future work, one can show a variation of Theorem~\ref{mainthmgray} that applies to 
the bicategory of pseudomonoids with extra monoidal structure as well (Theorem~\ref{mainthmgray2}).

\paragraph{Related Work}
Theorem~\ref{mainthmgray} was first discovered by Nick Gurski \cite{gurski_stability} and independently by Nicola Gambino, 
both of whom proved it by way of direct algebraic calculations. However, to the best of our knowledge, the literature does not 
contain a proof. A similar type of statement about the 2-category of modules associated to a braided (symmetric) monoid inside a
braided (sylleptic/symmetric) multifusion 2-category can be found in \cite[Theorems A,B,C]{decoppetyu2cat}. This also was proven by way of 
direct algebraic calculations.

\paragraph{Outline of the technical aspects}

To understand how the homotopy-coherent tools of higher algebra are useful to prove Theorem~\ref{mainthmgray}, we note that 
the theorem is a priori a statement about constructions in the tetracategory of reduced tricategories. That is, effectively, in the 
tricategory of monoidal bicategories. However, even though bicategories are not $(2,1)$-categories themselves, one quickly 
realizes that the structures in Theorem~\ref{mainthmgray} are in fact constructions in the $(3,1)$-category $\mathrm{BiCat}$ of 
bicategories. That is, because each occurring notion is defined in terms of higher invertible cells only.
Furthermore, one may observe that the bicategories $\mathrm{Mon}(\mathcal{C})$ are the objectwise values of a $(3,1)$-functor
\begin{align}\label{equ_intro_mon}
\mathrm{Mon}(-)\colon\mathrm{MonBicat}\rightarrow\mathrm{BiCat}
\end{align}
defined on the $(3,1)$-category of monoidal bicategories. To define the functor (\ref{equ_intro_mon}), we will use 
that the $(3,1)$-category $\mathrm{BiCat}$ can be realized as the usual underlying homotopy $\infty$-category of the Lack model structure 
on 2-categories. This allows us to reduce the problem of 3-dimensional functoriality of (\ref{equ_intro_mon}) to the construction of a 
homotopically well behaved 1-dimensional presentation thereof. The statements in Theorem~\ref{mainthmgray} then are a formal 
consequence of the following facts: 
\begin{enumerate}
\item We may replace the $(3,1)$-category $\mathrm{MonBicat}$ by the $(3,1)$-category
$\mathrm{Alg}_{\mathsf{E}_1}(\mathrm{BiCat}^{\times})$ of homotopy-coherent monoids in $\mathrm{BiCat}^{\times}$ 
(Lemma~\ref{lemma_grmon=E1}).
\item The functor (\ref{equ_intro_mon}) is cartesian monoidal (Corollary~\ref{cor_inftypsmon}). It hence induces lifts of the form
\[\mathrm{Alg}_{\mathsf{E}_k}(\mathrm{Mon}(-))\colon\mathrm{Alg}_{\mathsf{E}_{k+1}}(\mathrm{BiCat}^{\times})\xrightarrow{\simeq}\mathrm{Alg}_{\mathsf{E}_{k}}(\mathrm{Alg}_{\mathsf{E}_{1}}(\mathrm{Bicat}^{\times}))\rightarrow\mathrm{Alg}_{\mathsf{E}_{k}}(\mathrm{BiCat}^{\times}).\]
\item Braided, sylleptic and symmetric monoidal bicategories are all $\mathsf{E}_k$-algebras for corresponding integers $k$ each (Theorem~\ref{thm_translatealghom}).
\end{enumerate}
These three facts together essentially assure that the structures we are interested in are all corepresentable in a suitable
$\infty$-category, and that $\mathrm{Mon}(-)$ is a morphism in that $\infty$-category.
Theorem~\ref{mainthmgray} is hence an instance of postcomposition and the Additivity Theorem (recalled in Theorem~\ref{thm_add}).

One of the benefits of this proof is that it is not specific to 2-dimensional algebra but applies to other contexts as well (e.g. to
$\infty$-categories as in \ Section~\ref{sec_infty}, or to ordinary categories as in Section~\ref{sec_1cat}). In particular, despite its 
global character, it recovers the explicit local formula (\ref{equ_intro_mono_mult}) for the multiplication of two monoids in any individual 
monoidal category (Proposition~\ref{prop_mult_explicit}).

\paragraph{Organisation of the paper}
A reminder of the relevant $\infty$-categorical notions, as well as a purely $\infty$-categorical version of Theorem~\ref{thm_main_12}
for motivation is given in Section~\ref{sec_infty}. Section~\ref{sec_1cat} discusses the direct application of this fragment of higher 
algebra to the theory monoidal categories, again mostly for motivation. This includes an $\infty$-categorical proof of 
Theorem~\ref{cor_main1cat}. Section~\ref{sec_modbicat} recalls the 2-dimensional algebra of bicategories in terms of its semi-strict 
Gray monoidal presentation, discusses its associated homotopy theory, and defines the functor (\ref{equ_intro_mon}). It further uses the 
results of Section~\ref{sec_infty} to give a proof of Theorem~\ref{thm_main_12} and subsequently of Theorem~\ref{mainthmgray}.
Section~\ref{sec_future} discusses possible future work.
\begin{acknowledgments*}
This paper was written in order to answer a question posed by Nicola Gambino; without his impetus and continuous support the paper would not 
have come to be. The idea to use higher algebra in this context arose from suggestions by Clark Barwick and James Cranch. 
The author would like to thank John Bourke for his insightful comments, Amar Hadzihasanovic for his kind invite to Tallinn to discuss the 
results, and Adrian Miranda as well as Mike Shulman for helpful discussions. This material is based upon work supported by the US Air 
Force Office for Scientific Research under award number FA9550-21-1-0007.
\end{acknowledgments*}

\section{Monoidal $\infty$-categories and monoids therein}\label{sec_infty}
The first subsection is a brief reminder of the relevant definitions, theorems and examples in \cite{lurieha}.
In the second subsection we discuss a straightforward $\infty$-categorical version of Theorem~\ref{thm_main_12}. 

\subsection*{Brief preliminaries}

\paragraph{$\infty$-Operads}

The notion of an $\infty$-operad generalises the notion of a (coloured) symmetric multicategory in essentially the same way as the notion 
of an $\infty$-category generalises that of a category. More precisely, let $\mathrm{Fin}_{\ast}$ denote the category of finite pointed 
sets and base-point preserving functions. We consider $\mathrm{Fin}_{\ast}$ as an $\infty$-category. We recall that an $\infty$-operad
$\mathcal{O}\rightarrow\mathrm{Fin}_{\ast}$ is defined to be a morphism of simplicial sets that satisfies the Segal condition, and that 
further exhibits cocartesian lifts to all inert maps in $\mathrm{Fin}_{\ast}$ \cite[Definition 2.1.1.10]{lurieha}. Every such morphism
$\mathcal{O}\rightarrow\mathrm{Fin}_{\ast}$ is in particular an isofibration, and so the domain $\mathcal{O}$ of an $\infty$-operad is 
always an $\infty$-category. The latter is sometimes referred to as the $\infty$-category of operators associated to the $\infty$-operad. 
We often will refer to an $\infty$-operad simply by way of its $\infty$-category of operators.
The definition implies that the fibers $\mathcal{O}([n])$ of an $\infty$-operad are canonically equivalent to the $n$-ary 
product $\prod_{1\leq i\leq n}\mathcal{O}([1])$; in particular, an element in $\mathcal{O}([n])$ is essentially an $n$-tuple of elements in
$\mathcal{O}([1])$. Given an $n$-tuple $\vec{x}\in\mathcal{O}([n])$ and a single object $y\in\mathcal{O}([1])$,
the definition further allows to think of the mapping space $\mathcal{O}(\vec{x},y)$ as the space of 
multi-morphisms from $\vec{x}$ to $y$ in the $\infty$-operad $\mathcal{O}$. For general objects $\vec{y}\in\mathcal{O}([m])$, the mapping 
space $\mathcal{O}(\vec{x},\vec{y})$ simply decomposes into the product of the mapping spaces $\mathcal{O}(\vec{x},y_i)$. In  
partiulcar, whenever the $\infty$-operad is single coloured, i.e.\ whenever $\mathcal{O}([1])$ is contractible with center of contraction 
$x$, then the mapping space $\mathcal{O}((x,\dots,x),x)$ may be thought of to consist of the $n$-ary operations associated to the
$\infty$-operad.

We further recall that a symmetric monoidal $\infty$-category $\mathcal{C}^{\otimes}\twoheadrightarrow\mathrm{Fin}_{\ast}$ is an
$\infty$-operad that exhibits cocartesian lifts to all morphisms in
$\mathrm{Fin}_{\ast}$ \cite[Example 2.1.2.18]{lurieha}. In particular, a symmetric monoidal $\infty$-category is a cocartesian fibration
$\mathcal{C}^{\otimes}\twoheadrightarrow\mathrm{Fin}_{\ast}$. By way of the $\infty$-categorical Grothendieck construction, it can be 
equivalently described as a Segal object $\mathrm{Fin}_{\ast}\rightarrow\mathrm{Cat}_{\infty}$ with a contractible $\infty$-category of 
objects. Essentially, that is, as an $\infty$-category object in $\mathrm{Cat}_{\infty}$ with a single object and commutative composition.
The underlying $\infty$-category of a symmetric monoidal $\infty$-category $\mathcal{C}^{\otimes}$ is given by the fiber
$\mathcal{C}:=\mathcal{C}^{\otimes}(\ast)$ over the unit $\ast\in\mathrm{Fin}_{\ast}$.

\paragraph{Algebras for $\infty$-operads}

Given an $\infty$-operad $\mathcal{O}$, the class of cocartesian morphisms in $\mathcal{O}$ over inert maps in $\mathrm{Fin}_{\ast}$ is 
called the class of inert morphisms in $\mathcal{O}$. Given another $\infty$-operad $\mathcal{V}$, one may consider the full
sub-$\infty$-category $\mathrm{Alg}_{\mathcal{O}}(\mathcal{V})\subseteq\mathrm{Fun}_{\mathrm{Fin}_{\ast}}(\mathcal{O},\mathcal{V})$ spanned 
by those morphisms of simplicial sets over $\mathrm{Fin}_{\ast}$ that preserve inert morphisms. This $\infty$-category is often referred to 
as the $\infty$-category of morphisms of $\infty$-operads from $\mathcal{O}$ to $\mathcal{V}$, or equivalently, as the $\infty$-category of 
$\mathcal{O}$-algebras in $\mathcal{V}$.

\begin{proposition}\label{prop_bv}
Let $\mathcal{C}^{\otimes}$ be a symmetric monoidal $\infty$-category and $\mathcal{V}$ be an $\infty$-operad. Then the $\infty$-category
$\mathrm{Alg}_{\mathcal{V}}(\mathcal{C}^{\otimes})$ inherits a symmetric monoidal structure from that of $\mathcal{C}^{\otimes}$.
\end{proposition}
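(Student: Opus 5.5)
The plan is to realize $\mathrm{Alg}_{\mathcal{V}}(\mathcal{C}^{\otimes})$ as the underlying $\infty$-category of the symmetric monoidal $\infty$-category of operad maps into $\mathcal{C}^{\otimes}$, using the Boardman--Vogt tensor product of $\infty$-operads (\cite[Section 2.2.5 and Example 3.2.4.4]{lurieha}). Concretely, I would construct a map $\mathrm{Alg}_{\mathcal{V}}(\mathcal{C})^{\otimes}\rightarrow\mathrm{Fin}_{\ast}$ whose fiber over $\langle n\rangle$ is $\mathrm{Alg}_{\mathcal{V}}(\mathcal{C})^{\times n}$. It would be defined by the universal property
\[\mathrm{Fun}_{\mathrm{Fin}_\ast}\bigl(K,\mathrm{Alg}_{\mathcal{V}}(\mathcal{C})^{\otimes}\bigr)\cong\mathrm{Fun}'\bigl(K\times_{\mathrm{Fin}_\ast}(\mathcal{V}\times\mathrm{Fin}_\ast)\xrightarrow{\wedge}\mathcal{C}^{\otimes}\bigr),\]
that is, via the smash product functor $\wedge\colon\mathrm{Fin}_\ast\times\mathrm{Fin}_\ast\rightarrow\mathrm{Fin}_\ast$. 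Here $\mathrm{Fun}'$ denotes the maps that send pairs (arbitrary morphism of $K$, inert morphism of $\mathcal{V}$) to inert morphisms of $\mathcal{C}^{\otimes}$, and in addition send pairs (inert of $K$, arbitrary of $\mathcal{V}$) to inert morphisms. This is exactly Lurie's construction of the pointwise tensor product on $\mathrm{Alg}_{\mathcal{V}}(\mathcal{C})$. On objects it multiplies two $\mathcal{V}$-algebras $A,B$ to the composite of $(A,B)$ with $\otimes\colon\mathcal{C}^{\otimes}\times_{\mathrm{Fin}_\ast}\mathcal{C}^{\otimes}\rightarrow\mathcal{C}^{\otimes}$. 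This is the $\infty$-categorical analogue of formula (\ref{equ_intro_mono_mult}), in which the symmetry of $\mathcal{C}^{\otimes}$ is used to interchange factors.

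The key steps, in order, are as follows.

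First, check that the fiber over $\langle 1\rangle$ is $\mathrm{Alg}_{\mathcal{V}}(\mathcal{C}^{\otimes})$. This is immediate, since smashing with $\langle 1\rangle$ is the identity.

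Second, verify the Segal condition. Here one uses that $\mathcal{C}^{\otimes}(\langle n\rangle)\simeq\mathcal{C}^{n}$ compatibly with inert maps. Then a $\mathcal{V}$-indexed family over $\langle n\rangle\wedge-$ splits into $n$ independent $\mathcal{V}$-algebras, because inert maps out of $\langle n\rangle\wedge\langle m\rangle$ factor through the $n$ projections.

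Third, and this is the main point, show that $\mathrm{Alg}_{\mathcal{V}}(\mathcal{C})^{\otimes}\rightarrow\mathrm{Fin}_\ast$ is a cocartesian fibration. I would exhibit it as a full subcategory of a relative functor category $\mathrm{Fun}_{\mathrm{Fin}_\ast}$ of the shape above, which is a cocartesian fibration because $\mathcal{C}^{\otimes}$ is. For this I would appeal to \cite[Proposition 3.2.4.3]{lurieha}, or reprove it via the relative exponential of the cocartesian fibration $\mathcal{C}^{\otimes}$ along the flat (indeed cartesian-product) map. The remaining task is to show that the cocartesian pushforward of a family of $\mathcal{V}$-algebras along an active map $\alpha$ is still a family of $\mathcal{V}$-algebras, i.e. that it still preserves inert morphisms in the $\mathcal{V}$-variable. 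This reduces to the fact that the tensor functors $\otimes_\alpha$ of $\mathcal{C}^{\otimes}$ are maps of $\infty$-operads, i.e. that $\otimes$ commutes with inert maps.

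I expect this last compatibility to be the main obstacle. It amounts to the interchange statement that pushforward along active maps in one smash factor commutes with inert maps in the other. It is exactly here that the \emph{symmetric}, rather than merely $\mathcal{O}$-monoidal, structure on $\mathcal{C}^{\otimes}$ is needed, since we require $\mathcal{C}^{\otimes}$ to be a commutative algebra object in operads, i.e. $\mathrm{Comm}\otimes\mathcal{V}\simeq$ a tensor product that still maps to $\mathcal{C}^{\otimes}$. If direct verification proves awkward, I would instead cite that $\mathcal{C}^{\otimes}$ corresponds to a commutative algebra in $\mathrm{Op}_\infty$ under the Boardman--Vogt tensor product, that $\mathrm{Alg}_{\mathcal{V}}(-)$ is lax monoidal for it, and that it hence carries commutative algebras to symmetric monoidal $\infty$-categories.
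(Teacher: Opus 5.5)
Your route is the paper's: its proof of this proposition is just the citation \cite[Example 3.2.4.4]{lurieha}, and you are unpacking the construction behind it (\cite[Construction 3.2.4.1, Proposition 3.2.4.3]{lurieha}). However, there is a genuine error in your reconstruction: the inertness conditions you put into $\mathrm{Fun}'$ are not Lurie's, and as written they produce the wrong object. An inert morphism of $\mathcal{C}^{\otimes}$ must lie over an inert map of $\mathrm{Fin}_\ast$, while the smash product of a non-inert map with an identity is not inert.

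\emph{First condition.} Let $e$ be an edge of $K$ over the active map $\langle 2\rangle\to\langle 1\rangle$ and $v$ a colour of $\mathcal{V}$. Your first condition demands that $(e,\mathrm{id}_v)$, which lies over $\langle 2\rangle\to\langle 1\rangle$, go to an inert morphism. This is impossible, so your $\mathrm{Alg}_{\mathcal{V}}(\mathcal{C})^{\otimes}$ has no edges over active maps, and hence no tensor product. For an edge $e$ over $\mathrm{id}_{\langle 1\rangle}$, the same condition forces every component $(e,\mathrm{id}_v)$ to be inert over an identity, i.e.\ an equivalence. So the fibre over $\langle 1\rangle$ is only the maximal subgroupoid of $\mathrm{Alg}_{\mathcal{V}}(\mathcal{C}^{\otimes})$, and your first step fails.

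\emph{Second condition.} Apply it to a degenerate edge $\mathrm{id}_k$ and a morphism of $\mathcal{V}$ lying over a non-inert map, for instance a binary operation. It is then violated by every vertex $k$ not lying over $\langle 0\rangle$.

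The correct requirement is purely vertexwise in $K$. For every vertex $k$ of $K$ and every inert morphism $\alpha$ of $\mathcal{V}$, the edge $(\mathrm{id}_k,\alpha)$ must go to an inert morphism, and nothing is imposed on edges of $K$. That a morphism of $\mathrm{Alg}_{\mathcal{V}}(\mathcal{C})^{\otimes}$ is inert if and only if its components at the colours of $\mathcal{V}$ are inert is a consequence, proved in \cite[Proposition 3.2.4.3]{lurieha}. It is not part of the definition.

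With this correction the argument goes through, and the step you single out as the main obstacle is in fact short. Let $F$ be an object of the fibre over $s$, $\alpha\colon s\to s'$ a map, and $\beta\colon v\to v'$ an inert morphism of $\mathcal{V}$. The transported edge satisfies $\alpha_!F(\beta)\circ c_v\simeq c_{v'}\circ F(\beta)$, where $c_v,c_{v'}$ are cocartesian lifts of $\alpha\wedge\mathrm{id}$. Since $F(\beta)$ is inert, hence cocartesian, both $c_v$ and $c_{v'}\circ F(\beta)$ are cocartesian. Cocartesian edges satisfy two-out-of-three (if $g\circ f$ and $f$ are cocartesian, so is $g$), so $\alpha_!F(\beta)$ is cocartesian over the inert map $\mathrm{id}_{s'}\wedge q(\beta)$, hence inert. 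The symmetry of $\mathcal{C}^{\otimes}$ enters only through the fact that $\mathcal{C}^{\otimes}\to\mathrm{Fin}_\ast$ has cocartesian lifts over all maps, in particular over $\alpha\wedge\mathrm{id}$. No statement that $\otimes$ is itself symmetric monoidal is needed.

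Your fallback should be dropped. $\mathrm{Alg}_{\mathcal{V}}(-)$ is not lax monoidal from $\otimes_{\mathrm{BV}}$ to the cartesian product, and already the unit map fails: the unit of $\otimes_{\mathrm{BV}}$ is the trivial $\infty$-operad $\mathrm{Triv}^{\otimes}$, and $\mathrm{Alg}_{\mathrm{Comm}}(\mathrm{Triv}^{\otimes})$ is empty.
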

\begin{proof}
\cite[Example 3.2.4.4]{lurieha}.
\end{proof}

\begin{example}\label{exple_bv}
Proposition~\ref{prop_bv} allows us to construct $\infty$-categories of algebras iteratively whenever we start with a symmetric monoidal
$\infty$-category $\mathcal{C}$. That is to say, given $\infty$-operads $\mathcal{O}$ and $\mathcal{V}$, we may construct
the $\infty$-category $\mathrm{Alg}_{\mathcal{O}}(\mathrm{Alg}_{\mathcal{V}}(\mathcal{C}^{\otimes})^{\otimes})$ of $\mathcal{O}$-algebras 
in $\mathcal{V}$-algebras in $\mathcal{C}^{\otimes}$. Moreover, the $\infty$-category of $\infty$-operads comes equipped with a closed 
symmetric monoidal structure itself \cite[Section 2.2.5]{lurieha}. The corresponding tensor product
$\mathcal{O}\otimes_{\mathrm{BV}}\mathcal{V}$ of two $\infty$-operads $\mathcal{O}$ and $\mathcal{V}$ is an $\infty$-operadic version of 
the Boardman-Vogt tensor product. It induces an equivalence
\[\mathrm{Alg}_{\mathcal{O}}(\mathrm{Alg}_{\mathcal{V}}(\mathcal{C}^{\otimes})^{\otimes})\simeq\mathrm{Alg}_{\mathcal{O}\otimes_{\mathrm{BV}}\mathcal{V}}(\mathcal{C}^{\otimes})\]
of symmetric monoidal $\infty$-categories.
\end{example}

\begin{example}
Central to this paper are the little cubes $\infty$-operads $\mathsf{E}_k$ for $0\leq k \leq \infty$. Given a 
symmetric monoidal $\infty$-category $\mathcal{C}^{\otimes}$, the $\infty$-category $\mathrm{Alg}_{\mathsf{E}_1}(\mathcal{C}^{\otimes})$ is 
(essentially) the $\infty$-category of associative and unital monoids in $\mathcal{C}^{\otimes}$. The $\infty$-operad $\mathsf{E}_{\infty}$ 
is given by the $\infty$-category $\mathrm{Fin}_{\ast}$ itself (or rather by the identity to itself). The $\infty$-category
$\mathrm{Alg}_{\mathsf{E}_{\infty}}(\mathcal{C}^{\otimes})$ is the $\infty$-category of commutative, associative and unital monoids in
$\mathcal{C}^{\otimes}$. We will say more about the intermediate integers $1<k<\infty$ below.
\end{example}

\begin{example}\label{exple_cartmon}
Every $\infty$-category $\mathcal{C}$ with finite products gives rise to a cartesian symmetric 
monoidal $\infty$-category $\mathcal{C}^{\times}\twoheadrightarrow\mathrm{Fin}_{\ast}$ \cite[Section 2.4.1]{lurieha}. In particular, there 
is a cartesian monoidal $\infty$-category $\mathrm{Cat}_{\infty}^{\times}$ of $\infty$-categories. Furthermore, every finite product 
preserving functor $F\colon\mathcal{C}\rightarrow\mathcal{D}$ between $\infty$-categories with finite products induces a symmetric monoidal 
functor $F\colon\mathcal{C}^{\times}\rightarrow\mathcal{D}^{\times}$ \cite[Corollary 2.4.1.8]{lurieha}.
\end{example}

\begin{example}
The $\infty$-category $\mathrm{SMonCat}_{\infty}:=\mathrm{Alg}_{\mathsf{E}_{\infty}}(\mathrm{Cat}_{\infty}^{\times})$ of
$\mathsf{E}_{\infty}$-algebras in 
the cartesian monoidal $\infty$-category $\mathrm{Cat}_{\infty}^{\times}$ is (equivalent to) the $\infty$-category of symmetric monoidal
$\infty$-categories. Indeed, its objects are the morphisms of $\infty$-operads from $\mathrm{Fin}_{\ast}$ to
$\mathrm{Cat}_{\infty}^{\times}$ by definition, and so they present symmetric monoidal
$\infty$-categories by way of the $\infty$-categorical Grothendieck construction. One may think of morphisms in $\mathrm{SMonCat}_{\infty}$ 
as the strong symmetric monoidal functors between symmetric monoidal $\infty$-categories. In contrast, general morphisms of
$\infty$-operads between symmetric monoidal $\infty$-categories are the lax symmetric monoidal functors.
\end{example}

\begin{example}\label{exple_moncat}
Similarly, the $\infty$-category $\mathrm{MonCat}_{\infty}:=\mathrm{Alg}_{\mathsf{E}_1}(\mathrm{Cat}_{\infty}^{\times})$ of
$\mathsf{E}_1$-monoids in the
cartesian monoidal $\infty$-category $\mathrm{Cat}_{\infty}^{\times}$ is (equivalent to) the $\infty$-category of monoidal
$\infty$-categories. The morphisms in $\mathrm{MonCat}_{\infty}=\mathrm{Alg}_{\mathsf{E}_1}(\mathrm{Cat}_{\infty}^{\times})$ are again the strong 
monoidal functors between monoidal $\infty$-categories (rather than their lax versions).
To relate the definition of a monoidal $\infty$-category to a notion of monoid in $\mathrm{Cat}_{\infty}$ more familiar to low dimensional 
category theory, we recall that there is an embedding $\Delta^{op}\hookrightarrow\mathrm{Fin}_{\ast}$ by assigning to a natural number 
$n$ its  associated ``cut'' \cite[Section 4.1.2]{lurieha}. Pullback of the cocartesian fibration
$\mathrm{Cat}_{\infty}^{\times}\twoheadrightarrow\mathrm{Fin}_{\ast}$ along this embedding yields a cocartesian fibration
$\mathrm{Cat}_{\infty}^{\times}\twoheadrightarrow\Delta^{op}$. This defines the ``planar'' monoidal $\infty$-category 
associated to $\mathrm{Cat}_{\infty}^{\times}$ by forgetting its symmetry. Now, the $\infty$-category $\mathrm{MonCat}_{\infty}$ is 
(equivalent to) the full sub-$\infty$-category
$\mathrm{Alg}_{\Delta^{op}}(\mathrm{Cat}_{\infty}^{\times})\subseteq\mathrm{Fun}_{\Delta^{op}}(\Delta^{op},\mathrm{Cat}_{\infty})$ spanned 
by the morphisms of planar $\infty$-operads. Again, the latter are defined as those sections
$\Delta^{op}\rightarrow\mathrm{Cat}_{\infty}^{\times}$ that preserve inert morphisms; or in this case, equivalently, as those functors
$\Delta^{op}\rightarrow\mathrm{Cat}_{\infty}$ that map $[0]$ to the terminal $\infty$-category, and that satisfy the Segal conditions. 
These are, essentially,  $\infty$-category objects in $\mathrm{Cat}_{\infty}$ with a single object.
\end{example}

Lastly, we note that if $\mathcal{C}^{\otimes}$ is a (not necessarily symmetric) monoidal $\infty$-category, then the $\infty$-category
$\mathrm{Alg}_{\mathsf{E}_1}(\mathcal{C}^{\otimes})$ of $\mathsf{E}_1$-algebras in $\mathcal{C}^{\otimes}$ can again be defined as the
$\infty$-category of 
morphisms from $\Delta^{op}$ to $\mathcal{C}^{\otimes}$ of planar $\infty$-operads \cite[Section 4.1.3]{lurieha}.\footnote{The notation
$\mathrm{Alg}_{\mathsf{E}_1}(\mathcal{C}^{\otimes})$ is not ideal in the non-symmetric context; the object is in this generality denoted by
$\mathrm{Alg}_{\mathsf{A}_{\infty}}(\mathcal{C}^{\otimes})$ instead. For the purposes of this paper it doesn't seem worth to introduce 
more notation at this point however.} For $k>1$, the $\infty$-category of $\mathsf{E}_k$-algebras in $\mathcal{C}^{\otimes}$ however is not a
well-typed construction if $\mathcal{C}^{\otimes}$ is merely associative and unital itself. Yet, more generally, there is a 
sequence of canonical inclusions
\[\mathsf{E}_1\hookrightarrow \mathsf{E}_2\hookrightarrow\dots\hookrightarrow \mathsf{E}_k\hookrightarrow\dots\hookrightarrow \mathsf{E}_{\infty}\]
which exhibits $\mathsf{E}_{\infty}$ as the sequential colimit of the $\mathsf{E}_k$'s \cite{lurieha}. Each $\mathsf{E}_k$ can hence be 
thought of as a finite approximation to $\mathsf{E}_{\infty}$.
In particular, whenever $\mathcal{C}^{\otimes}$ itself is an $\mathsf{E}_k$-algebra in $\mathrm{Cat}_{\infty}^{\times}$ for some
$1\leq k\leq \infty$, one may similarly consider $\mathcal{C}^{\otimes}$ as an $\infty$-category fibered over the
$\infty$-operad $\mathsf{E}_k$ (instead fibered over $\mathsf{E}_{\infty}=\mathrm{Fin}_{\ast}$, or over $\mathsf{E}_1\simeq\Delta^{op}$\footnote{This equivalence is to be understood as an equivalence in the sense of \cite[Theorem 4.1.3.14]{lurieha}.}). Then for any
$0\leq m	\leq k$ we again may define
$\mathrm{Alg}_{\mathsf{E}_m}(\mathcal{C}^{\otimes})\subseteq\mathrm{Fun}_{\mathsf{E}_k}(\mathsf{E}_m,\mathcal{C}^{\otimes})$ as the full 
sub-$\infty$-category spanned by those functors that preserve inert morphisms.

We end this preliminary section with a reminder of the $\infty$-operadic Additivity Theorem. We therefore refer back to the Boardman--Vogt tensor product of $\infty$-operads discussed in Example~\ref{exple_bv}.

\begin{theorem}[{\cite[Theorem 5.1.2.2]{lurieha}}]\label{thm_add}
Let $k_1,k_2\geq 0$ be integers. Then the canonical inclusions $E_{k_i}\hookrightarrow E_{k_1+k_2}$ induce an equivalence
$E_{k_1}\otimes_{\mathrm{BV}}E_{k_2}\rightarrow E_{k_1+k_2}$ of $\infty$-operads.
\end{theorem}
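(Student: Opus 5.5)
Since this is \cite[Theorem 5.1.2.2]{lurieha}, in the paper it is simply cited. The plan below is how I would reconstruct Lurie's argument, in the spirit of Dunn's classical additivity for little cubes operads.

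\textbf{Step 1: the bifunctor.} I would first build a bifunctor of $\infty$-operads $\mathsf{E}_{k_1}\times\mathsf{E}_{k_2}\rightarrow\mathsf{E}_{k_1+k_2}$ at the level of topological operads. A rectilinear embedding of $n$ open $k_1$-cubes into the standard cube and one of $m$ open $k_2$-cubes determine, by taking products $\square^{k_1}\times\square^{k_2}=\square^{k_1+k_2}$, a rectilinear embedding of $nm$ cubes of dimension $k_1+k_2$. This pairing is compatible with operadic composition and with the symmetric group actions, so it is an operad bifunctor in the sense of \cite[Section 2.2.5]{lurieha}. Passing to operadic nerves gives the bifunctor. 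The canonical inclusions $\mathsf{E}_{k_i}\hookrightarrow\mathsf{E}_{k_1+k_2}$ are recovered by restricting along the unit of the other factor. By the universal property of $\otimes_{\mathrm{BV}}$ this yields the comparison map $\mathsf{E}_{k_1}\otimes_{\mathrm{BV}}\mathsf{E}_{k_2}\rightarrow\mathsf{E}_{k_1+k_2}$.

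\textbf{Step 2: reduce to an operadic criterion.} Both sides are unital and reduced, i.e.\ single coloured with contractible spaces of $0$-ary and $1$-ary operations. The Boardman--Vogt tensor product is defined as a localisation of $\mathsf{E}_{k_1}\times\mathsf{E}_{k_2}$, so there is no direct formula for its multimorphism spaces. I would therefore not compute it. Instead I would prove a recognition criterion: a bifunctor $\mathcal{O}\times\mathcal{O}'\rightarrow\mathcal{O}''$ of unital $\infty$-operads exhibits $\mathcal{O}''$ as $\mathcal{O}\otimes_{\mathrm{BV}}\mathcal{O}'$ as soon as, for each $n$, the space of $n$-ary operations of $\mathcal{O}''$ is the colimit (homotopy quotient) over the category of ``grid decompositions''. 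Such a decomposition is a factorisation of $n$ as a family of operations of $\mathcal{O}$ composed with operations of $\mathcal{O}'$, modulo the interchange relations. This is Lurie's approach via approximations and ``$\mathcal{O}$-monoidal envelopes'', and it is tested against algebras in an arbitrary symmetric monoidal $\infty$-category $\mathcal{C}^{\otimes}$. Concretely, it suffices that restriction induces an equivalence
\[\mathrm{Alg}_{\mathsf{E}_{k_1+k_2}}(\mathcal{C}^{\otimes})\xrightarrow{\simeq}\mathrm{Alg}_{\mathsf{E}_{k_1}}(\mathrm{Alg}_{\mathsf{E}_{k_2}}(\mathcal{C}^{\otimes})^{\otimes}),\]
naturally in $\mathcal{C}^{\otimes}$, using Example~\ref{exple_bv}.

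\textbf{Step 3: the geometric input (expected main obstacle).} The criterion must then be verified, and this requires real geometry. The space $\mathrm{Rect}(\coprod_n\square^{k_1+k_2},\square^{k_1+k_2})$ is homotopy equivalent to the ordered configuration space $\mathrm{Conf}_n(\mathbb{R}^{k_1+k_2})$. One then has to show that the homotopy colimit of the corresponding product configuration data recovers it, i.e.\ that the spaces $\mathrm{Conf}_{n}(\mathbb{R}^{k_1})$ and $\mathrm{Conf}_{m}(\mathbb{R}^{k_2})$, glued along the ways of projecting a configuration in $\mathbb{R}^{k_1+k_2}$ to its first $k_1$ coordinates, assemble to $\mathrm{Conf}_n(\mathbb{R}^{k_1+k_2})$.

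\begin{itemize}
\item I would first stratify $\mathrm{Conf}_n(\mathbb{R}^{k_1}\times\mathbb{R}^{k_2})$ by the partition of the $n$ points according to their $\mathbb{R}^{k_1}$-coordinate.
\item I would then apply a Seifert--van Kampen/hypercover argument (Lurie's \cite[Theorem A.3.1]{lurieha}-type results, or Dunn's original cellular argument) to identify the total space with the homotopy colimit over the poset of such partitions.
\end{itemize}

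Matching this stratification with the combinatorics of the BV tensor product is where I expect the real work to lie. Unitality is essential here, since it allows points to be ``forgotten'' and so makes the relevant indexing categories sifted or weakly contractible. Once this is established, the comparison map is an equivalence on every multimorphism space and essentially surjective on colours, hence an equivalence of $\infty$-operads.
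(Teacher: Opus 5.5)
Your proposal agrees with the paper: the paper gives no argument of its own for this statement but imports it from \cite[Theorem 5.1.2.2]{lurieha}, which is exactly where you start. The reconstruction you add is a fair outline of Lurie's strategy (the bifunctor from products of rectilinear embeddings, the approximation machinery of \cite[Section 2.3.3]{lurieha}, and a contractibility argument on configuration spaces), but it remains an outline. The decisive step, that for each configuration of $(k_1+k_2)$-cubes the category of its product (grid) decompositions is weakly contractible, is precisely what your Step 3 leaves open, so the citation, not the sketch, is what carries the proof.
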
\qed

The Additivity Theorem informally states that to give an $\mathsf{E}_k$-commutative algebra structure on an object $A$ in a symmetric 
monoidal $\infty$-category $\mathcal{C}^{\otimes}$ one may equivalently give $k$-many compatible associative algebra structures on $A$ in
$\mathcal{C}^{\otimes}$.

\subsection*{Monoidality of the $\infty$-category of $\mathsf{E}_k$-algebras}

To motivate Theorem~\ref{thm_main_12}, in this section we discuss a straightforward $\infty$-categorical version thereof 
(Corollary~\ref{corinftymain1}). In essence, it only uses the fact that the functor which assigns to an $\mathsf{E}_k$-monoidal
$\infty$-category $\mathcal{C}^{\otimes}$ the $\infty$-category $\mathrm{Alg}_{\mathsf{E}_m}(\mathcal{C}^{\otimes})$ of
$\mathsf{E}_m$-algebras therein is itself monoidal (for $0\leq m\leq k$).

Therefore, we recall that for any integer $0\leq k\leq\infty$ the $\infty$-operad $\mathsf{E}_k$ has a single object $u$. This object 
induces a morphism $\{u\}\colon\{\ast\}\rightarrow \mathsf{E}_k$ of simplicial sets over $\mathrm{Fin_{\ast}}$. Precomposition with $\{u\}$ 
induces a forgetful functor 
\begin{align}\label{equ_forgetful}
U\colon\mathrm{Alg}_{\mathsf{E}_k}(\mathcal{C}^{\otimes})\rightarrow\mathcal{C}
\end{align}
for any symmetric monoidal $\infty$-category $\mathcal{C}^{\otimes}$.

\begin{lemma}\label{lemma_algfinprod}
Let $\mathcal{C}^{\times}$ be a cartesian monoidal $\infty$-category, and let $0\leq k\leq\infty$ be an integer.
Then the $\infty$-category $\mathrm{Alg}_{\mathsf{E}_k}(\mathcal{C}^{\times})$ has again finite products, and the forgetful functor
\[\mathrm{Alg}_{\mathsf{E}_k}(\mathcal{C}^{\times})^{\times}\rightarrow\mathcal{C}^{\times}\]
is symmetric monoidal. In particular, this applies to
$\mathrm{MonCat}_{\infty}:=\mathrm{Alg}_{\mathsf{E}_1}(\mathrm{Cat}_{\infty}^{\times})$.
\end{lemma}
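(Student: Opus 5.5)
The plan is to show that the forgetful functor $U\colon\mathrm{Alg}_{\mathsf{E}_k}(\mathcal{C}^{\times})\rightarrow\mathcal{C}$ preserves and reflects finite products. By Example~\ref{exple_cartmon}, a finite product preserving functor between $\infty$-categories with finite products induces a symmetric monoidal functor of the associated cartesian monoidal $\infty$-categories, and this gives the second claim. Lurie's general machinery already says that limits in $\mathrm{Alg}_{\mathcal{O}}(\mathcal{C}^{\otimes})$ are computed underlying whenever $\mathcal{C}$ has them and they are compatible with the tensor product \cite[Proposition 3.2.2.1]{lurieha}. I would invoke this directly, but I would also check the hypotheses concretely in the cartesian case.

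First, the terminal object. The constant section $\mathsf{E}_k\rightarrow\mathcal{C}^{\times}$ at the terminal object $\ast$ preserves inert morphisms, because $\ast\times\dots\times\ast\simeq\ast$. Since $\mathrm{Fun}_{\mathrm{Fin}_{\ast}}(\mathsf{E}_k,\mathcal{C}^{\times})$ has the fiberwise terminal sections as terminal objects, this constant algebra is terminal in the full subcategory $\mathrm{Alg}_{\mathsf{E}_k}(\mathcal{C}^{\times})$.

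Second, binary products. Given algebras $A,B$, I would take the pointwise product section $A\times B$ in $\mathrm{Fun}_{\mathrm{Fin}_{\ast}}(\mathsf{E}_k,\mathcal{C}^{\times})$. The fibers $\mathcal{C}^{\times}([n])\simeq\mathcal{C}^n$ have products, and the cocartesian pushforward functors $\mathcal{C}^n\rightarrow\mathcal{C}^m$ are products of projections and finite products, so they preserve products. Hence fiberwise products assemble to relative products over $\mathrm{Fin}_{\ast}$, in the sense of \cite[Section 4.3.1]{lurieha}, and so give products in the section category. Inert morphisms in $\mathcal{C}^{\times}$ are cocartesian over inert maps, that is, tuples of projections, and these commute with products. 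So $A\times B$ again preserves inert morphisms, and being a product in the ambient category it is one in the full subcategory. Evaluating at $u$ shows that $U(A\times B)\simeq U(A)\times U(B)$ and that $U$ preserves the terminal object, so $U$ preserves finite products.

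The main obstacle is the claim that fiberwise products over a cocartesian fibration yield products in the $\infty$-category of sections. This needs the relative limit criterion, namely that products are preserved by the transition functors, which is exactly where cartesianness (product preserves product) is used. I would cite \cite[Proposition 3.2.2.1]{lurieha} rather than prove it by hand. Finally, the case $\mathrm{MonCat}_{\infty}$ is the instance $\mathcal{C}=\mathrm{Cat}_{\infty}$ and $k=1$.
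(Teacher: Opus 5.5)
Your proposal is correct and is essentially the paper's argument. The paper cites that the forgetful functor creates all limits existing in $\mathcal{C}$ \cite[Corollary 3.2.2.5]{lurieha} and then applies Example~\ref{exple_cartmon}; your hands-on check of terminal objects and binary products just unpacks that citation. One small correction: for limits of algebras Lurie needs no compatibility with the tensor product, since fiberwise limits in a cocartesian fibration are automatically relative limits (preservation by the transition functors is the condition that matters for colimits), so the step you flag as the main obstacle does not actually use that $\mathcal{C}^{\times}$ is cartesian.
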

\begin{proof}
The forgetful functor $U\colon\mathrm{Alg}_{\mathsf{E}_k}(\mathcal{C}^{\times})\rightarrow\mathcal{C}$ creates all limits that exist in
$\mathcal{C}$ \cite[Corollary 3.2.2.5]{lurieha}. In particular, as $\mathcal{C}$ has finite products by assumption, this proves the 
statement (Example~\ref{exple_cartmon}).

\end{proof}


\begin{proposition}\label{thminftymain}
Let $0\leq k\leq \infty$. The functor
$\mathrm{Alg}_{\mathsf{E}_k}(-)\colon\mathrm{Alg}_{\mathsf{E}_k}(\mathrm{Cat}_{\infty})\rightarrow\mathrm{Cat}_{\infty}$ preserves finite 
products and hence gives rise to a symmetric monoidal functor
\[\mathrm{Alg}_{\mathsf{E}_k}(-)\colon\mathrm{Alg}_{\mathsf{E}_k}(\mathrm{Cat}_{\infty})^{\times}\rightarrow\mathrm{Cat}_{\infty}^{\times}.\]
\end{proposition}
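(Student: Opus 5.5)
We have to show that the functor $\mathrm{Alg}_{\mathsf{E}_k}(-)$ preserves finite products, i.e.\ that it sends the terminal object to the terminal $\infty$-category and binary products to binary products. Once this is known, Example~\ref{exple_cartmon} (i.e.\ \cite[Corollary 2.4.1.8]{lurieha}) upgrades the functor to a symmetric monoidal functor between the cartesian monoidal structures. The source has finite products by Lemma~\ref{lemma_algfinprod}.

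First I would make the functor precise. An object of $\mathrm{Alg}_{\mathsf{E}_k}(\mathrm{Cat}_{\infty}^{\times})$ is a morphism of $\infty$-operads $\mathsf{E}_k\rightarrow\mathrm{Cat}_{\infty}^{\times}$. Unstraightening over $\mathsf{E}_k$ turns this into a cocartesian fibration of $\infty$-operads $\mathcal{C}^{\otimes}\rightarrow\mathsf{E}_k$, and this assignment is functorial in strong morphisms. Then $\mathrm{Alg}_{\mathsf{E}_k}(\mathcal{C}^{\otimes})$ is the full subcategory of the section $\infty$-category $\mathrm{Fun}_{\mathsf{E}_k}(\mathsf{E}_k,\mathcal{C}^{\otimes})$ on inert-preserving sections. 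Covariant functoriality in $\mathcal{C}^{\otimes}$ is given by postcomposition, which preserves inert morphisms because morphisms of $\mathsf{E}_k$-monoidal $\infty$-categories preserve cocartesian lifts of inert maps.

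Next I would check products. By Lemma~\ref{lemma_algfinprod}, products in $\mathrm{Alg}_{\mathsf{E}_k}(\mathrm{Cat}_{\infty}^{\times})$ are computed underlying, hence fiberwise over $\mathsf{E}_k$. So the unstraightening of $\mathcal{C}\times\mathcal{D}$ is the fiber product $\mathcal{C}^{\otimes}\times_{\mathsf{E}_k}\mathcal{D}^{\otimes}$, since unstraightening takes pointwise products to fiber products over the base. Sections of a fiber product are pairs of sections:
\[\mathrm{Fun}_{\mathsf{E}_k}(\mathsf{E}_k,\mathcal{C}^{\otimes}\times_{\mathsf{E}_k}\mathcal{D}^{\otimes})\simeq\mathrm{Fun}_{\mathsf{E}_k}(\mathsf{E}_k,\mathcal{C}^{\otimes})\times\mathrm{Fun}_{\mathsf{E}_k}(\mathsf{E}_k,\mathcal{D}^{\otimes}).\]
A morphism of the fiber product is cocartesian over an inert map exactly when both components are. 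Hence the inert-preserving sections correspond to pairs of inert-preserving sections, giving $\mathrm{Alg}_{\mathsf{E}_k}(\mathcal{C}\times\mathcal{D})\simeq\mathrm{Alg}_{\mathsf{E}_k}(\mathcal{C})\times\mathrm{Alg}_{\mathsf{E}_k}(\mathcal{D})$. For the terminal object, the unit $\ast$ unstraightens to the identity $\mathsf{E}_k\rightarrow\mathsf{E}_k$, whose section $\infty$-category is contractible.

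One must also verify that the canonical comparison map induced by the projections is the equivalence above, rather than just some abstract equivalence. This holds because the projections of $\mathcal{C}\times\mathcal{D}$ unstraighten to the projections of the fiber product.

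An alternative formulation avoids this bookkeeping. Write $\mathrm{Alg}_{\mathsf{E}_k}(\mathcal{C})$ as a mapping object: $\mathrm{Alg}_{\mathsf{E}_k}(\mathcal{C}^{\otimes})$ is a limit-type construction in $\mathcal{C}^{\otimes}$, namely a full subcategory of a right adjoint (pullback followed by sections) defined by a condition checked componentwise. Right adjoints preserve products.

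I expect the main obstacle to be the coherent functoriality, i.e.\ constructing $\mathrm{Alg}_{\mathsf{E}_k}(-)$ as an honest functor of $\infty$-categories, rather than the product computation itself. The cleanest route is to realize it as the composite of unstraightening, $\mathrm{Alg}_{\mathsf{E}_k}(\mathrm{Cat}_{\infty}^{\times})\simeq$ the $\infty$-category of $\mathsf{E}_k$-monoidal $\infty$-categories (cocartesian fibrations over $\mathsf{E}_k$), followed by the functor $\mathrm{Fun}_{\mathsf{E}_k}(\mathsf{E}_k,-)$ restricted to inert-preserving sections, as in \cite[Section 2.1.3]{lurieha}.
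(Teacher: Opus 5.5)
Your proposal is correct and follows the paper's proof essentially step for step. The shared argument identifies the product of two $\mathsf{E}_k$-monoidal $\infty$-categories with the fiber product $\mathcal{C}^{\otimes}\times_{\mathsf{E}_k}\mathcal{D}^{\otimes}$, uses that the global sections functor $\mathrm{Fun}_{\mathsf{E}_k}(\mathsf{E}_k,-)$ turns this fiber product into a product, and notes that a morphism of the fiber product is inert exactly when both components are, so inert-preserving sections are pairs of inert-preserving sections; your extra remarks on the terminal object, the canonical comparison map and coherent functoriality via unstraightening only make explicit what the paper leaves implicit, and the upgrade to a symmetric monoidal functor is the same appeal to Example~\ref{exple_cartmon}.
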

\begin{proof}
The product of two $\mathsf{E}_k$-monoidal $\infty$-categories $\mathcal{C}^{\otimes}\twoheadrightarrow \mathsf{E}_k$,
$\mathcal{D}^{\otimes}\twoheadrightarrow \mathsf{E}_k$ is given by their fiber product
$\mathcal{C}^{\otimes}\times_{\mathsf{E}_k}\mathcal{D}^{\otimes}\twoheadrightarrow \mathsf{E}_k$. The functor
$\mathrm{Alg}_{\mathsf{E}_k}(-)$ is by definition the subfunctor of the global sections functor
$\mathrm{Fun}_{\mathsf{E}_k}(\mathsf{E}_k,-)$ that is pointwise spanned by those maps over $\mathsf{E}_k$ which preserve 
inert morphisms. The global sections functor $\mathrm{Fun}_{\mathsf{E}_k}(\mathsf{E}_k,-)$ preserves products (i.e.\ it maps pullbacks over 
$\mathsf{E}_k$ to products of hom-$\infty$-categories). A morphism in $\mathcal{C}^{\otimes}\times_{\mathsf{E}_k}\mathcal{D}^{\otimes}$ is 
inert if and only if its two projections in $\mathcal{C}^{\otimes}$ and $\mathcal{D}^{\otimes}$ are both inert. It follows that
$\mathrm{Alg}_{\mathsf{E}_k}(-)$ preserves finite products as well.
\end{proof}



\begin{corollary}\label{corinftymain1}
Let $0\leq k\leq\infty$ and suppose $\mathcal{C}^{\otimes}$ is an $\mathsf{E}_k$-monoidal $\infty$-category. Then for all $0\leq m\leq k$, 
the $\infty$-category $\mathrm{Alg}_{\mathsf{E}_m}(\mathcal{C}^{\otimes})$ of $\mathsf{E}_m$-monoids in $\mathcal{C}^{\otimes}$ is an
$E_{k-m}$-monoidal $\infty$-category.\footnote{For $k=\infty$, $0\leq m\leq \infty$, we again set $k - m =\infty$.}
\end{corollary}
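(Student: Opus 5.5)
We combine Proposition~\ref{thminftymain} with the Additivity Theorem (Theorem~\ref{thm_add}) and the iterated-algebra equivalence of Example~\ref{exple_bv}. First we treat finite $k$, with $0\leq m\leq k$, and write $k=m+(k-m)$. By Example~\ref{exple_bv} and Theorem~\ref{thm_add}, applied to the cartesian symmetric monoidal $\infty$-category $\mathrm{Cat}_{\infty}^{\times}$, we obtain an equivalence
\[\mathrm{Alg}_{\mathsf{E}_k}(\mathrm{Cat}_{\infty}^{\times})\simeq\mathrm{Alg}_{\mathsf{E}_{k-m}\otimes_{\mathrm{BV}}\mathsf{E}_m}(\mathrm{Cat}_{\infty}^{\times})\simeq\mathrm{Alg}_{\mathsf{E}_{k-m}}\bigl(\mathrm{Alg}_{\mathsf{E}_m}(\mathrm{Cat}_{\infty}^{\times})^{\times}\bigr).\]
Here the symmetric monoidal structure on $\mathrm{Alg}_{\mathsf{E}_m}(\mathrm{Cat}_{\infty}^{\times})$ from Proposition~\ref{prop_bv} is the cartesian one, by Lemma~\ref{lemma_algfinprod}. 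So $\mathcal{C}^{\otimes}$ is equivalently an $\mathsf{E}_{k-m}$-algebra in $\mathrm{MonCat}$-type objects $\mathrm{Alg}_{\mathsf{E}_m}(\mathrm{Cat}_{\infty})^{\times}$: it is an $\mathsf{E}_m$-monoidal $\infty$-category together with a compatible $\mathsf{E}_{k-m}$-structure.

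Next, Proposition~\ref{thminftymain} (applied with $m$ in place of $k$) gives a symmetric monoidal functor
\[\mathrm{Alg}_{\mathsf{E}_m}(-)\colon\mathrm{Alg}_{\mathsf{E}_m}(\mathrm{Cat}_{\infty})^{\times}\rightarrow\mathrm{Cat}_{\infty}^{\times}.\]
Symmetric monoidal functors, and more generally maps of $\infty$-operads, induce functors on $\infty$-categories of algebras by postcomposition. We thus get a functor
\[\mathrm{Alg}_{\mathsf{E}_{k-m}}\bigl(\mathrm{Alg}_{\mathsf{E}_m}(\mathrm{Cat}_{\infty})^{\times}\bigr)\rightarrow\mathrm{Alg}_{\mathsf{E}_{k-m}}(\mathrm{Cat}_{\infty}^{\times}).\]
Composing with the equivalence above sends $\mathcal{C}^{\otimes}$ to an $\mathsf{E}_{k-m}$-monoid whose underlying $\infty$-category is $\mathrm{Alg}_{\mathsf{E}_m}(\mathcal{C}^{\otimes})$. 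This is the claimed $\mathsf{E}_{k-m}$-monoidal structure.

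The main obstacle is a compatibility check. The functor $\mathrm{Alg}_{\mathsf{E}_m}(-)$ above takes as input an $\mathsf{E}_m$-monoidal $\infty$-category, meaning a fibration over $\mathsf{E}_m$. The statement, however, defines $\mathrm{Alg}_{\mathsf{E}_m}(\mathcal{C}^{\otimes})$ as inert-preserving functors $\mathsf{E}_m\rightarrow\mathcal{C}^{\otimes}$ over $\mathsf{E}_k$. We must check that the underlying object of the restriction of $\mathcal{C}^{\otimes}$ along the forgetful map $\mathrm{Alg}_{\mathsf{E}_k}\rightarrow\mathrm{Alg}_{\mathsf{E}_m}$ is the pullback $\mathcal{C}^{\otimes}\times_{\mathsf{E}_k}\mathsf{E}_m$. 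This forgetful map is induced by $\mathsf{E}_m\hookrightarrow\mathsf{E}_k$, which is the component of the additivity equivalence, and it corresponds under straightening to pulling back fibrations. Given this, $\mathrm{Fun}_{\mathsf{E}_k}(\mathsf{E}_m,\mathcal{C}^{\otimes})\simeq\mathrm{Fun}_{\mathsf{E}_m}(\mathsf{E}_m,\mathcal{C}^{\otimes}\times_{\mathsf{E}_k}\mathsf{E}_m)$, and the inert conditions agree, so the two definitions coincide. We take this identification, and the naturality of the additivity equivalence in its restriction to the $\mathsf{E}_m$-factor, from \cite{lurieha}.

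Finally, the case $k=\infty$ (so $k-m=\infty$) needs separate treatment. If $m<\infty$, we use $\mathsf{E}_{\infty}\otimes_{\mathrm{BV}}\mathsf{E}_m\simeq\mathsf{E}_{\infty}$, which holds since $\mathsf{E}_{\infty}$ is the unit-absorbing colimit of the $\mathsf{E}_j$ and $\otimes_{\mathrm{BV}}$ commutes with sequential colimits. Equivalently, $\mathrm{Alg}_{\mathsf{E}_{\infty}}$ of a symmetric monoidal category is symmetric monoidal by Proposition~\ref{prop_bv}. If $m=\infty$, we use $\mathsf{E}_{\infty}\otimes_{\mathrm{BV}}\mathsf{E}_{\infty}\simeq\mathsf{E}_{\infty}$ and run the same argument.
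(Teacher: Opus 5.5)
Your proposal is correct and follows the paper's proof. For finite $k$, the paper likewise uses the Additivity Theorem to regard $\mathcal{C}^{\otimes}$ as an $\mathsf{E}_{k-m}$-algebra in $\mathrm{Alg}_{\mathsf{E}_m}(\mathrm{Cat}_{\infty}^{\times})^{\times}$ and postcomposes with the symmetric monoidal functor $\mathrm{Alg}_{\mathsf{E}_m}(-)$ of Proposition~\ref{thminftymain}; for $k=\infty$ it simply invokes Proposition~\ref{prop_bv}, which is your ``equivalently'' route. Your extra check that restriction along $\mathsf{E}_m\hookrightarrow\mathsf{E}_k$ corresponds to pulling back the fibration, so that the underlying $\infty$-category really is $\mathrm{Alg}_{\mathsf{E}_m}(\mathcal{C}^{\otimes})$ as defined over $\mathsf{E}_k$, is a detail the paper leaves implicit.
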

\begin{proof}
Let $0\leq m\leq k<\infty$. By Proposition~\ref{thminftymain}, the functor
\[\mathrm{Alg}_{\mathsf{E}_m}(-)\colon\mathrm{Alg}_{\mathsf{E}_m}(\mathrm{Cat}_{\infty}^{\times})^{\times}\rightarrow\mathrm{Cat}_{\infty}^{\times}\]
is symmetric monoidal. Let $\mathcal{C}^{\otimes}$ be an $\mathsf{E}_k$-monoidal $\infty$-category. By way of the Additivity Theorem, we 
may consider $\mathcal{C}^{\otimes}$ as an $\mathsf{E}_{k-m}$-algebra in
$\mathrm{Alg}_{\mathsf{E}_m}(\mathrm{Cat}_{\infty}^{\times})^{\times}$. The composition
\[\mathsf{E}_{k-m}\xrightarrow{\mathcal{C}^{\otimes}}\mathrm{Alg}_{\mathsf{E}_m}(\mathrm{Cat}_{\infty}^{\times})^{\times}\xrightarrow{\mathrm{Alg}_{\mathsf{E}_m}(-)}\mathrm{Cat}_{\infty}^{\times}\]
of morphisms of $\infty$-operads hence defines an $\mathsf{E}_{k-m}$-monoidal structure on
$\mathrm{Alg}_{\mathsf{E}_m}(\mathcal{C}^{\otimes})$. For $k=\infty$ and any $0\leq m\leq\infty$, the statement follows directly from 
Proposition~\ref{prop_bv}.

%
\end{proof}

Let $U\colon\mathrm{MonCat}_{\infty}^{\times}\rightarrow\mathrm{Cat}_{\infty}^{\times}$ denote the forgetful functor that takes a 
monoidal $\infty$-category $\mathcal{C}^{\otimes}$ to its underlying $\infty$-category $\mathcal{C}$ (Lemma~\ref{lemma_algfinprod}). 
Then the forgetful functors (\ref{equ_forgetful}) can more generally be defined for (not necessarily symmetric) monoidal
$\infty$-categories $\mathcal{C}^{\otimes}$ whenever $k=1$. These give rise to a natural transformation
\begin{align*}
\pi\colon\mathrm{Alg}_{\mathsf{E}_1}(-)\rightarrow U
\end{align*}
of underlying functors given by evaluation at the object $u\in \mathsf{E}_1$. At a monoidal $\infty$-category $\mathcal{C}^{\otimes}$ it 
maps an $\mathsf{E}_1$-algebra in $\mathcal{C}^{\otimes}$ to its underlying base object. Analogously, there is a natural forgetful functor
\begin{align}\label{equ_forgetinfty}
\pi\colon\mathrm{Alg}_{\mathsf{E}_m}(-)\rightarrow U
\end{align}
from $\mathrm{Alg}_{\mathsf{E}_m}(-)\colon\mathrm{Alg}_{\mathsf{E}_k}(\mathrm{Cat}_{\infty}^{\times})\rightarrow\mathrm{Cat}_{\infty}$ to 
$U\colon\mathrm{Alg}_{\mathsf{E}_k}(\mathrm{Cat}_{\infty}^{\times})\rightarrow\mathrm{Cat}_{\infty}$ for all  $0\leq m\leq k$. 

\begin{corollary}\label{corinftymain2}
Let $0\leq k\leq\infty$ and suppose $\mathcal{C}^{\otimes}$ is an $\mathsf{E}_k$-monoidal $\infty$-category. Then for all $0\leq m\leq k$ 
the forgetful functor $\pi_{\mathcal{C}^{\otimes}}\colon\mathrm{Alg}_{\mathsf{E}_m}(\mathcal{C}^{\otimes})\rightarrow\mathcal{C}$ is
$\mathsf{E}_{k-m}$-monoidal with respect to the induced $\mathsf{E}_{k-m}$-algebra structure on
$\mathrm{Alg}_{\mathsf{E}_m}(\mathcal{C}^{\otimes})$ from Corollary~\ref{corinftymain1}.
\end{corollary}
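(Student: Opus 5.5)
The plan is to upgrade the natural transformation $\pi\colon\mathrm{Alg}_{\mathsf{E}_m}(-)\rightarrow U$ from (\ref{equ_forgetinfty}) to a \emph{monoidal} natural transformation between symmetric monoidal functors
\[\mathrm{Alg}_{\mathsf{E}_m}(-),\,U\colon\mathrm{Alg}_{\mathsf{E}_m}(\mathrm{Cat}_{\infty}^{\times})^{\times}\rightarrow\mathrm{Cat}_{\infty}^{\times}.\]
Then we whisker it with the $\mathsf{E}_{k-m}$-algebra $\mathcal{C}^{\otimes}\colon\mathsf{E}_{k-m}\rightarrow\mathrm{Alg}_{\mathsf{E}_m}(\mathrm{Cat}_{\infty}^{\times})^{\times}$ provided by the Additivity Theorem~\ref{thm_add}, exactly as in the proof of Corollary~\ref{corinftymain1}.

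The key observation is that both functors preserve finite products. For $\mathrm{Alg}_{\mathsf{E}_m}(-)$ this is Proposition~\ref{thminftymain}. For $U$ it is Lemma~\ref{lemma_algfinprod}, or directly: $U$ is evaluation at $u$, i.e.\ the fiber over $\langle 1\rangle$, and fibers of the fiber product $\mathcal{C}^{\otimes}\times_{\mathsf{E}_m}\mathcal{D}^{\otimes}$ are products of fibers. By \cite[Corollary 2.4.1.8]{lurieha} (Example~\ref{exple_cartmon}), each therefore extends essentially uniquely to a symmetric monoidal functor between the cartesian structures.

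The main step is to see that $\pi$ itself lifts. I would use that the cartesian structure is functorial in a strong sense: for $\infty$-categories $\mathcal{A},\mathcal{B}$ with finite products, restriction along $\mathcal{B}^{\times}\rightarrow\mathcal{B}$ induces an equivalence from $\mathrm{Fun}^{\otimes}(\mathcal{A}^{\times},\mathcal{B}^{\times})$ to the full subcategory $\mathrm{Fun}^{\times}(\mathcal{A},\mathcal{B})$ of product-preserving functors \cite[Section 2.4.1]{lurieha}. Since $\pi$ is a morphism in $\mathrm{Fun}^{\times}(\mathrm{Alg}_{\mathsf{E}_m}(\mathrm{Cat}_{\infty}^{\times}),\mathrm{Cat}_{\infty})$, it lifts uniquely to a symmetric monoidal natural transformation. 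Concretely, $\pi$ is induced by restriction along $\{u\}\colon\{\ast\}\rightarrow\mathsf{E}_m$. Compatibility with products is visible because restriction of sections commutes with fiber products over $\mathsf{E}_m$. Making this precise at the level of cocartesian fibrations is the main obstacle; I would simply invoke the equivalence above. Equivalently, I would regard $\pi$ as a product-preserving functor into $\mathrm{Fun}(\Delta^1,\mathrm{Cat}_{\infty})$, which has products computed pointwise, and apply Corollary 2.4.1.8 once more.

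Finally, the symmetric monoidal transformation is a morphism in $\mathrm{Alg}_{\mathsf{E}_{k-m}}$-valued functor categories after precomposition. Whiskering with $\mathcal{C}^{\otimes}$ yields a morphism of $\mathsf{E}_{k-m}$-algebras in $\mathrm{Cat}_{\infty}^{\times}$ from $\mathrm{Alg}_{\mathsf{E}_m}(\mathcal{C}^{\otimes})$, with the structure of Corollary~\ref{corinftymain1}, to $U\circ\mathcal{C}^{\otimes}$. We must check that the latter is $\mathcal{C}$ with its restricted $\mathsf{E}_{k-m}$-structure along $\mathsf{E}_{k-m}\hookrightarrow\mathsf{E}_k$. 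This holds because under Additivity, forgetting the $\mathsf{E}_m$-factor corresponds to restriction along $\mathsf{E}_{k-m}\rightarrow\mathsf{E}_m\otimes_{\mathrm{BV}}\mathsf{E}_{k-m}\simeq\mathsf{E}_k$. Its value at $u$ is $\pi_{\mathcal{C}^{\otimes}}$, and morphisms in $\mathrm{Alg}_{\mathsf{E}_{k-m}}(\mathrm{Cat}_{\infty}^{\times})$ are strong $\mathsf{E}_{k-m}$-monoidal functors (Example~\ref{exple_moncat}), which gives the claim. The case $k=\infty$ is handled identically using Proposition~\ref{prop_bv}.
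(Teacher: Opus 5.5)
Your proposal is correct and follows the paper's proof essentially step by step. Both functors preserve finite products, so $\pi$ lifts to a symmetric monoidal transformation by \cite[Corollary 2.4.1.8]{lurieha}. It is then whiskered with $\mathcal{C}^{\otimes}$, viewed via Additivity as an $\mathsf{E}_{k-m}$-algebra in $\mathrm{Alg}_{\mathsf{E}_m}(\mathrm{Cat}_{\infty}^{\times})^{\times}$; since $\mathrm{Alg}_{\mathsf{E}_{k-m}}$ is a full subcategory, the result is a morphism of $\mathsf{E}_{k-m}$-algebras, and the bottom composite is identified with the restriction along $\mathsf{E}_{k-m}\rightarrow\mathsf{E}_k$. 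Your justification of that last identification via the Boardman--Vogt inclusion makes explicit a point the paper only asserts.
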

\begin{proof}
Let $0\leq k\leq\infty$. The functors
$\mathrm{Alg}_{\mathsf{E}_m}(-)\colon\mathrm{Alg}_{\mathsf{E}_m}(\mathrm{Cat}_{\infty}^{\times})\rightarrow\mathrm{Cat}_{\infty}$ and 
$U\colon\mathrm{Alg}_{\mathsf{E}_m}(\mathrm{Cat}_{\infty}^{\times})\rightarrow\mathrm{Cat}_{\infty}$ are both finite product preserving.
The natural transformation (\ref{equ_forgetinfty}) hence lifts to a 2-cell 
\[\xymatrix{\mathrm{Alg}_{\mathsf{E}_m}(\mathrm{Cat}_{\infty}^{\times})^{\times}\ar@/^1pc/[r]^{\mathrm{Alg}_{\mathsf{E}_m}(-)}_{}\ar@/_1pc/[r]_{U}\ar@{}[r]|{\hspace{1pc}\Downarrow\pi} & \mathrm{Cat}_{\infty}^{\times}}\]
of symmetric monoidal functors over $\mathrm{Fin}_{\ast}$ by \cite[Corollary 2.4.1.8]{lurieha}. 
Let $\mathcal{C}^{\otimes}$ be an $\mathsf{E}_k$-monoidal $\infty$-category. As in the proof of Corollary~\ref{corinftymain1}, we
consider $\mathcal{C}^{\otimes}$ as an $\mathsf{E}_{k-m}$-algebra in
$\mathrm{Alg}_{\mathsf{E}_m}(\mathrm{Cat}_{\infty}^{\times})^{\times}$.
We recall that $\mathrm{Alg}_{\mathsf{E}_{k-m}}(-)\subseteq\mathrm{Fun}_{\mathrm{Fin}_{\ast}}(\mathsf{E}_{k-m},-)$ is defined to 
be pointwise a full subcategory. Thus, the $\mathsf{E}_{k-m}$-algebra $\mathcal{C}^{\otimes}$ gives rise to the 2-cell
\begin{align}\label{diag_corinftymain2}
\xymatrix{
\mathsf{E}_{k-m}\ar[r]^(.4){\mathcal{C}^{\otimes}} & \mathrm{Alg}_{\mathsf{E}_m}(\mathrm{Cat}_{\infty}^{\times})^{\times}\ar@/^1pc/[r]^{\mathrm{Alg}_{\mathsf{E}_m}(-)}_{}\ar@/_1pc/[r]_{U}\ar@{}[r]|{\hspace{1pc}\Downarrow\pi} & \mathrm{Cat}_{\infty}^{\times}}
\end{align}
of $\infty$-operads simply by way of precomposition. The top composition is precisely the $\mathsf{E}_{k-m}$-monoidal structure on
$\mathrm{Alg}_{\mathsf{E}_m}(\mathcal{C}^{\otimes})$ from Corollary~\ref{corinftymain1}. The bottom composition is equivalent to the 
canonical restriction
\[\mathsf{E}_{k-m}\rightarrow \mathsf{E}_k\xrightarrow{\mathcal{C}^{\otimes}}\mathrm{Cat}_{\infty}^{\times}\]
of morphisms of $\infty$-operads. The diagram (\ref{diag_corinftymain2}) hence defines an $\mathsf{E}_{k-m}$-monoidal structure on $\pi$ as 
stated. For $n=\infty$ and any $0\leq m\leq\infty$, the statement follows similarly. This case is in fact already explicitly captured by
\cite[Example 3.2.4.4]{lurieha}. 
\end{proof}

\begin{remark}
The proof of Corollary~\ref{corinftymain2} makes crucial use of the fact that $\mathrm{Cat}_{\infty}^{\times}$ is
cartesian. Indeed, given two symmetric monoidal functors $F,G\colon\mathcal{C}^{\otimes}\rightarrow\mathcal{D}{^\otimes}$ between 
general symmetric monoidal $\infty$-categories, it is not necessarily true that a 
natural transformation $\pi\colon F\rightarrow G$ between underlying functors automatically lifts to a symmetric 
monoidal natural transformation from $F$ to $G$ themselves. This is true however whenever $\mathcal{C}^{\otimes}$ and
$\mathcal{D}^{\otimes}$ are cartesian, because monoidality in this case is ensured by way of the universal property of finite products. 
\end{remark}

Informally, Corollary~\ref{corinftymain2} says that the monoidal structure on the $\infty$-category of $\mathsf{E}_m$-monoids in an
$\mathsf{E}_{k+m}$-monoidal $\infty$-category is directly induced from the pointwise monoidal structure on its underlying objects.
For Theorem~\ref{thm_main_12} we will not need this statement per se but rather a bicategorical version thereof. It will be proven 
in Corollary~\ref{corpsmonmonoidal} separately, however its proof is completely analogous.

\section{Monoidal categories and monoids therein}\label{sec_1cat}

To deduce Theorem~\ref{thm_main_12} directly from Corollary~\ref{corinftymain1}, we would want to embed the relevant fragment of the 
theory of monoidal bicategories in the theory of monoidal $\infty$-categories. In the special case of monoidal 1-categories, as to be 
illustrated in this section, this is fairly straightforward.

Therefore, we recall that the standard nerve functor $N\colon\mathrm{Cat}\rightarrow\mathrm{Set}^{\Delta^{op}}$ from the category of small 
categories into the category of simplicial sets
is the Quillen right adjoint of a homotopy localization with respect to the canonical model structure 
on $\mathrm{Cat}$ on the one hand, and the Joyal model structure for $\infty$-categories on $\mathrm{Set}^{\Delta^{op}}$ on the other hand. 
In particular, it induces a fully faithful and limit preserving functor
\begin{align}\label{equ_defN}
N\colon\mathrm{Ho}_{\infty}(\mathrm{Cat})\rightarrow\mathrm{Cat}_{\infty}
\end{align}
from the (locally groupoidal) $2$-category of small categories to the $\infty$-category of small $\infty$-categories.
As the $\infty$-categories $\mathrm{Ho}_{\infty}(\mathrm{Cat})$ and $\mathrm{Cat}_{\infty}$ are complete, we may consider both of
them as cartesian monoidal $\infty$-categories by Example~\ref{exple_cartmon}. We directly obtain the following ordinary categorical 
version of Theorem~\ref{thm_main_12}.

\begin{proposition}\label{prop_main1cat}
Let $\mathcal{C}^{\otimes}$ be an $\mathsf{E}_k$-algebra in the cartesian monoidal $\infty$-category
$\mathrm{Ho}_{\infty}(\mathrm{Cat})^\times$ of 
categories. Let $0\leq m\leq k$ be an integer. Then the $\infty$-category $\mathrm{Alg}_{\mathsf{E}_m}(N(\mathcal{C}^{\otimes}))$ of
$\mathsf{E}_m$-algebras in $N(\mathcal{C}^{\otimes})$ is again a category, and carries a canonical $\mathsf{E}_{k-m}$-algebra structure in
$\mathrm{Ho}_{\infty}(\mathrm{Cat})^\times$. Furthermore, the functor
\[\pi(N(\mathcal{C}^{\otimes}))\colon\mathrm{Alg}_{\mathsf{E}_m}(N(\mathcal{C}^{\otimes}))\rightarrow \mathcal{C}^{\otimes}\]
which assigns to an $\mathsf{E}_m$-algebra its underlying object lifts to a morphism of $\mathsf{E}_{k-m}$-algebras.
\end{proposition}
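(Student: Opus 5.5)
The plan is to transport Corollaries~\ref{corinftymain1} and \ref{corinftymain2} along the nerve $N\colon\mathrm{Ho}_{\infty}(\mathrm{Cat})\rightarrow\mathrm{Cat}_{\infty}$ of (\ref{equ_defN}), and then check that everything lands back in the essential image of $N$. The nerve is fully faithful and limit preserving, so in particular it preserves finite products. By Example~\ref{exple_cartmon} it therefore induces a symmetric monoidal functor $N\colon\mathrm{Ho}_{\infty}(\mathrm{Cat})^{\times}\rightarrow\mathrm{Cat}_{\infty}^{\times}$. Postcomposition with it gives a functor
\[\mathrm{Alg}_{\mathsf{E}_j}(\mathrm{Ho}_{\infty}(\mathrm{Cat})^{\times})\rightarrow\mathrm{Alg}_{\mathsf{E}_j}(\mathrm{Cat}_{\infty}^{\times})\]
for every $j$. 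This functor is again fully faithful, since $\mathrm{Alg}$ is a full subcategory of sections and postcomposition with a fully faithful functor over $\mathrm{Fin}_{\ast}$ is fully faithful. Its essential image consists of those $\mathsf{E}_j$-monoidal $\infty$-categories whose underlying $\infty$-category is (equivalent to) the nerve of a category. The point is that all fibers of the corresponding cocartesian fibration are then finite products of the underlying one, and $N$ preserves products.

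With this in place, $N(\mathcal{C}^{\otimes})$ is an $\mathsf{E}_k$-monoidal $\infty$-category. Corollary~\ref{corinftymain1} then equips $\mathrm{Alg}_{\mathsf{E}_m}(N(\mathcal{C}^{\otimes}))$ with an $\mathsf{E}_{k-m}$-monoidal structure. Corollary~\ref{corinftymain2} makes $\pi_{N(\mathcal{C}^{\otimes})}$ into an $\mathsf{E}_{k-m}$-monoidal functor to the restriction of $N(\mathcal{C}^{\otimes})$ along $\mathsf{E}_{k-m}\rightarrow\mathsf{E}_k$. By full faithfulness of $N$ on algebras, both statements descend to $\mathrm{Ho}_{\infty}(\mathrm{Cat})^{\times}$, provided $\mathrm{Alg}_{\mathsf{E}_m}(N(\mathcal{C}^{\otimes}))$ lies in the essential image of $N$. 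That is, it remains to show that this $\infty$-category is a (nerve of a) category. Once that holds, the $\mathsf{E}_{k-m}$-algebra $\mathsf{E}_{k-m}\rightarrow\mathrm{Cat}_{\infty}^{\times}$ factors through $N$ essentially uniquely, and so does the 2-cell (\ref{diag_corinftymain2}).

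The main obstacle is this last step: showing that $\mathrm{Alg}_{\mathsf{E}_m}(N(\mathcal{C}^{\otimes}))$ is equivalent to the nerve of a 1-category. The $\infty$-categories in the essential image of $N$ are exactly those whose mapping spaces are homotopy discrete (1-truncated objects in the $(2,1)$-sense). I would argue as follows.
\begin{enumerate}
\item $\mathrm{Alg}_{\mathsf{E}_m}(N(\mathcal{C}^{\otimes}))$ is a full subcategory of $\mathrm{Fun}_{\mathsf{E}_k}(\mathsf{E}_m,N(\mathcal{C}^{\otimes}))$.
\item Mapping spaces in such a section $\infty$-category are computed as ends, i.e.\ limits, of mapping spaces of $N(\mathcal{C}^{\otimes})$.
\item The mapping spaces of $N(\mathcal{C}^{\otimes})$ are discrete, since $N(\mathcal{C}^{\otimes})$ is a category. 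This holds fiberwise because of the Segal-type product decomposition of the fibers, and over $\mathsf{E}_k$ the relevant mapping spaces split as disjoint unions over mapping spaces of $\mathsf{E}_k$ of fiber mapping spaces.
\end{enumerate}
Limits of discrete spaces are discrete, so the mapping spaces of $\mathrm{Alg}_{\mathsf{E}_m}(N(\mathcal{C}^{\otimes}))$ are discrete, as required.

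The care needed is in step 3. The mapping spaces of $\mathsf{E}_k$ itself are not discrete. I would therefore use the description of $\mathrm{Fun}_{\mathsf{E}_k}$ as the fiber of $\mathrm{Fun}(\mathsf{E}_m,N(\mathcal{C}^{\otimes}))\rightarrow\mathrm{Fun}(\mathsf{E}_m,\mathsf{E}_k)$ over the inclusion. The mapping spaces of the fiber are then fibers of maps between mapping spaces, and the fibers of $N(\mathcal{C}^{\otimes})\rightarrow\mathsf{E}_k$ on mapping spaces are discrete. This follows because the cocartesian straightening takes values in categories, so the fiber over a fixed morphism of $\mathsf{E}_k$ is a hom-set of a fiber category.
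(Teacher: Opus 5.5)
Your proposal is correct and follows essentially the same route as the paper. Both transport along the cartesian monoidal nerve, apply Corollaries~\ref{corinftymain1} and~\ref{corinftymain2}, verify that $\mathrm{Alg}_{\mathsf{E}_m}(N(\mathcal{C}^{\otimes}))$ is a 1-category, and descend using that $N$ is fully faithful on algebras (the paper's ``$N$ reflects $\mathsf{E}_m$-algebra structures''); your 1-categoricity argument, via discrete fibres of $\mathrm{Map}(x,y)\rightarrow\mathrm{Map}_{\mathsf{E}_k}(px,py)$ assembled by a limit, is a more careful version of the paper's one-line justification, which glosses over the fact that the total space over $\mathsf{E}_k$ is in general not itself a 1-category.
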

\begin{proof}
Let $\mathcal{C}^{\otimes}$ be an $\mathsf{E}_k$-algebra in $\mathrm{Ho}_{\infty}(\mathrm{Cat})^\times$. As the embedding (\ref{equ_defN}) 
is cartesian monoidal, it follows that $N(\mathcal{C}^{\otimes})$ is an $\mathsf{E}_k$-algebra in $\mathrm{Cat}_{\infty}^{\times}$. By 
Corollary~\ref{corinftymain1}, $\mathrm{Alg}_{\mathsf{E}_m}(N(\mathcal{C}^{\otimes}))$ is an $\mathsf{E}_{k-m}$-algebra in
$\mathrm{Cat}_{\infty}^{\times}$. Now, $\mathrm{Alg}_{\mathsf{E}_m}(N(\mathcal{C}^{\otimes}))$ is a full subcategory of the
$\infty$-category of functors into a category, and hence is also a category itself (i.e.\ it is equivalent to the nerve of a category). 
Consequently, the functor
$U\colon \mathrm{Alg}_{\mathsf{E}_m}(N(\mathcal{C}^{\otimes}))\rightarrow N(\mathcal{C})$ from Remark~\ref{corinftymain2} is a functor of 
categories as well. Furthermore, the embedding (\ref{equ_defN}) reflects $\mathsf{E}_m$-algebra structures. It follows that
$\mathrm{Alg}_{\mathsf{E}_m}(N(\mathcal{C}^{\otimes}))$ is an $\mathsf{E}_{k-m}$-algebra in $\mathrm{Ho}_{\infty}(\mathrm{Cat})^\times$, 
and that
$U\colon \mathrm{Alg}_{\mathsf{E}_m}(N(\mathcal{C}^{\otimes}))\rightarrow N(\mathcal{C}^{\otimes})$ is $\mathsf{E}_{k-m}$-monoidal as well. 
\end{proof}

To deduce an algebraic version of Proposition~\ref{prop_main1cat} akin to Theorem~\ref{mainthmgray}, the following algebraic notions of 
monoidal category theory refer to the ones of Joyal and Street \cite{joyalstreet}.

\begin{proposition}\label{prop_mon1cat_infty}
Let $\mathcal{C}$ be a category. Then there is an equivalence between the following notions:
\begin{enumerate}
\item monoidal structures on $\mathcal{C}$ and $\mathsf{E}_1$-algebra structures on $\mathcal{C}$ in $\mathrm{Ho}_{\infty}(\mathrm{Cat})^\times$;
\item braided monoidal structures on $\mathcal{C}$ and $\mathsf{E}_2$-algebra structures on $\mathcal{C}$ in $\mathrm{Ho}_{\infty}(\mathrm{Cat})^\times$;
\item symmetric monoidal structures on $\mathcal{C}$ and $\mathsf{E}_{\infty}$-algebra structures on $\mathcal{C}$ in
$\mathrm{Ho}_{\infty}(\mathrm{Cat})^\times$. 
\end{enumerate}
Furthermore, let $f\colon\mathcal{C}\rightarrow\mathcal{D}$ be a functor between categories $\mathcal{C}$ and $\mathcal{D}$.
\begin{enumerate}
\item Suppose $\mathcal{C}^{\otimes}$ and $\mathcal{D}^{\otimes}$ are monoidal. Then there is an equivalence between strong monoidal 
structures on $f$ and $\mathsf{E}_1$-structures on $f$ (considered as a morphism of associated $\mathsf{E}_1$-monoids).
\item Suppose $\mathcal{C}^{\otimes}$ and $\mathcal{D}^{\otimes}$ are braided monoidal. Then there is an equivalence between braided 
monoidal structures on $f$ and $E_2$-structures on $f$ (considered as a morphism of associated $\mathsf{E}_2$-monoids).
\item Suppose $\mathcal{C}^{\otimes}$ and $\mathcal{D}^{\otimes}$ are symmetric monoidal. Then there is an equivalence between symmetric monoidal structures on $f$ and $\mathsf{E}_{\infty}$-structures on $f$ (considered as a morphism of associated $\mathsf{E}_{\infty}$-monoids).
\end{enumerate}
\end{proposition}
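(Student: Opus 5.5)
The plan is to reduce everything to statements about $(2,1)$-truncated objects, where $\mathsf{E}_k$-structures are determined by low-dimensional data, and then identify that data with the Joyal--Street notions. First I would observe that $\mathrm{Ho}_{\infty}(\mathrm{Cat})$ is a $(2,1)$-category: its mapping spaces are the groupoid cores of functor categories, hence $1$-truncated. Consequently, for any $\infty$-operad $\mathcal{O}$, the $\infty$-category $\mathrm{Alg}_{\mathcal{O}}(\mathrm{Ho}_{\infty}(\mathrm{Cat})^{\times})$ only depends on the $2$-truncation of $\mathcal{O}$. More precisely, \cite[Proposition 2.3.4.7 and Example 2.3.4.8]{lurieha} identifies algebras valued in an $(n,1)$-category with algebras over the homotopy $n$-operad $h_n\mathcal{O}$. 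For $\mathsf{E}_k$ the operation spaces are configuration spaces $\mathrm{Conf}_r(\mathbb{R}^k)$. Their fundamental groupoids are as follows: for $k=1$, discrete $\Sigma_r$, giving the associative operad; for $k=2$, the braid groups $B_r$; and for $k\ge 3$, simply connected spaces with components $\Sigma_r$. Thus $h_2\mathsf{E}_k\simeq h_2\mathsf{E}_\infty$ for $k\geq 3$, which is why case (3) can be phrased with $\mathsf{E}_\infty$.

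Second, I would describe $\mathcal{O}$-algebras in $\mathrm{Ho}_{\infty}(\mathrm{Cat})^{\times}$ for a $(2,1)$-operad $\mathcal{O}$ concretely. Rectifying the cartesian $(2,1)$-category of categories to the strict $2$-category $\mathrm{Cat}$ with its cartesian product, such an algebra is a pseudo-algebra over the $\mathrm{Cat}$-operad $h_2\mathcal{O}$ regarded as a $2$-monad. Concretely, it is a pseudo-functor from the categorical operad to $\mathrm{Cat}$ satisfying the Segal condition. For $h_2\mathsf{E}_1$, the operad with contractible groupoids of $r$-ary operations (equivalently $\Delta^{op}$ via Example~\ref{exple_moncat}), pseudo-algebras are unbiased monoidal categories, which by Mac Lane coherence are equivalent to Joyal--Street monoidal categories. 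For $h_2\mathsf{E}_2$, the braid operad, one uses Fiedorowicz's/Joyal--Street's result that pseudo-algebras over the operad of braid groupoids are exactly braided monoidal categories. For $h_2\mathsf{E}_\infty$, the operad of translation groupoids of $\Sigma_r$, pseudo-algebras are symmetric monoidal categories. Morphisms of algebras in the $\infty$-category are pseudo-morphisms, i.e.\ strong (braided/symmetric) monoidal functors, since the $\infty$-categorical maps of Segal objects only permit invertible structure cells. The comparison at the level of objects and at the level of morphisms (the second list) is then the statement that these identifications form an equivalence of $(2,1)$-categories over $\mathrm{Ho}_\infty(\mathrm{Cat})$. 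This gives equivalences between the spaces of structures on a fixed $\mathcal{C}$, respectively on a fixed $f$, by passing to fibers.

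The main obstacle is the rectification step: identifying homotopy-coherent algebras over $h_2\mathcal{O}$ in the $(2,1)$-category $\mathrm{Ho}_\infty(\mathrm{Cat})$ with strict $\mathrm{Cat}$-enriched pseudo-algebras. I would handle it by modelling $\mathrm{Ho}_\infty(\mathrm{Cat})$ as the homotopy coherent nerve of the $(2,1)$-category $\mathrm{Cat}$ with the canonical model structure. Then a coherent map from the nerve of a $(2,1)$-operad into it corresponds to a normal pseudo-functor, by the known equivalence between maps into Duskin nerves and normal pseudo-functors of bicategories. The Segal/inert conditions then translate to the pseudo-algebra conditions. Once this dictionary is in place, the rest is bookkeeping plus citing the classical coherence theorems for monoidal, braided and symmetric monoidal categories.
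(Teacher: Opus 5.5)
Your argument is correct in outline but takes a different route from the paper. The paper simply cites \cite[Example 5.1.2.4]{lurieha}. As the footnote indicates, that reasoning is to be read against \cite[Propositions 5.2--5.4]{joyalstreet}, and it goes through the Additivity Theorem. An $\mathsf{E}_2$-structure on $\mathcal{C}$ is an $\mathsf{E}_1$-algebra in the $(2,1)$-category of monoidal categories and strong monoidal functors, i.e.\ a monoidal structure whose tensor functor is itself coherently monoidal, associative and unital. Joyal--Street's Eckmann--Hilton-type results identify such data with a braiding. The symmetric case is handled the same way together with stabilization ($\mathsf{E}_3$- and $\mathsf{E}_\infty$-structures agree in a $(2,1)$-category), and morphisms along the same lines.

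You instead truncate to $h_2\mathsf{E}_k$. You read off the associative, braid and $E\Sigma$-operads in $\mathrm{Cat}$ from fundamental groupoids of configuration spaces, and then invoke coherence for pseudo-algebras over these operads. Your route is uniform in $k$, shows directly where the braid groups enter, and needs neither additivity nor stabilization. The paper's route only needs homotopy-coherent rectification in the $\mathsf{E}_1$ case and then argues with classical algebra of monoidal categories, the pattern used in the rest of the paper.

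The price of your route is a heavier coherence input:
\begin{itemize}
\item Besides the Duskin-nerve step you flag, passing from Segal pseudo-functors on the category of operators to pseudo-algebras over the associated $2$-monad is itself a nontrivial coherence theorem, not a mere restatement. For $\mathsf{E}_\infty$ it is the comparison between special $\Gamma$-categories and symmetric monoidal categories.
\item The classical braid-operad results concern strict algebras, i.e.\ braided strict monoidal categories. The statement that pseudo-algebras are exactly braided monoidal categories, with pseudo-morphisms the braided strong monoidal functors, needs a precise reference.
\end{itemize}

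One factual slip should be fixed. For $k\geq 2$ the ordered configuration space $\mathrm{Conf}_r(\mathbb{R}^k)$ is connected. Its fundamental group is the pure braid group $P_r$ when $k=2$ and trivial when $k\geq 3$. So for $k\geq 3$ the operation groupoids of $h_2\mathsf{E}_k$ are contractible with free $\Sigma_r$-action, which is exactly your later translation-groupoid description of $h_2\mathsf{E}_\infty$. Taken literally, \emph{simply connected with components $\Sigma_r$} would instead reproduce $h_2\mathsf{E}_1$. Your conclusion $h_2\mathsf{E}_k\simeq h_2\mathsf{E}_\infty$ for $k\geq 3$ is right, but for this reason. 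Similarly, $h_2\mathsf{E}_1$ has contractible operation groupoids only in the planar sense; as a symmetric operad its $r$-ary operations form the discrete set $\Sigma_r$.
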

\begin{proof}
The first triple of statements is \cite[Example 5.1.2.4]{lurieha}.\footnote{The argumentation in loc.\ cit.\ is somewhat brief but correct against the background of \cite[Propositions 5.2 - 5.4]{joyalstreet} (which in turn are somewhat brief but correct themselves).}
The second triple of statements follows along similar lines (Part (3) here is trivial).
\end{proof}

It follows that every monoidal category $\mathcal{C}^{\otimes}$ gives rise to an $\mathsf{E}_1$-monoidal $\infty$-category
$N(\mathcal{C}^{\otimes})$. Whenever the former is braided (symmetric), the latter can be extended to a corresponding
$\mathsf{E}_2$-monoid ($\mathsf{E}_{\infty}$-monoid) in $\mathrm{Cat}_{\infty}^{\times}$. For the following proposition, recall the 
definition of a (commutative)  monoid inside a (braided) monoidal category, as well as the definition of morphisms between such
\cite[Section 5]{joyalstreet}. To every monoidal category $\mathcal{C}^{\otimes}$ one may hence assign the category
$\mathrm{Mon}(\mathcal{C}^{\otimes})$ of monoids in $\mathcal{C}$. 

\begin{proposition}\label{prop_alg=hom_1}
Let $\mathcal{C}^{\otimes}$ be a monoidal category. 
\begin{enumerate}
\item The $\infty$-category $\mathrm{Alg}_{\mathsf{E}_1}(N(\mathcal{C}^{\otimes}))$ is naturally equivalent to the category
$\mathrm{Mon}(\mathcal{C}^{\otimes})$ of monoids in $\mathcal{C}^{\otimes}$.
\item If $\mathcal{C}^{\otimes}$ is braided, then the $\infty$-category $\mathrm{Alg}_{\mathsf{E}_2}(N(\mathcal{C}^{\otimes}))$ is 
naturally equivalent to the category $\mathrm{CMon}(\mathcal{C}^{\otimes})$ of symmetric (or commutative) monoids in
$\mathcal{C}^{\otimes}$.
\end{enumerate}
\end{proposition}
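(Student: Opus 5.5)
For part (1) I would follow the approach of \cite[Proposition 4.1.3.19, Example 4.1.3.10]{lurieha}. The monoidal category $\mathcal{C}^{\otimes}$ is turned into a planar $\infty$-operad by way of its monoidal envelope, i.e.\ the Grothendieck construction of the associated functor $\Delta^{op}\rightarrow\mathrm{Cat}$. Its nerve is a cocartesian fibration $N(\mathcal{C}^{\otimes})\twoheadrightarrow\Delta^{op}$ whose fibers are nerves of ordinary categories. An $\mathsf{E}_1$-algebra is then a section $A\colon\Delta^{op}\rightarrow N(\mathcal{C}^{\otimes})$ that preserves inert morphisms. Every mapping space in $N(\mathcal{C}^{\otimes})$ is discrete, so such a section is determined by its restriction to the $2$-skeleton of $\Delta^{op}$, subject to conditions on $3$-simplices. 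Concretely, a section amounts to the following data:
\begin{itemize}
\item an object $A([1])$, which is an object $a\in\mathcal{C}$;
\item the image of the active map $[1]\rightarrow[2]$, which gives a morphism $a\otimes a\rightarrow a$;
\item the image of $[1]\rightarrow[0]$, which gives a morphism $1\rightarrow a$.
\end{itemize}
Commutativity of the images of the relevant $2$-simplices of $\Delta^{op}$ (the compositions $[1]\rightarrow[2]\rightarrow[3]$ through the two coface routes, and the degeneracies) translates exactly into associativity and unitality, once composed with the associator and unitors of $\mathcal{C}$. Higher simplices impose nothing new because the target is a $1$-category. Morphisms of sections are natural transformations, which reduce to monoid homomorphisms for the same reason. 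I would package this as a functor $\mathrm{Mon}(\mathcal{C}^{\otimes})\rightarrow\mathrm{Alg}_{\mathsf{E}_1}(N(\mathcal{C}^{\otimes}))$ that is bijective on objects up to equivalence and fully faithful. Naturality in $\mathcal{C}^{\otimes}$ then follows because both sides are functorial in strong monoidal functors via postcomposition, using Proposition~\ref{prop_mon1cat_infty}.

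For part (2) I would not compute with $\mathsf{E}_2$ directly. Instead, by the Additivity Theorem~\ref{thm_add}, $\mathrm{Alg}_{\mathsf{E}_2}(N(\mathcal{C}^{\otimes}))\simeq\mathrm{Alg}_{\mathsf{E}_1}(\mathrm{Alg}_{\mathsf{E}_1}(N(\mathcal{C}^{\otimes})))$, where the inner category carries the $\mathsf{E}_1$-structure of Corollary~\ref{corinftymain1}. By part (1) and Proposition~\ref{prop_main1cat}, this is $\mathrm{Mon}(\mathrm{Mon}(\mathcal{C}^{\otimes}))$ for the induced monoidal structure on $\mathrm{Mon}(\mathcal{C}^{\otimes})$. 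I then need this induced structure to be the Joyal--Street one, with multiplication given by (\ref{equ_intro_mono_mult}). The forgetful functor is monoidal by Corollary~\ref{corinftymain2}, so the tensor of underlying objects is $a\otimes b$. Identifying the multiplication, i.e.\ pinning down where the braiding enters, is the main obstacle. I would try to check it on the universal case, the free braided monoidal category on two monoid objects, or track the $\mathsf{E}_2$ cell $\mathsf{E}_1\otimes_{\mathrm{BV}}\mathsf{E}_1$ interchange through the translation of Proposition~\ref{prop_mon1cat_infty}(2), where the braiding is defined via that interchange.

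Granting this identification, I would finish with the classical Eckmann--Hilton argument internal to $\mathcal{C}$. A monoid in $\mathrm{Mon}(\mathcal{C}^{\otimes})$ is an object with two monoid structures such that the second multiplication and unit are monoid maps for the first. The interchange law then forces the two structures to coincide and the multiplication to satisfy $m\circ\beta=m$, which is exactly a commutative monoid. Conversely, a commutative monoid has a multiplication that is a monoid homomorphism precisely because of commutativity. Morphisms match on both sides, since morphisms in $\mathrm{Mon}(\mathrm{Mon}(\mathcal{C}^{\otimes}))$ are maps preserving both structures. This gives the equivalence with $\mathrm{CMon}(\mathcal{C}^{\otimes})$.
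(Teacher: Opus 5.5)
Your proposal is correct and follows essentially the paper's route. Part (1) identifies inert-preserving sections of $N(\mathcal{C}^{\otimes})\to\Delta^{op}$ with bar constructions of monoids; the paper simply cites Lurie for this, while you unfold it by hand, and your reduction to the objects $[0],\dots,[3]$ tacitly uses generalized associativity rather than just 2-coskeletality of nerves. Part (2) is Additivity, then part (1), then Eckmann--Hilton, exactly as in the paper.

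The identification you flag as the main obstacle is that the induced tensor product on $\mathrm{Mon}(\mathcal{C}^{\otimes})$ has multiplication $(m_A\otimes m_B)\circ(1\otimes\beta\otimes 1)$. The paper's proof leaves this implicit and supplies it afterwards in Proposition~\ref{prop_mult_explicit}. That proposition relies only on part (1), so there is no circularity. It follows your second suggested route, using the strong monoidal structure $1\otimes\beta\otimes 1$ on $-\otimes-$ from Joyal--Street.
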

\begin{proof}
Part (1) follows from the observation that $\mathrm{Alg}_{\mathsf{E}_1}(N(\mathcal{C}^{\otimes}))$ is (up to natural equivalence) the 
category of functors $A\colon\Delta^{op}\rightarrow\mathcal{C}$ which exhibit $A$ as the Bar construction of a monoid $M$ in
$\mathcal{C}^{\otimes}$. Formally, this is given by \cite[Example 4.1.6.18]{lurieha} together with \cite[Theorem 5.4.3.5]{lurieha}.

For Part (2), if $\mathcal{C}^{\otimes}$ is braided monoidal,
we have equivalences
\[\mathrm{Alg}_{\mathsf{E}_2}(N(\mathcal{C}^{\otimes}))\simeq\mathrm{Alg}_{\mathsf{E}_1}(\mathrm{Alg}_{\mathsf{E}_1}(N(\mathcal{C}^{\otimes})))\simeq\mathrm{Alg}_{\mathsf{E}_1}(N(\mathrm{Mon}(\mathcal{C}^{\otimes})))\simeq\mathrm{Mon}(\mathrm{Mon}(\mathcal{C}^{\otimes}))\]
by Part (1) and the Additivity Theorem (Theorem~\ref{thm_add}). The latter is isomorphic to $\mathrm{CMon}(\mathcal{C}^{\otimes})$ by the 
classic Eckmann-Hilton argument.
\end{proof}

\begin{theorem}\label{cor_main1cat}
Let $\mathcal{C}^{\otimes}$ be a monoidal category.
\begin{enumerate}
\item Every braiding on $\mathcal{C}^{\otimes}$ induces a monoidal structure on $\mathrm{Mon}(\mathcal{C}^{\otimes})$.
\item Every symmetric braiding on $\mathcal{C}^{\otimes}$ induces \begin{enumerate}
\item a symmetric monoidal structure on $\mathrm{Mon}(\mathcal{C}^{\otimes})$,
\item a symmetric monoidal structure on the category $\mathrm{CMon}(\mathcal{C}^{\otimes})$. 
\end{enumerate}
\end{enumerate}
In each case the corresponding structure is canonically induced, i.e.\ the forgetful functors
$\mathrm{Mon}(\mathcal{C}^{\otimes})\rightarrow\mathcal{C}^{\otimes}$ and
$\mathrm{CMon}(\mathcal{C}^{\otimes})\rightarrow\mathcal{C}^{\otimes}$ are as monoidal as their domain is.
\end{theorem}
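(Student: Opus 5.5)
The plan is to deduce Theorem~\ref{cor_main1cat} from Proposition~\ref{prop_main1cat}, using Propositions~\ref{prop_mon1cat_infty} and~\ref{prop_alg=hom_1} to move between the algebraic and the $\infty$-categorical notions.

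For Part (1), let $\mathcal{C}^{\otimes}$ carry a braiding. By Proposition~\ref{prop_mon1cat_infty}, this braided monoidal structure corresponds to an $\mathsf{E}_2$-algebra structure on $\mathcal{C}$ in $\mathrm{Ho}_{\infty}(\mathrm{Cat})^{\times}$, and its underlying $\mathsf{E}_1$-structure is the given monoidal structure. We apply Proposition~\ref{prop_main1cat} with $k=2$ and $m=1$. It shows that $\mathrm{Alg}_{\mathsf{E}_1}(N(\mathcal{C}^{\otimes}))$ is a category carrying an $\mathsf{E}_1$-algebra structure, and that the forgetful functor to $\mathcal{C}$ is a morphism of $\mathsf{E}_1$-algebras.

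By Proposition~\ref{prop_alg=hom_1}(1), this category is equivalent to $\mathrm{Mon}(\mathcal{C}^{\otimes})$, compatibly with the forgetful functors. We transport the structure along this equivalence. Proposition~\ref{prop_mon1cat_infty} then turns the $\mathsf{E}_1$-structure into a monoidal structure on $\mathrm{Mon}(\mathcal{C}^{\otimes})$ in the sense of Joyal--Street. It also turns the $\mathsf{E}_1$-morphism structure on the forgetful functor into a strong monoidal structure on $\mathrm{Mon}(\mathcal{C}^{\otimes})\rightarrow\mathcal{C}^{\otimes}$.

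For Part (2)(a), a symmetric braiding corresponds to an $\mathsf{E}_\infty$-structure. We take $k=\infty$ and $m=1$, so that $k-m=\infty$. The same transport then gives a symmetric monoidal structure on $\mathrm{Mon}(\mathcal{C}^{\otimes})$, together with a symmetric monoidal forgetful functor.

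For Part (2)(b), we take $k=\infty$ and $m=2$. This makes $\mathrm{Alg}_{\mathsf{E}_2}(N(\mathcal{C}^{\otimes}))$ an $\mathsf{E}_\infty$-algebra. By Proposition~\ref{prop_alg=hom_1}(2), this category is equivalent to $\mathrm{CMon}(\mathcal{C}^{\otimes})$. Alternatively, one can note that $\mathsf{E}_\infty=\mathsf{E}_2\otimes_{\mathrm{BV}}\mathsf{E}_\infty$ and restrict along it. In either case we conclude exactly as before.

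The main obstacle is the compatibility of the equivalences in Proposition~\ref{prop_alg=hom_1} with the forgetful functors. We need $\mathrm{Alg}_{\mathsf{E}_m}(N(\mathcal{C}^{\otimes}))\simeq\mathrm{Mon}(\mathcal{C}^{\otimes})$ (respectively $\mathrm{CMon}$) to commute with evaluation at $u$. Only then does the induced structure make the algebraic forgetful functor ``as monoidal as its domain''.

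To check this, I would use the explicit description from the proof of Proposition~\ref{prop_alg=hom_1}: an $\mathsf{E}_1$-algebra corresponds to the Bar construction $\Delta^{op}\rightarrow\mathcal{C}$, and its value at $[1]$ is the underlying object. So the triangle commutes up to natural isomorphism. Because $\mathrm{Ho}_{\infty}(\mathrm{Cat})$ is locally groupoidal, transporting an $\mathsf{E}_j$-structure along an equivalence (and composing with a natural isomorphism) is harmless.

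A smaller point concerns ``strong monoidal'' versus ``$\mathsf{E}_1$-morphism''. This is handled by the second triple of statements in Proposition~\ref{prop_mon1cat_infty}.
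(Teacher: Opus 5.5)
Your proposal is correct and follows essentially the same route as the paper: you encode the braiding (resp.\ symmetry) as an $\mathsf{E}_2$- (resp.\ $\mathsf{E}_\infty$-) structure via Proposition~\ref{prop_mon1cat_infty}, apply Proposition~\ref{prop_main1cat}, and translate back using Proposition~\ref{prop_alg=hom_1}. The differences are cosmetic. For Part (2) the paper phrases the persistence of symmetry via the Baez--Dolan--Lurie stabilization, whereas you use the convention $\infty-m=\infty$ directly (with $m=2$ for $\mathrm{CMon}$), and you also make explicit the compatibility of Proposition~\ref{prop_alg=hom_1} with the forgetful functors, which the paper leaves implicit.
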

\begin{proof}
Let $\mathcal{C}^{\otimes}$ be a monoidal category considered as an $\mathsf{E}_1$-algebra in
$\mathrm{Ho}_{\infty}(\mathrm{Cat})^{\times}$. For Part (1), given a braiding $\beta$ on $\mathcal{C}$, let $\mathcal{C}^{\beta}$ 
be its associated $\mathsf{E}_1$-algebra in $\mathrm{Alg}_{\mathsf{E}_1}(\mathrm{Ho}_{\infty}(\mathrm{Cat})^{\times})$ with underlying
$\mathsf{E}_1$-algebra $\mathcal{C}^{\otimes}$ in $\mathrm{Ho}_{\infty}(\mathrm{Cat})^{\times}$. Then
$\mathrm{Alg}_{\mathsf{E}_1}(\mathcal{C}^{\beta})$ is an $\mathsf{E}_1$-algebra in $\mathrm{Ho}_{\infty}(\mathrm{Cat})^{\times}$ with underlying category $\mathrm{Alg}_{\mathsf{E}_1}(\mathcal{C}^{\otimes})$, and the forgetful functor
$U\colon\mathrm{Alg}_{\mathsf{E}_1}(\mathcal{C}^{\beta})\rightarrow\mathcal{C}^{\otimes}$ is a morphism of $\mathsf{E}_1$-algebras by 
Proposition~\ref{prop_main1cat}. By Proposition~\ref{prop_alg=hom_1} and Proposition~\ref{prop_main1cat}, that means that the category
$\mathrm{Mon}(\mathcal{C}^{\otimes})$ is monoidal, and that the forgetful functor
$\mathrm{Mon}(\mathcal{C}^{\beta})\rightarrow\mathcal{C}^{\otimes}$ is monoidal as well.

Part (2) is shown in the same way, additionally only using that symmetry is a fixed point in this context (which follows from the
Baez--Dolan--Lurie Stabilization Theorem \cite[Example 5.1.2.3]{lurieha}).
\end{proof}

\begin{remark}
Theorem~\ref{cor_main1cat} is well-known, albeit with a different proof; Part (2).(b) in particular can be found for example in
\cite[Section 5]{joyalstreet}.
\end{remark}

In (\ref{equ_intro_mono_mult}) we discussed a formula that defines a monoidal structure on $\mathrm{Mon}(\mathcal{C}^{\otimes})$ by hand 
whenever $\mathcal{C}^{\otimes}$ is a braided monoidal category. Here, the multiplication on the tensor product of two monoids is defined 
directly by way of the two component multiplications and the braiding on $\mathcal{C}^{\otimes}$. We end this section with the 
observation that the global constructions used to prove Theorem~\ref{cor_main1cat} faithfully recover these local constructions of
2-dimensional algebra as well. 

\begin{proposition}\label{prop_mult_explicit}
Let $\mathcal{C}^{\otimes}$ be a braided monoidal category. The multiplication on the tensor product of two monoids in
$\mathcal{C}^{\otimes}$ via the abstractly defined monoidal structure on $\mathrm{Mon}(\mathcal{C}^{\otimes})$ from
Theorem~\ref{cor_main1cat}.1 is precisely given by the composition (\ref{equ_intro_mono_mult}).
\end{proposition}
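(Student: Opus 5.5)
We will unwind the abstract construction of Theorem~\ref{cor_main1cat}.1 one layer at a time and read off its binary operation. The braided monoidal category $\mathcal{C}^{\otimes}$ corresponds, by Proposition~\ref{prop_mon1cat_infty}, to an $\mathsf{E}_2$-algebra in $\mathrm{Ho}_{\infty}(\mathrm{Cat})^{\times}$. Via Theorem~\ref{thm_add}, this is the same as an $\mathsf{E}_1$-algebra $\mathcal{C}^{\beta}$ in $\mathrm{Alg}_{\mathsf{E}_1}(\mathrm{Ho}_{\infty}(\mathrm{Cat})^{\times})$.

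Its underlying binary operation is the tensor functor $\otimes\colon\mathcal{C}\times\mathcal{C}\rightarrow\mathcal{C}$. Crucially, $\otimes$ is regarded here as a \emph{strong monoidal} functor from the product monoidal category $\mathcal{C}^{\otimes}\times\mathcal{C}^{\otimes}$ to $\mathcal{C}^{\otimes}$. The first step is to identify this monoidal structure. Its structure isomorphism
\[(A\otimes B)\otimes(A'\otimes B')\cong(A\otimes A')\otimes(B\otimes B')\]
is the interchange map built from the braiding, namely associators composed with $\beta_{B,A'}$. This is the classical Joyal--Street correspondence between braidings and monoidal structures on $\otimes$ (\cite[Section 5]{joyalstreet}), which underlies Proposition~\ref{prop_mon1cat_infty}.(2). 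I would cite it rather than reprove it, and check only that the identification in \cite[Example 5.1.2.4]{lurieha} is compatible with it.

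The second step uses the proof of Corollary~\ref{corinftymain1}. The monoidal structure on $\mathrm{Alg}_{\mathsf{E}_1}(\mathcal{C}^{\beta})$ is obtained by postcomposing with the product-preserving functor $\mathrm{Alg}_{\mathsf{E}_1}(-)$ of Proposition~\ref{thminftymain}. Hence its tensor functor is the composite
\[\mathrm{Alg}_{\mathsf{E}_1}(\mathcal{C})\times\mathrm{Alg}_{\mathsf{E}_1}(\mathcal{C})\simeq\mathrm{Alg}_{\mathsf{E}_1}(\mathcal{C}\times\mathcal{C})\xrightarrow{\otimes_{*}}\mathrm{Alg}_{\mathsf{E}_1}(\mathcal{C}).\]
The first map is the product comparison, and $\otimes_{*}$ is the functor induced on algebras by the strong monoidal functor $\otimes$. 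So it remains to compute how a strong monoidal functor $F$ acts on $\mathsf{E}_1$-algebras. Under Proposition~\ref{prop_alg=hom_1}.(1), $F_{*}$ is postcomposition of the monoid's section $\Delta^{op}\rightarrow N(\mathcal{C}^{\otimes})$ with $N(F)$. On bar constructions this sends a monoid $M$ to $F(M)$ with multiplication
\[F(M)\otimes F(M)\xrightarrow{\mu_F}F(M\otimes M)\xrightarrow{F(m)}F(M).\]
Here $\mu_F$ is the monoidal structure map of $F$, read off from the image of the active map $\langle 2\rangle\rightarrow\langle 1\rangle$ and its cocartesian factorization.

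The final step applies this with $F=\otimes$ and $M=(A,B)$, a monoid in $\mathcal{C}\times\mathcal{C}$. Since the product comparison is the identity on components, $M$ has multiplication $(m_A,m_B)$. The result is precisely the composite (\ref{equ_intro_mono_mult}).

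The main obstacle is the first step: confirming that the monoidal structure on $\otimes$ produced by the abstract Additivity equivalence really is the braiding interchange. It could a priori be some other structure, for instance one using $\beta^{-1}$, or one differing by an automorphism. I would address this by tracking the $\mathsf{E}_1\otimes_{\mathrm{BV}}\mathsf{E}_1\rightarrow\mathsf{E}_2$ map on the generating binary operations, at the level of $\pi_0$ of the spaces of $2$- and $4$-ary operations. Since $\mathrm{Ho}_{\infty}(\mathrm{Cat})$ is a $(2,1)$-category, the structure is determined by these low-dimensional data. The relevant interchange then corresponds to the generator of $\pi_1$ of the configuration space of two points, which is the braiding, up to the sign convention fixed in Proposition~\ref{prop_mon1cat_infty}.
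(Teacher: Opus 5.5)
Your proposal is correct and follows the paper's proof essentially step by step. The tensor product on $\mathrm{Alg}_{\mathsf{E}_1}(\mathcal{C}^{\otimes})$ is written as the componentwise comparison $\mathrm{Alg}_{\mathsf{E}_1}(\mathcal{C})\times\mathrm{Alg}_{\mathsf{E}_1}(\mathcal{C})\simeq\mathrm{Alg}_{\mathsf{E}_1}(\mathcal{C}\times\mathcal{C})$ followed by $\mathrm{Alg}_{\mathsf{E}_1}(-\otimes-)$, a strong monoidal functor $F$ acts on monoids by $F(m)\circ\chi_F$, and $\chi_{-\otimes-}=1\otimes\beta\otimes 1$.

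The only difference is that the paper does not treat your ``main obstacle'' separately: it takes $\chi_{-\otimes-}=1\otimes\beta\otimes 1$ directly from \cite[Proposition 5.2]{joyalstreet}, as built into the correspondence of Proposition~\ref{prop_mon1cat_infty}, so your configuration-space check is extra care rather than a required step. If you do carry it out, note that the information lies in paths between operations ($\pi_1$, not the trivial $\pi_0$), and that $\beta$ is a half-twist path from an operation to its transpose, not the full-twist generator of $\pi_1$ of ordered two-point configurations.
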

\begin{proof}
The assignment of a monoidal category $\mathcal{C}^{\otimes}$ to its associated category
$\mathrm{Mon}(\mathcal{C}^{\otimes})$ of monoids is functorial. For example, given a (strong) monoidal functor
$F\colon\mathcal{C}^{\otimes}\rightarrow\mathcal{D}^{\otimes}$ between monoidal categories, there is a natural isomorphism
\begin{align}\label{equ_1cat_chi_1}
\chi_F\colon F(-)\otimes F(-)\xrightarrow{\cong}F(-\otimes -)
\end{align}
as part of its data. The induced functor
$\mathrm{Mon}(F)\colon\mathrm{Mon}(\mathcal{C}^{\otimes})\rightarrow\mathrm{Mon}(\mathcal{D}^{\otimes})$ then maps a monoid $\vec{A}$ in
$\mathcal{C}^{\otimes}$ with multiplication $m_A\colon A\otimes A\rightarrow A$ to a monoid $F(\vec{A})$ with multiplication
\[F(A)\otimes F(A)\xrightarrow{\chi_F}F(A\otimes A)\xrightarrow{F(m_A)}A.\]
Essentially the same applies to monoidal $\infty$-categories and the $\infty$-category of $\mathsf{E}_1$-algebras therein. Indeed, let 
$F\colon\mathcal{C}^{\otimes}\rightarrow\mathcal{D}^{\otimes}$ be a monoidal functor between monoidal $\infty$-categories. 
Part of its data is a natural equivalence
\begin{align}\label{equ_1cat_chi_infty}
\chi_F\colon F(A)\otimes F(B)\xrightarrow{\simeq}F(A\otimes B)
\end{align}
for every pair of objects $A,B\in\mathcal{C}$. It is given by definition of the objects $A\otimes B$ by way of cocartesian lifts against the 
generic multiplication $d_1\colon [2]\rightarrow[1]$ in $\Delta^{op}$ with fixed domain $(A,B)\in\mathcal{C}^{\otimes}([2])$, and the 
assumption that $F$ preserves cartesian morphisms.\footnote{To make sense of this one has to pick distinguished representatives for the
the tensor product of a pair of objects in $\mathcal{C}^{\otimes}$. This generally can be done by Unstraightening the cocartesian fibration
$\mathcal{C}^{\otimes}\twoheadrightarrow\Delta^{op}$.} Let $\vec{A}$ be an $\mathsf{E}_1$-monoid in $\mathcal{C}^{\otimes}$, and let
$m_A\colon A\otimes A\rightarrow A$ be its associated multiplication. Then the cartesian monoidal functor
\begin{align}\label{equ_1cat_algfunctor}
\mathrm{Alg}_{\mathsf{E}_1}(-)\colon\mathrm{MonCat}_{\infty}^{\times}\rightarrow\mathrm{Cat}_{\infty}^{\times}
\end{align} 
from Proposition~\ref{thminftymain} induces a functor
\[\mathrm{Alg}_{v}(F)\colon\mathrm{Alg}_{\mathsf{E}_1}(\mathcal{C}^{\otimes})\rightarrow\mathrm{Alg}_{\mathsf{E}_1}(\mathcal{C}^{\otimes})\]
that maps $\vec{A}$ to an $\mathsf{E}_1$-monoid $F(\vec{A})$ in $\mathcal{D}^{\otimes}$ by way of post-composition of
$\vec{A}\colon \mathsf{E}_1\rightarrow\mathcal{C}^{\otimes}$ with $F$. In particular, its multiplication
$m_{F(A)}\colon F(A)\otimes F(A)\rightarrow F(A)$ is defined as the composition
\begin{align}\label{equ_inftycat_algmult}
\begin{gathered}
\xymatrix{
F(A)\otimes F(A)\ar[r]^{\chi_F}\ar@/_1pc/[dr]_{m_{F(A)}} & F(A\otimes A)\ar[d]^{F(m_A)} \\
 &  F(A).
}
\end{gathered}
\end{align}
For example, the functor (\ref{equ_1cat_algfunctor}) is cartesian monoidal and so it induces a functor
\begin{align}\label{equ_inftycat_algfunctor}
\mathrm{Alg}_{\mathsf{E}_1}(\mathrm{Alg}_{\mathsf{E}_1}(-))\colon\mathrm{Alg}_{\mathsf{E}_1}(\mathrm{MonCat}_{\infty}^{\times})\rightarrow\mathrm{Alg}_{\mathsf{E}_1}(\mathrm{Cat}_{\infty}^{\times}).
\end{align}
Let $\mathcal{C}^{\otimes}$ be an $\mathsf{E}_1$-algebra in $\mathrm{MonCat}_{\infty}$ --- or equivalently, an $\mathsf{E}_2$-algebra in
$\mathrm{Cat}_{\infty}^{\times}$ --- with multiplication
$-\otimes-\colon\mathcal{C}^{\otimes}\times\mathcal{C}^{\otimes}\rightarrow\mathcal{C}^{\otimes}$ in $\mathrm{MonCat}_{\infty}$.\footnote{Note that the underlying functor of $-\otimes-$ is necessarily the tensor product on $\mathcal{C}^{\otimes}$ by way of an
Eckmann-Hilton argument.}
The functor (\ref{equ_inftycat_algfunctor}) maps $\mathcal{C}^{\otimes}$ to a monoidal $\infty$-category
$\mathrm{Alg}_{\mathsf{E}_1}(\mathcal{C}^{\otimes})$ with a tensor product defined as the composition
\begin{align}\label{equ_inftycat_algmult_exple}
\begin{gathered}
\xymatrix{
\mathrm{Alg}_{\mathsf{E}_1}(\mathcal{C}^{\otimes})\times \mathrm{Alg}_{\mathsf{E}_1}(\mathcal{C}^{\otimes})\ar[rr]^(.55){\chi_{\mathrm{Alg}_{\mathsf{E}_1}(-)}}_(.55){\simeq}\ar@/_1pc/[drr]_{-\otimes_{\mathrm{Alg}_{\mathsf{E}_1}(\mathcal{C}^{\otimes})}-\hspace{2pc}} & & \mathrm{Alg}_{\mathsf{E}_1}(\mathcal{C}^{\otimes}\times\mathcal{C}^{\otimes})\ar[d]^{\mathrm{Alg}_{\mathsf{E}_1}(-\otimes-)} \\
 & & \mathrm{Alg}_{\mathsf{E}_1}(\mathcal{C}^{\otimes}).
}
\end{gathered}
\end{align}
Now, let $\vec{A},\vec{B}$ be two $\mathsf{E}_1$-monoids in $\mathcal{C}^{\otimes}$ with multiplications
$m_A\colon A\otimes A\rightarrow A$ and $m_B\colon B\otimes B\rightarrow B$, respectively. We want to compute (the multiplication of) their 
tensor product $\vec{A}\otimes \vec{B}$ in $\mathrm{Alg}_{\mathsf{E}_1}(\mathcal{C}^{\otimes})$. Therefore, we first note that the functor
\[\chi_{\mathrm{Alg}_{\mathsf{E}_1}(-)}\colon
\mathrm{Alg}_{\mathsf{E}_1}(\mathcal{C}^{\otimes})\times \mathrm{Alg}_{\mathsf{E}_1}(\mathcal{C}^{\otimes})\rightarrow\mathrm{Alg}_{\mathsf{E}_1}(\mathcal{C}^{\otimes}\times\mathcal{C}^{\otimes})
\]
in (\ref{equ_inftycat_algmult_exple}) maps a pair of $\mathsf{E}_1$-algebras $(\vec{A},\vec{B})$ in $\mathcal{C}^{\otimes}$ to the obvious
$\mathsf{E}_1$-algebra $\overrightarrow{(A,B)}$ in $\mathcal{C}^{\otimes}\times\mathcal{C}^{\otimes}$ with componentwise operations. In particular, its multiplication $m_{(A,B)}$ is the morphism
\[(A,B)\otimes (A,B):=(A\otimes A,B\otimes B)\xrightarrow{(m_A,m_B)} (A,B).\]
We are left to compute the multiplication of the $\mathsf{E}_1$-monoid $\mathrm{Alg}_{\mathsf{E}_1}(-\otimes-)(\overrightarrow{(A,B)})$. 
However, $-\otimes-$ itself is just a morphism in a monoidal $\infty$-category. Thus, according to (\ref{equ_inftycat_algmult}), it is 
given by the composition
\[\xymatrix{
(A\otimes B)\otimes (A\otimes B)\ar[r]^{\chi_{-\otimes -}}\ar@/_1pc/[dr]_{m_{\mathrm{Alg}_{\mathsf{E}_1}(-\otimes-)(\overrightarrow{(A,B)})}\hspace{2pc}} & (A\otimes A)\otimes (B\otimes B)\ar[d]^{m_A\otimes m_B} \\
 &  A\otimes B.
}
\]
Lastly, let us apply this to the case that we start with a braided monoidal category. We have seen in Proposition~\ref{prop_main1cat} that 
the functor (\ref{equ_1cat_algfunctor}) restricts to a functor
\[\mathrm{Alg}_{\mathsf{E}_1}(-)\colon\mathrm{Alg}_{\mathsf{E}_1}(\mathrm{Ho}_{\infty}(\mathrm{Cat}))^{\times}\rightarrow\mathrm{Ho}_{\infty}(\mathrm{Cat}))^{\times}.\]
Furthermore, whenever $F\colon\mathcal{C}^{\otimes}\rightarrow\mathcal{D}^{\otimes}$ is a monoidal functor between monoidal categories, 
then the natural isomorphism (\ref{equ_1cat_chi_1}) is precisely the natural equivalence (\ref{equ_1cat_chi_infty}) when considered as a 
morphism in $\mathrm{Alg}_{\mathsf{E}_1}(\mathrm{Cat})^{\times}$ via Proposition~\ref{prop_mon1cat_infty}. Now, if $\mathcal{C}^{\otimes}$ 
is a monoidal category equipped with a braiding $\beta$, then the tensor product
$\mathcal{C}^{\otimes}\times\mathcal{C}^{\otimes}\rightarrow\mathcal{C}^{\otimes}$ can be promoted to a monoidal functor
\cite[Proposition 5.2]{joyalstreet}.
Its associated natural isomorphism $\chi_{-\otimes -}$ in (\ref{equ_1cat_chi_1}) is given by the natural isomorphism 
$1\otimes\beta\otimes 1$. In particular, let $\vec{A},\vec{B}$ be two monoids in $\mathcal{C}^{\otimes}$. Under the identification of 
Proposition~\ref{prop_alg=hom_1}, the multiplication on the tensor product $\vec{A}\otimes\vec{B}$ in
$\mathrm{Alg}_{\mathsf{E}_1}(\mathcal{C}^{\otimes})$ is given by the composition
\[\xymatrix{
(A\otimes B)\otimes (A\otimes B)\ar[r]^{1\otimes\beta\otimes 1}\ar@/_1pc/[dr]_{m_{\mathrm{Alg}_{\mathsf{E}_1}(-\otimes-)(\overrightarrow{(A,B)})}\hspace{2pc}} & (A\otimes A)\otimes (B\otimes B)\ar[d]^{m_A\otimes m_B} \\
 &  A\otimes B.
}
\]
This is precisely the formula (\ref{equ_intro_mono_mult}) as stated.
\end{proof}

\section{Monoidal bicategories and monoids therein}\label{sec_modbicat}

In contrast to a category, a general bicategory $\mathcal{C}$ has non-invertible 2-cells and hence does not reduce to an
$\infty$-categorical structure without further ado. In particular, to equip a general bicategory $\mathcal{C}$ with a monoidal 
structure requires more than to equip its underlying locally groupoidal bicategory with one. We hence cannot apply 
Corollary~\ref{corinftymain1} directly. However, we can treat the homotopy theory of bicategories abstractly as a cartesian monoidal
$\infty$-category, and prove a suitable version of Corollary~\ref{corinftymain1} instead.

Therefore, in this section we define an $\infty$-categorical functor $\mathrm{Alg}_{\mathsf{E}_1}(-)$ that maps a monoidal bicategory
$\mathcal{C}$ to its bicategory of monoids by hand, and show that this functor is cartesian monoidal. This allows us to argue as in 
Corollary~\ref{corinftymain1} to prove Theorem~\ref{thm_main_12} directly. To manage the book-keeping efficiently, we work mostly model 
categorically. Therefore, we first note that both Theorem~\ref{thm_main_11} and Theorem~\ref{thm_main_12} can be restated in terms of 
Gray monoids. We recall that a Gray monoid is a (strict) monoid in the monoidal category
$\mathrm{Gray}:=(\twocat,\otimes_{\mathrm{Gr}})$ of 2-categories and (strict) 2-functors equipped with the Gray tensor product. As 
such, every Gray monoid is in particular a monoidal bicategory. Vice versa, there is the following theorem. 

\begin{theorem}\label{thm_bito2cat}
Every monoidal bicategory $\mathcal{C}$ is monoidally biequivalent to a Gray monoid $\bar{\mathcal{C}}$. 
By way of this biequivalence, every monoid in $\mathcal{C}$ gives rise to an essentially unique pseudomonoid in $\bar{\mathcal{C}}$ in the 
sense of \cite{daystreet}, and vice versa. 
\end{theorem}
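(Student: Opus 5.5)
The plan is to take the first sentence from the Coherence Theorem of Gordon, Power and Street \cite{gps_coherence}, and then obtain the transfer statement about monoids by showing that monoidal biequivalences preserve and reflect pseudomonoids. For the first sentence I would follow Gurski's presentation: view $\mathcal{C}$ as a one-object tricategory $\Sigma\mathcal{C}$. Tricategorical coherence then gives a triequivalence $\Sigma\mathcal{C}\simeq\Sigma\bar{\mathcal{C}}$ with a one-object $\mathrm{Gray}$-category, that is, a Gray monoid $\bar{\mathcal{C}}$. A triequivalence between one-object tricategories that preserves the base object is exactly a monoidal biequivalence $F\colon\mathcal{C}\to\bar{\mathcal{C}}$. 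We fix a pseudo-inverse $G$ together with the monoidal pseudonatural equivalences and modifications exhibiting the biequivalence.

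For the second sentence, let $(A,m,\eta,\alpha,\lambda,\rho)$ be a monoid in $\mathcal{C}$ in the algebraic sense, with invertible associator and unitor 2-cells satisfying the two pseudomonoid axioms. Its image is $F(A)$ with multiplication $F(m)\circ\chi_F$ and unit $F(\eta)\circ\iota_F$. The 2-cells are built from $F(\alpha)$ and the structure 2-cells of the monoidal pseudofunctor: the components of $\chi_F$ and $\iota_F$, together with the modifications $\omega$, $\gamma$, $\delta$ comparing associators and unitors. The key steps are as follows.
\begin{enumerate}
\item Show that monoidal pseudofunctors send pseudomonoids to pseudomonoids. The pentagon-type axiom for $F(A)$ reduces to the axiom for $A$ plus the coherence axioms of $(F,\chi,\iota,\omega,\gamma,\delta)$.
\item Show that monoidal pseudonatural transformations and modifications induce pseudomonoid morphisms and 2-cells. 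This makes $\mathrm{Mon}(-)$ a trihomomorphism-like assignment that preserves biequivalences.
\item Apply this to $G$ and to the equivalences $GF\simeq 1$ and $FG\simeq 1$. Then every monoid in $\mathcal{C}$ is equivalent in $\mathrm{Mon}(\mathcal{C})$ to the transport $GF(A)$ of its image, and every pseudomonoid in $\bar{\mathcal{C}}$ arises this way up to equivalence.
\end{enumerate}
This yields a biequivalence $\mathrm{Mon}(\mathcal{C})\simeq\mathrm{Mon}(\bar{\mathcal{C}})$, and hence the claimed essential uniqueness.

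To limit the diagram work, I would not verify the axioms by hand. Instead I would use the fact that pseudomonoids in $\mathcal{C}$ are the same as monoidal pseudofunctors $\mathbb{1}\to\mathcal{C}$ from the terminal monoidal bicategory. More precisely, they are lax-type morphisms from the free monoidal bicategory on a pseudomonoid, as in \cite{daystreet}. Under this description, transport along $F$ is just postcomposition. Steps (1) and (2) then become the statement that composites of monoidal pseudofunctors and whiskerings of monoidal transformations are again such, which is part of the tricategory structure of monoidal bicategories.

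The main obstacle is making precise that postcomposition is well defined and biequivalence-invariant. This requires the (tri)functoriality of monoidal bicategories, monoidal pseudofunctors, transformations and modifications, which is documented but long \cite{johnsonyau}. If this route proves too heavy, I would fall back on the direct verification in step (1). In a Gray monoid target, only the pentagon and the two unit axioms for $F(A)$ need checking, and each is a pasting of $F(\alpha)$ against $\omega$ that closes up by the corresponding axiom of $F$. Steps (2) and (3) are then comparatively routine.
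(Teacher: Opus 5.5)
Your treatment of the first sentence is the paper's: it simply quotes the Gordon--Power--Street coherence theorem \cite{gps_coherence}. For the second sentence the paper also gives only a citation, to Lack's coherent approach to pseudomonads \cite{lackpsm}. The intended argument goes through Lack's characterization of pseudomonoids in a Gray monoid as Gray functors out of the walking pseudomonoid $\mathrm{PsM}$ (cf.\ Remark~\ref{rem_psmon}). That turns the correspondence into a statement about maps out of one coherent, indeed cofibrant, corepresenting object, so no pseudomonoid axiom is ever transported by hand.

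Your main route is instead transport of structure. You push monoids forward along $F$ and back along $G$, and compare via the monoidal equivalences $GF\simeq 1$ and $FG\simeq 1$. This is more elementary and proves the stronger statement $\mathrm{Mon}(\mathcal{C})\simeq\mathrm{Mon}(\bar{\mathcal{C}})$ with explicit formulas. The price is that you must establish pseudofunctoriality of $\mathrm{Mon}(-)$ in monoidal pseudofunctors, monoidal transformations and monoidal modifications. That is exactly the coherence bookkeeping Lack's theorem packages for you.

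One point in your shortcut needs straightening out, since as written it merges two different corepresentations. Pseudomonoids in $\mathcal{C}$ are monoidal pseudofunctors out of the terminal monoidal bicategory only in Day--Street's weak sense, where $\chi$ and $\iota$ are mere pseudonatural transformations; a strong one would force $A\simeq I$. Alternatively, they are strong (for Gray targets, strict Gray) monoidal morphisms out of the free monoidal bicategory on a pseudomonoid. Lax-type morphisms out of the latter are the wrong notion.

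With the first description, the operations you need are composites of a weak functor with a strong one, and whiskerings of monoidal transformations between strong functors by weak functors. These live in a tricategory with weak 1-cells, not in the standard tricategory of monoidal bicategories and strong monoidal pseudofunctors. They are easy, but not an off-the-shelf fact. With the second description, postcomposition genuinely happens in the standard tricategory. However, identifying strong maps from the free pseudomonoid into a fully weak $\mathcal{C}$ with monoids in $\mathcal{C}$ is precisely where Lack's coherence enters, and at that point you are on the paper's route.
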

\begin{proof}
The first statement is a popular special case of the coherence theorem for tricategories \cite{gps_coherence}. The second statement can for 
instance be derived via \cite{lackpsm}.
\end{proof}

Theorem~\ref{thm_bito2cat} allows us to reduce the proof of the theorems in the Introduction to statements about
$2$-categories and $2$-functors between them. Similarly, the higher algebra in the $\infty$-category of bicategories can be reduced to the higher algebra in the $\infty$-category of 2-categories as well by the following theorem of \cite{lack2catms}.

\begin{theorem}\label{thm_main_11}
The canonical inclusion $\twocat\rightarrow\mathrm{BiCat}$ is a right Quillen functor of associated model categories which induces an 
equivalence $\mathrm{Ho}_{\infty}(\twocat)\simeq\mathrm{Ho}_{\infty}(\mathrm{BiCat})$ of homotopy $\infty$-categories.\qed
\end{theorem}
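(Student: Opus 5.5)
We would deduce Theorem~\ref{thm_main_11} from Lack's model structure on $\twocat$ and the corresponding model structure on $\mathrm{BiCat}$ (bicategories with strict or normal pseudofunctors), using that the strictification functor $\mathrm{st}\colon\mathrm{BiCat}\rightarrow\twocat$ is left adjoint to the inclusion $i\colon\twocat\rightarrow\mathrm{BiCat}$. The plan is to show that $(\mathrm{st}\dashv i)$ is a Quillen equivalence. A Quillen equivalence induces an equivalence of underlying $\infty$-categories, by the standard identification of $\mathrm{Ho}_{\infty}$ of a model category with its Dwyer--Kan localisation at the weak equivalences, which is functorial for derived Quillen functors.

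\textbf{Step 1: $i$ is right Quillen.} In both model structures the weak equivalences are the biequivalences and the fibrations are the equivalence-lifting maps, that is, maps with lifts of equivalences in the underlying $1$-category and of isomorphisms of $2$-cells. Since a strict 2-functor is a biequivalence, respectively a fibration, as a 2-functor exactly when it is one as a map of bicategories, $i$ preserves fibrations and trivial fibrations.

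\textbf{Step 2: the derived unit and counit.} Every object of $\mathrm{BiCat}$ is fibrant, and a bicategory $\mathcal{B}$ is cofibrant precisely when its underlying category of 1-cells is free. For any bicategory $\mathcal{B}$ the unit $\eta_{\mathcal{B}}\colon\mathcal{B}\rightarrow i\,\mathrm{st}(\mathcal{B})$ is a biequivalence; this is the classical strictification theorem for bicategories (Mac Lane--Paré, Gordon--Power--Street). Since every 2-category is fibrant, the derived counit at a fibrant $\mathcal{A}$ is the composite $\mathrm{st}(Q\,i\mathcal{A})\rightarrow\mathrm{st}(i\mathcal{A})\rightarrow\mathcal{A}$. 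By the 2-out-of-3 property applied to the triangle identity $\varepsilon_{\mathcal{A}}\circ\mathrm{st}(\eta)$ and $i\varepsilon\circ\eta_{i}=\mathrm{id}$, and since $\eta$ is a biequivalence, $i(\varepsilon_{\mathcal{A}})$ is a biequivalence. Because $i$ reflects biequivalences, $\varepsilon_{\mathcal{A}}$ is one too. We must also check that $\mathrm{st}$ sends the cofibrant replacement $Q\,i\mathcal{A}\rightarrow i\mathcal{A}$, a biequivalence, to a biequivalence. This follows once more from the unit being a biequivalence on all objects, together with 2-out-of-3.

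\textbf{Step 3: conclude.} Having a Quillen equivalence, the induced functor $\mathrm{Ho}_{\infty}(\twocat)\rightarrow\mathrm{Ho}_{\infty}(\mathrm{BiCat})$ is an equivalence. Since every object is fibrant, this functor is simply the restriction of $i$ along the localisations.

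The main obstacle is Step 1 together with the setup. It requires fixing precisely which model structure on $\mathrm{BiCat}$ is meant, since the morphisms must be strict or normal homomorphisms for limits and colimits to exist. It also requires verifying that Lack's description of fibrations and cofibrations transfers, which is exactly the content of \cite{lack2catms}. The first thing we would try is to cite Lack's construction of the model structure on $\mathrm{BiCat}$ directly from \cite{lack2catms}, and to verify only the coherence input (the unit being a biequivalence) by hand.
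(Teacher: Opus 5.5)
\textbf{There is a genuine gap in Step 2.} Lack's model structure lives on bicategories and \emph{strict} homomorphisms. On that category the left adjoint of the fully faithful inclusion $i$ is not the Mac Lane--Par\'e/Gordon--Power--Street strictification. It is the reflection $L$ that forces associators and unitors to become identities.

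The classical $\mathrm{st}\,\mathcal{B}$, whose 1-cells are strings of composable 1-cells, is left adjoint to $i$ only once pseudofunctors are allowed as morphisms. Its unit $f\mapsto (f)$ is not strict, since $g\circ f\mapsto (g\circ f)\neq (f,g)$. Moreover, bicategories with pseudofunctors lack the limits and colimits a model category needs, which is why Lack uses strict homomorphisms.

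The unit of $L$ is \emph{not} a biequivalence in general. Take $M$ a skeletal monoidal category with a non-identity associator component $\alpha_{a,b,c}$, e.g.\ a skeletal 2-group with non-trivial $k$-invariant; that component is then an automorphism. The strict unit $\Sigma M\rightarrow iL(\Sigma M)$ must send $\alpha_{a,b,c}$ to an identity 2-cell, so it identifies $\alpha_{a,b,c}$ with $\mathrm{id}$ and is not even locally faithful. Hence your claim that the unit is a biequivalence for every bicategory is false. The citation of the classical coherence theorem does not give it, and both 2-out-of-3 arguments built on it are unfounded as written.

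What is true, and what carries the whole content of the theorem, is that $\eta_{\mathcal{B}}\colon\mathcal{B}\rightarrow iL\mathcal{B}$ is a biequivalence for \emph{cofibrant} $\mathcal{B}$. Proving this needs two things:
\begin{enumerate}
\item A correct description of cofibrant bicategories. ``The underlying category of 1-cells is free'' does not make sense for a bicategory. The analogue is that the 1-cells are freely generated from a graph as bracketed composites with identities.
\item A coherence argument showing that, for such $\mathcal{B}$, collapsing the constraint 2-cells is a local equivalence.
\end{enumerate}

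Once that is in hand, the rest is short and your counit discussion is unnecessary. Since $i$ is fully faithful, the counit $Li\mathcal{A}\rightarrow\mathcal{A}$ and $\eta_{i\mathcal{A}}$ are isomorphisms. Since $i$ preserves and reflects biequivalences and every object is fibrant, Hovey's criterion \cite[Corollary 1.3.16]{hovey} gives the Quillen equivalence: it only requires the unit to be a weak equivalence on cofibrant objects. Step 3 then follows as you say.

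This cofibrant-unit statement is exactly Lack's result, which the paper does not reprove but simply cites via \cite{lack2catms}. So you have misplaced the main obstacle. It is not Step 1, which is immediate from the shared description of fibrations and weak equivalences. It is the cofibrant-unit statement in Step 2.
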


\begin{remark}\label{rem_defbicatmon}
Although for a given monoidal bicategory $\mathcal{C}$, the notion of a pseudomonoid therein is a natural notion to consider, it appears 
that neither the bicategory $\mathrm{Mon}(\mathcal{C})$ nor an associated trifunctor $\mathrm{Mon}(-)$ have a standard definition in the 
literature. We will hence take the freedom to define it in the way most suitable for our purposes and adopt the 2-categorical point of view 
of \cite{daystreet}. That, is, we define $\mathrm{Mon}(\mathcal{C}):=\mathrm{PsMon}(\bar{\mathcal{C}})$ the category of pseudomonoids in 
its associated Gray monoid. The latter will be defined below.
\end{remark}

\subsection*{The homotopy theory of Gray monoids}

In this section we consider the category $2$-Cat equipped with its canonical model structure introduced by Lack \cite{lack2catms}. 
We recall from loc.\ cit.\ that $\mathrm{Gray}=(\twocat,\otimes_{\mathrm{Gr}})$ becomes a monoidal model category in this way.
Let $\mathrm{GrMon}$ denote the category of Gray monoids and (strict) Gray monoidal 2-functors. It arises as the category of algebras for 
the free monoid monad on $\mathrm{Gray}$ and hence comes equipped with a (monadic) adjunction as follows.
\begin{align}\label{equ_propgrmmodstr}
\xymatrix{
\mathrm{GrMon}\ar@<-.5ex>[r]_(.55)U & \twocat\ar@<-.5ex>[l]_(.45)F
}
\end{align}

\begin{proposition}\label{propgrmmodstr}
The adjunction (\ref{equ_propgrmmodstr}) allows to 
transfer the canonical model structure on $\twocat$ to a model structure on $\mathrm{GrMon}$ such that a morphism $f$ in
$\mathrm{GrMon}$ is a (trivial) fibration if and only if $U(f)$ is so in $\twocat$.

\end{proposition}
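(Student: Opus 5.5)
We will deduce the statement from the standard transfer theorem for cofibrantly generated model categories along a right adjoint, in the form of Kan's transfer criterion or equivalently the Schwede--Shipley criterion for algebras over a monad on a monoidal model category. Lack's model structure on $\twocat$ is cofibrantly generated, with a finite set $I$ of generating cofibrations and a finite set $J$ of generating trivial cofibrations. Their domains are finite 2-categories and hence small. The adjunction (\ref{equ_propgrmmodstr}) is monadic, and the free monoid monad $T=UF$ preserves filtered colimits. The reason is that $\otimes_{\mathrm{Gr}}$ is closed, so it preserves colimits in each variable, and $T(X)=\coprod_{n\geq 0}X^{\otimes_{\mathrm{Gr}} n}$. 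It follows that $U$ preserves filtered colimits, so the domains of $F(I)$ and $F(J)$ are small in $\mathrm{GrMon}$. Since $\mathrm{GrMon}$ is locally presentable, being the algebras for an accessible monad on a locally presentable category, it is complete and cocomplete. The small object argument therefore applies to $F(I)$ and $F(J)$.

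The remaining condition of the transfer theorem is acyclicity: every relative $F(J)$-cell complex in $\mathrm{GrMon}$ must be sent by $U$ to a weak equivalence. We would try two approaches, in order.

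\emph{First approach: path objects.} We use Quillen's path object argument. Every object of $\twocat$ is fibrant in Lack's model structure, so every Gray monoid is fibrant in the transferred sense. It then suffices to exhibit a functorial path object for Gray monoids. The candidate is the pseudo-path object $A^{\mathrm{Iso}}=\mathrm{Ps}(\mathbf{I},A)$, where $\mathbf{I}$ is the free-living isomorphism and $\mathrm{Ps}$ denotes 2-functors with pseudonatural transformations. This is Lack's path object in $\twocat$. We need to check the following:
\begin{itemize}
\item the Gray monoid structure of $A$ lifts to $A^{\mathrm{Iso}}$;
\item the diagonal $A\to A^{\mathrm{Iso}}$ and the evaluation $A^{\mathrm{Iso}}\to A\times A$ are strict Gray monoidal 2-functors.
\end{itemize}
The lift should come from the fact that the internal hom $[\mathbf{I},-]$ of the Gray tensor product is lax monoidal. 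This gives a natural map $[\mathbf{I},A]\otimes_{\mathrm{Gr}}[\mathbf{I},A]\to[\mathbf{I}\otimes_{\mathrm{Gr}}\mathbf{I},A\otimes_{\mathrm{Gr}}A]$, which we then restrict along a diagonal $\mathbf{I}\to\mathbf{I}\otimes_{\mathrm{Gr}}\mathbf{I}$. Here $[-,-]$ denotes the internal hom for the Gray tensor product, which is the pseudo-functor 2-category. The delicate point is that this diagonal must be cocommutative and coassociative on the nose. It is enough that $\mathbf{I}$ is a strict comonoid in $\mathrm{Gray}$, for example because $\mathbf{I}\otimes_{\mathrm{Gr}}\mathbf{I}\to\mathbf{I}\times\mathbf{I}$ admits such a section. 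This verification is where I expect the main obstacle to lie.

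\emph{Second approach: monoid axiom.} If the diagonal above turns out not to be strict, we fall back on the Schwede--Shipley theorem. It requires that $\mathrm{Gray}$ satisfy the monoid axiom and that all objects be cofibrant or at least that the relevant pushouts be controlled. Lack shows that $\mathrm{Gray}$ is a monoidal model category. What remains is to show that $X\otimes_{\mathrm{Gr}}j$ is a weak equivalence for $j\in J$ and arbitrary $X$, and that transfinite composites of pushouts of such maps are weak equivalences. The generating trivial cofibrations are injective-on-objects equivalences, and tensoring with $X$ preserves these: $X\otimes_{\mathrm{Gr}}-$ preserves biequivalences between arbitrary 2-categories, since the Gray tensor product is invariant under biequivalence. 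Pushouts and transfinite composites of injective-on-objects biequivalences are again biequivalences by Lack's analysis of trivial cofibrations.

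In either case, the transferred weak equivalences and fibrations are created by $U$, as the statement asserts.
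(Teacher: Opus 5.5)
Your preliminary reductions are fine: the free-monoid monad is finitary, objects are small, and $\mathrm{GrMon}$ is locally presentable. Your second approach is also exactly the paper's route, namely Schwede--Shipley's Theorem~4.1 applied to the monoid axiom for $\mathrm{Gray}$. The paper, however, does not prove the monoid axiom; it cites \cite[Theorem 7.7]{lack2catms}.

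\textbf{The gap in your monoid-axiom argument.} You claim that pushouts and transfinite composites of injective-on-objects biequivalences are biequivalences. The transfinite part is fine, but the pushout part is false in $\twocat$. Unlike in $\mathrm{Cat}$, Lack's cofibrations must also be injective (indeed free) on underlying $1$-cells. So injective-on-objects biequivalences are in general not trivial cofibrations, and Lack's analysis of trivial cofibrations says nothing about $j\otimes_{\mathrm{Gr}}X$ for non-cofibrant $X$.

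Here is a counterexample. The $2$-functor $\mathrm{Cyl}\to\mathbf{2}$ collapsing the free-living invertible $2$-cell $\phi\colon x\Rightarrow y$ is bijective on objects and a biequivalence. Let $C$ have objects $0,1$ and a single $1$-cell $z\colon 0\to 1$ with $\mathrm{Aut}(z)\cong\mathbb{Z}$, and map $\mathrm{Cyl}\to C$ by sending $\phi$ to a generator. The pushout kills that generator, so $C\to C\sqcup_{\mathrm{Cyl}}\mathbf{2}$ is not locally faithful.

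A proof of the monoid axiom must therefore use the specific shape of the maps $j\otimes_{\mathrm{Gr}}X$. Let $E$ be the free-living adjoint equivalence. For $j\colon K\to L$ in $J$, one can contract $L$ onto $K$ by a homotopy $E\otimes_{\mathrm{Gr}}L\to L$ that is constant on $K$. Tensoring with $X$ then exhibits $j\otimes_{\mathrm{Gr}}X$ as a strong deformation retract. This structure survives pushouts because $E\otimes_{\mathrm{Gr}}-$ preserves colimits, and biequivalences in $\twocat$ are closed under transfinite composition. Alternatively, simply cite Lack, as the paper does.

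\textbf{The first approach does not close either.} With $\mathbf{I}$ the free-living isomorphism, the strict comonoid structure is actually harmless: a $2$-functor out of $\mathbf{I}$ is just a strict isomorphism, and $u\mapsto(u,1)(1,u)$ is visibly coassociative. The real problem is that $\mathrm{Ps}(\mathbf{I},A)$ is not a path object for Lack's model structure. The map $\mathrm{Ps}(\mathbf{I},A)\to A\times A$ is not an equifibration. An object $u\colon a\cong b$ cannot be lifted along an equivalence $(1_a,e)\colon(a,b)\to(a,b')$ when $b'$ is equivalent but not isomorphic to $b$.

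You need $E$ instead of $\mathbf{I}$, and then the strictness issue you flag disappears for a different reason. Via $\otimes_{\mathrm{Gr}}\to\times$, the $2$-categories $E$, $E\otimes_{\mathrm{Gr}}E$ and $E^{\otimes_{\mathrm{Gr}}3}$ are all biequivalent to the terminal $2$-category, so their hom-categories are indiscrete. Hence the assignment $f\mapsto(f,1)(1,f)$, $g\mapsto(g,1)(1,g)$, with unit and counit sent to the unique available $2$-cells, is a $2$-functor $E\to E\otimes_{\mathrm{Gr}}E$. Coassociativity and counitality then only need checking on $1$-cells, where they hold on the nose. This makes $[E,-]$ lax monoidal, and the path-object argument would then go through. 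As written, though, neither of your two approaches is complete.
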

\begin{proof}
To construct this model structure it suffices to show that the monoidal model 
category $\mathrm{Gray}$ satisfies the monoid axiom \cite[Definition 3.3, Theorem 4.1]{schwedeshipley_mon}.
This is shown in \cite[Theorem 7.7]{lack2catms}. The second statement follows directly from \cite[Theorem 4.1.(3)]{schwedeshipley_mon}.

\end{proof}

\begin{remark}
The model structure on $\mathrm{GrMon}$ from Proposition~\ref{propgrmmodstr} is the single object version of Lack's model structure on
the category of Gray categories \cite{lackgraycatms}. 
\end{remark}

We hence may assign a homotopy $\infty$-category $\mathrm{Ho}_{\infty}(\mathrm{GrMon})$ and $\mathrm{Ho}_{\infty}(\twocat)$ 
to the two corresponding model categories each. Next, we want to define a symmetric monoidal functor

\begin{align}\label{equdefpsmoninfty}
\mathrm{Alg}_{\mathsf{E}_1}(-)\colon\mathrm{Ho}_{\infty}(\mathrm{GrMon})^{\times}\rightarrow\mathrm{Ho}_{\infty}(\twocat)^{\times}
\end{align}
that assigns to a Gray monoid the 2-category of pseudomonoids in it.

\begin{remark}\label{rem_psmon}
Ideally, we would like to use the fact that Gray is closed monoidal to define the $\infty$-categorical functor 
(\ref{equdefpsmoninfty}) entirely by the means of abstract homotopy theory. For instance, let us recall Lack's ``coherent'' 
characterization of pseudomonoids in a Gray monoid as Gray functors out of the walking pseudomonoid $\mathrm{PsM}$ \cite{lackpsm}. Thus, 
if the model category $\mathrm{GrMon}$ was $\mathrm{Gray}$-enriched, we could define the functor (\ref{equdefpsmon}) simply as the
Gray-enriched functor corepresented by $\mathrm{PsM}$. Indeed, it is not hard to show that the Gray monoid $\mathrm{PsM}$ is cofibrant, and 
so its associated corepresentable would be a Quillen right adjoint automatically. This would essentially finish the discussion. However, 
the (model) category $\mathrm{GrMon}$ appears to fail to be Gray-enriched. This failure boils down to the fact that 
the functorial diagonal $\mathcal{C}\rightarrow\mathcal{C}\times\mathcal{C}$ of the cartesian structure on $\twocat$ only lifts to a 
pseudofunctor $\mathcal{C}\rightarrow\mathcal{C}\otimes_{\mathrm{Gr}}\mathcal{C}$ 
rather than to an actual 2-functor. One can hence make precise the fact that the model category $\mathrm{GrMon}$ is weakly enriched in
$\mathrm{Gray}$ instead; this however complicates matters rather than to simplify them. We hence will essentially follow the same playbook, 
but in a little more hands-on fashion.

\end{remark}

In the following, we define the 2-category $\mathrm{PsMon}(\mathcal{C})$ of pseudomonoids in a Gray monoid $\mathcal{C}$. Its objects are 
the pseudomonoids as defined in \cite[Section 3]{daystreet}. Here, given two morphisms $f_i\colon C_i\rightarrow D_i$, $i=0,1$ in a Gray 
monoid $\mathcal{C}$ with multiplication $\otimes\colon\mathcal{C}\otimes_{\mathrm{Gr}}\mathcal{C}\rightarrow\mathcal{C}$, we adopt 
the convention to denote by $f\otimes g\colon C_1\otimes C_2\rightarrow D_1\otimes D_2$ the composition $(f\otimes 1)\circ(1\otimes g)$ 
(rather than the non-equal but isomorphic choice $(1\otimes g)\circ(f\otimes 1)$).

\begin{definition}\label{def_psmonobj}
Let $\mathcal{C}$ be a Gray monoid with multiplication $\otimes\colon\mathcal{C}\otimes_{\mathrm{Gr}}\mathcal{C}$ and unit
$I\in\mathcal{C}$. The 2-category $\mathrm{PsMon}(\mathcal{C})$ of pseudomonoids in $\mathcal{C}$ is given by the following data (see
e.g.\ \cite[Section 2]{mccrudden} for more details).
\begin{enumerate}
\item Its objects are the pseudomonoids in $\mathcal{C}$; that is, objects $A\in\mathcal{C}$ together with morphisms
$m\colon A\otimes A\rightarrow A$ and $\eta\colon I\rightarrow A$ in $\mathcal{C}$, as well as invertible 2-cells
\begin{align*}
\xymatrix{
A\otimes A\otimes A\ar[r]^{1\otimes m}\ar[d]_{m\otimes 1}\ar@{}[dr]|{\overset{a}{\Rightarrow}} & A\otimes A\ar[d]^m\\
A\otimes A\ar[r]_{m} & A
} & & 
\xymatrix{
A\ar[r]^{\eta\otimes 1}\ar[dr]_1\ar@{}[dr]^{\overset{l}{\Leftarrow}} & A\otimes A\ar[d]|m & A\ar[l]_{1\otimes\eta}\ar[dl]^{1}\ar@{}[dl]_{\overset{r}{\Rightarrow}} \\
& A &
}
\end{align*}	
such that two equations hold that express higher associativity and unitality of the associator $a$.
\item Given two pseudomonoids $\vec{A}=(A,m_A,\eta_A,a_A,l_A,r_A)$ and $\vec{B}=(B,m_B,\eta_B,a_B,l_B,r_B)$, a morphism
$\vec{f}\colon\vec{A}\rightarrow\vec{B}$ is a morphism $f\colon A\rightarrow B$ in $\mathcal{C}$ together with invertible 2-cells
\begin{align*}
\xymatrix{
A\otimes A\ar[r]^{f\otimes f}\ar[d]_{m_A}\ar@{}[dr]|{\overset{\chi_f}{\Rightarrow}} & B\otimes B\ar[d]^{m_B} \\
A\ar[r]_f & B
} & & 
\xymatrix{
I\ar[r]^{\eta_A}\ar[dr]_{\eta_B}\ar@{}[dr]^{\overset{\iota_f}{\Rightarrow}} & A\ar[d]^f \\
 & B
}
\end{align*}
subject to coherence conditions.
\item Given two morphisms $\vec{f}=(f,\chi_f,\iota_f)$ and $\vec{g}=(g,\chi_g,\iota_g)$ from $\vec{A}$ to $\vec{B}$, a 2-cell 
$\alpha\colon\vec{f}\rightarrow\vec{g}$ is a 2-cell $\alpha\colon f\rightarrow g$ in $\mathcal{C}$ subject to coherence conditions.
\end{enumerate}
Vertical and horizontal composition of cells as well as identities are directly induced from the vertical and horizontal composition (and 
their identities) in $\mathcal{C}$.
\end{definition}

\begin{remark}
Definition~\ref{def_psmonobj} is also the single object version of the Gray category $\mathbf{PsMnd}(\mathcal{K})$ of pseudomonads in a 
Gray category $\mathcal{K}$ defined by Gambino and Lobbia \cite[Theorem 2.5]{gambinolobbia}. Hence, the fact that
$\mathrm{PsMon}(\mathcal{C})$ forms a 2-category for any Gray monoid $\mathcal{C}$ can be directly derived from loc.\ cit\ as well. 
Therefore, note that when we consider a given Gray monoid $\mathcal{C}$ as a Gray category with a single object $\ast$, then
$\mathrm{PsMon}(\mathcal{C})\subseteq\mathbf{PsMnd}(\mathcal{C})$ is spanned by all pseudomonads on $\ast$
\cite[Definition 1.5]{gambinolobbia}, all pseudomonad morphisms $(F,\phi)$ where $F$ is the identity $1_{\ast}$ on $\ast$
\cite[Definition 2.1]{gambinolobbia}, all pseudomonad transformations $(p,\bar{p})$ where $p$ is the identity on $1_{\ast}$ 
\cite[Definition 2.3]{gambinolobbia}, and all pseudomonad modifications $\alpha$ which themselves are the identity $1_{1_{\ast}}$ 
\cite[Definition 2.4]{gambinolobbia}. The latter are automatically trivial.
\end{remark}

Let $\mathcal{C}$ and $\mathcal{D}$ be Gray monoids. Every Gray functor $f\colon\mathcal{C}\rightarrow\mathcal{D}$ induces a 2-functor
$\mathrm{PsMon}(\mathcal{C})\rightarrow\mathrm{PsMon}(\mathcal{D})$. This action defines a functor 
\begin{align}\label{equdefpsmon}
\mathrm{PsMon}(-)\colon\textrm{GrMon}\rightarrow \twocat.
\end{align}

\begin{proposition}\label{prop_psmonrqui}
The functor (\ref{equdefpsmon}) preserves trivial fibrations whenever $\mathrm{GrMon}$ is equipped with the model 
structure from Proposition~\ref{propgrmmodstr}. It furthermore preserves finite products.
\end{proposition}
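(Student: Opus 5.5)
We treat the two claims separately; both reduce to explicit descriptions of trivial fibrations and products in $\twocat$ and $\mathrm{GrMon}$.

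\textbf{Finite products.} The terminal Gray monoid is the terminal 2-category $\ast$, and $\mathrm{PsMon}(\ast)$ is terminal, since $\ast$ has exactly one pseudomonoid structure with identity data. For two Gray monoids $\mathcal{C},\mathcal{D}$, the product in $\mathrm{GrMon}$ is created by the monadic forgetful functor $U$. It is the 2-category $\mathcal{C}\times\mathcal{D}$ with componentwise multiplication, namely the composite
\[(\mathcal{C}\times\mathcal{D})\otimes_{\mathrm{Gr}}(\mathcal{C}\times\mathcal{D})\rightarrow(\mathcal{C}\otimes_{\mathrm{Gr}}\mathcal{C})\times(\mathcal{D}\otimes_{\mathrm{Gr}}\mathcal{D})\rightarrow\mathcal{C}\times\mathcal{D}.\]
Every datum of Definition~\ref{def_psmonobj} consists of objects, 1-cells and 2-cells together with equations built from horizontal and vertical composition and $\otimes$. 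All of these are computed componentwise in $\mathcal{C}\times\mathcal{D}$. So a pseudomonoid, a pseudomonoid morphism, or a 2-cell in $\mathcal{C}\times\mathcal{D}$ is the same thing as a pair of such in $\mathcal{C}$ and $\mathcal{D}$. This yields an isomorphism $\mathrm{PsMon}(\mathcal{C}\times\mathcal{D})\cong\mathrm{PsMon}(\mathcal{C})\times\mathrm{PsMon}(\mathcal{D})$. We then check that this isomorphism is the canonical comparison map induced by the two projections, which is immediate from functoriality.

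\textbf{Trivial fibrations.} By Proposition~\ref{propgrmmodstr}, a trivial fibration $p\colon\mathcal{C}\rightarrow\mathcal{D}$ in $\mathrm{GrMon}$ is a strict Gray monoidal 2-functor whose underlying 2-functor is a trivial fibration in Lack's model structure. We recall from \cite{lack2catms} that this means: $p$ is surjective on objects, full on 1-cells (every 1-cell $pC\rightarrow pC'$ lifts to a 1-cell $C\rightarrow C'$), and fully faithful on 2-cells (bijective on 2-cells with given lifted boundary). We then verify these three lifting properties for $\mathrm{PsMon}(p)$.

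\emph{Surjectivity on objects.} Given a pseudomonoid $(B,m,\eta,a,l,r)$ in $\mathcal{D}$, we lift in stages. First pick $A$ with $pA=B$. Because $p$ is strict monoidal, $p(A\otimes A)=B\otimes B$ and $p(I)=I$, so $m$ and $\eta$ lift to 1-cells $\tilde m,\tilde\eta$ by fullness on 1-cells. The boundaries of the 2-cells $a,l,r$ are built from $\tilde m,\tilde\eta$ and Gray structure, and $p$ preserves all of these strictly. Hence $a,l,r$ lift uniquely by full faithfulness on 2-cells. The lifts are again invertible, since the inverses lift too and the inverse equations hold by faithfulness. The two coherence equations hold upstairs because they hold after applying $p$ and $p$ is faithful on 2-cells.

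\emph{Fullness on 1-cells and 2-cells.} Morphisms of pseudomonoids are handled the same way: lift $f$ by fullness, then lift $\chi_f,\iota_f$ uniquely and deduce the coherence conditions by faithfulness. For 2-cells, given pseudomonoid morphisms $\vec f,\vec g$ upstairs and a 2-cell $p\vec f\Rightarrow p\vec g$ in $\mathrm{PsMon}(\mathcal{D})$, the underlying 2-cell lifts uniquely, and its coherence conditions hold by faithfulness. Uniqueness of this lift gives faithfulness of $\mathrm{PsMon}(p)$ on 2-cells. Together these show that $\mathrm{PsMon}(p)$ is surjective on objects, full on 1-cells and fully faithful on 2-cells, hence a trivial fibration.

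The main point of care is the characterization of trivial fibrations in Lack's model structure and checking that \emph{strict} monoidality of $p$ makes the boundaries match on the nose. This is exactly where using Gray monoids rather than bicategories matters. Given that, everything else is routine.
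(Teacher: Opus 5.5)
Your proposal is correct and takes the same route as the paper. Both use Lack's characterization of trivial fibrations in $\twocat$ (surjective on objects, full on 1-cells, fully faithful on 2-cells), lift objects, 1-cells and 2-cells of $\mathrm{PsMon}(\mathcal{D})$ using strict monoidality of $p$, and prove product preservation componentwise. The difference is only that you spell out the verifications the paper calls straightforward: invertibility and coherence of the lifted 2-cells via faithfulness, and lifting morphisms between arbitrary pseudomonoids upstairs.
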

\begin{proof}
By \cite{lack2catms}, the trivial fibrations in $\twocat$ are precisely the 2-functors that are surjective on objects and locally 
surjective-on-objects equivalences of hom-categories. By Proposition~\ref{propgrmmodstr}, the trivial fibrations in $\mathrm{GrMon}$ are 
precisely those Gray-functors whose underlying 2-functor is a trivial fibration of 2-categories. Thus, let
$p\colon\mathcal{C}\rightarrow\mathcal{D}$ be a trivial fibration of Gray monoids. To show that
$p_{\ast}\colon\mathrm{PsMon}(\mathcal{C})\rightarrow\mathrm{PsMon}(\mathcal{D})$ is again a trivial fibration of 2-categories, we are to 
show, first, that every pseudomonoid $A$ in $\mathcal{D}$ can be lifted to a pseudomonoid $\bar{A}$ in $\mathcal{C}$ such that
$p_{\ast}(\bar{A})=A$, second, that every morphism $f\colon A\rightarrow B$ of pseudomonoids in $\mathcal{D}$ can be lifted to a morphism 
$\bar{f}\colon \bar{A}\rightarrow\bar{B}$ of pseudomonoids in $\mathcal{C}$ such that $p_{\ast}(\bar{f})=f$, and, third, that $p_{\ast}$ 
is locally fully faithful. All three statements are a straightforward verification.

The fact that $\mathrm{PsMon}(-)$ preserves finite products is similarly straightforward, given that all relevant operations are defined 
componentwise.
\end{proof}

%


\begin{remark}
We note that the Gray tensor product on $\twocat$ can be lifted to a symmetric monoidal structure on $\mathrm{GrMon}$ by  Part 2.(a) of
Corollary~\ref{cor_main1cat}. The fact that the functor (\ref{equdefpsmon}) preserves finite products as stated in 
Proposition~\ref{prop_psmonrqui} is a higher categorical adjustment of the apparent failure of
$\mathrm{PsMon}(-)\colon(\textrm{GrMon},\otimes_{\mathrm{Gr}})\rightarrow(\twocat,\otimes_{\mathrm{Gr}})$ to be
symmetric monoidal, or even lax monoidal for that matter. Indeed, we only have a zig-zag
\[\mathrm{PsMon}(\mathcal{C}\otimes_{\mathrm{Gr}}\mathcal{D})\xrightarrow{\sim}\mathrm{PsMon}(\mathcal{C}\times\mathcal{D})\cong\mathrm{PsMon}(\mathcal{C})\times\mathrm{PsMon}(\mathcal{D})\xleftarrow{\sim}\mathrm{PsMon}(\mathcal{C})\otimes_{\mathrm{Gr}}\mathrm{PsMon}(\mathcal{D})\]
of natural weak equivalences rather than a single arrow. Here, the outer two weak equivalences exist by Proposition~\ref{prop_psmonrqui} 
together with the fact that all 2-categories are fibrant, and that there is a natural pointwise trivial fibration
$\otimes_{\mathrm{Gr}}\rightarrow\times$. Indeed, much of the discussion in this section is owed to the fact that the Gray tensor product
$\otimes_{\mathrm{Gr}}$ is not cartesian, but naturally weakly equivalent to the cartesian product. This makes the constructions we aim to 
consider only homotopy-coherent rather than strict. This in turn makes it natural to work in the underlying $\infty$-categories.

\end{remark}

\begin{corollary}\label{cor_inftypsmon}
The functor $\mathrm{PsMon}(-)\colon\mathrm{GrMon}\rightarrow \twocat$ descends to a cartesian monoidal functor
\begin{align}\label{equ_inftypsmon}
\mathrm{Alg}_{\mathsf{E}_1}(-):=\mathrm{Ho}_{\infty}(\mathrm{PsMon}(-))\colon\mathrm{Ho}_{\infty}(\mathrm{GrMon})^{\times}\rightarrow\mathrm{Ho}_{\infty}(\twocat)^{\times}
\end{align}
on underlying $\infty$-categories.
\end{corollary}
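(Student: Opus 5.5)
The plan is to derive the $\infty$-functor from Proposition~\ref{prop_psmonrqui} by the standard fact that a functor between model categories preserving weak equivalences between fibrant objects induces a functor on underlying $\infty$-categories. After that, we check that the induced functor preserves finite products and invoke Example~\ref{exple_cartmon}.

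\textbf{Step 1: the functor descends.} In $\twocat$ every object is fibrant. Since fibrations in $\mathrm{GrMon}$ are created by $U$, every Gray monoid is fibrant as well. By Ken Brown's lemma, any functor between model categories that sends trivial fibrations between fibrant objects to weak equivalences also sends all weak equivalences between fibrant objects to weak equivalences. Proposition~\ref{prop_psmonrqui} supplies exactly this for $\mathrm{PsMon}(-)$. Hence $\mathrm{PsMon}(-)$ preserves all weak equivalences in $\mathrm{GrMon}$. It therefore induces a functor on Dwyer--Kan localizations $\mathrm{Ho}_{\infty}(\mathrm{GrMon})\rightarrow\mathrm{Ho}_{\infty}(\twocat)$ by the universal property of the $\infty$-categorical localization.

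\textbf{Step 2: products are preserved.} In a model category all of whose objects are fibrant, the strict product $\mathcal{C}\times\mathcal{D}$ is a homotopy product. Consequently, the image of $\mathcal{C}\times\mathcal{D}$ in the underlying $\infty$-category is the $\infty$-categorical product. This holds for $\mathrm{GrMon}$, where products are computed in $\twocat$ because $U$ is a right adjoint, and it holds for $\twocat$. The same applies to terminal objects. Proposition~\ref{prop_psmonrqui} says $\mathrm{PsMon}(-)$ preserves strict finite products, and both sides of that comparison are homotopy products. Hence the induced functor preserves finite products in the $\infty$-categorical sense, via the canonical comparison map $\mathrm{PsMon}(\mathcal{C}\times\mathcal{D})\rightarrow\mathrm{PsMon}(\mathcal{C})\times\mathrm{PsMon}(\mathcal{D})$, which is an isomorphism.

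\textbf{Step 3: cartesian monoidal structure.} Both $\infty$-categories have finite products: they are presented by model categories, and Step 2 identifies their binary products. Example~\ref{exple_cartmon}, i.e.\ \cite[Corollary 2.4.1.8]{lurieha}, then promotes the finite-product-preserving functor to a symmetric monoidal functor $\mathrm{Ho}_{\infty}(\mathrm{GrMon})^{\times}\rightarrow\mathrm{Ho}_{\infty}(\twocat)^{\times}$.

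\textbf{Main obstacle.} I expect the hardest point to be the justification in Step 2 that strict products of fibrant objects present $\infty$-categorical products in $\mathrm{Ho}_{\infty}$. I would handle it by noting that products are right derived limits, and that for fibrant diagrams of discrete shape no fibrant replacement is needed; alternatively, one can cite the corresponding statement for localizations of categories of fibrant objects. A secondary point is checking that the terminal Gray monoid is sent to the terminal 2-category, which is immediate.
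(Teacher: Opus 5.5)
Your proposal is correct and follows essentially the same route as the paper. Both arguments use Ken Brown's lemma, together with fibrancy of all Gray monoids, to get the induced functor on $\mathrm{Ho}_{\infty}$; both then note that finite products of fibrant objects compute $\infty$-categorical products and are strictly preserved by $\mathrm{PsMon}(-)$ by Proposition~\ref{prop_psmonrqui}. Your Step 3 appeal to \cite[Corollary 2.4.1.8]{lurieha} only spells out what the paper leaves implicit via Example~\ref{exple_cartmon}.
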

\begin{proof}
The functor $\mathrm{PsMon}(-)\colon\textrm{GrMon}\rightarrow \twocat$
preserves trivial fibrations by Proposition~\ref{prop_psmonrqui}. As all objects in $\mathrm{GrMon}$ are fibrant, by Brown's Lemma 
\cite[Lemma 1.1.12]{hovey} it hence preserves all weak equivalences. The functor $\mathrm{PsMon(-)}$ thus induces a right derived functor 
(\ref{equ_inftypsmon}) of $\infty$-categories as stated. Binary products (and hence all finite products) in both
$\mathrm{Ho}_{\infty}(\mathrm{GrMon})$ and $\mathrm{Ho}_{\infty}(\twocat)$ can be computed as corresponding binary ordinary 
categorical products (of fibrant objects) in $\mathrm{GrMon}$ and $\twocat$, respectively. Such are preserved by $\mathrm{PsMon}(-)$ 
by Proposition~\ref{prop_psmonrqui}.
\end{proof}

We are left to relate the abstractly defined $\infty$-category $\mathrm{Ho}_{\infty}(\mathrm{GrMon})$ to the $\infty$-category of 
$\mathsf{E}_1$-algebras in the cartesian monoidal $\infty$-category $\mathrm{Ho}_{\infty}(\twocat)^{\times}$. This is precisely 
given by the following lemma.

\begin{lemma}\label{lemma_grmon=E1}
The forgetful functor $U\colon\mathrm{GrMon}\rightarrow \twocat$ induces a right adjoint
\[\mathrm{Ho}_{\infty}(U)\colon\mathrm{Ho}_{\infty}(\mathrm{GrMon})\rightarrow\mathrm{Ho}_{\infty}(\twocat)\]
that lifts to an equivalence
\[\mathrm{Ho}_{\infty}(U)\colon\mathrm{Ho}_{\infty}(\mathrm{GrMon})\rightarrow \mathrm{Alg}_{\mathsf{E}_1}(\mathrm{Ho}_{\infty}(\twocat)^\times).\]
\end{lemma}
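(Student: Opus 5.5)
The plan is to apply Lurie's rectification theorem for associative algebras in monoidal model categories, \cite[Theorem 4.1.8.4]{lurieha}. It says: if $\mathbf{A}$ is a combinatorial monoidal model category satisfying the monoid axiom (or whose cofibrant objects are flat), and the transferred model structure on $\mathrm{Alg}(\mathbf{A})$ exists, then the canonical functor $N(\mathrm{Alg}(\mathbf{A})^{c})[W^{-1}]\rightarrow\mathrm{Alg}_{\mathsf{E}_1}(N(\mathbf{A}^{c})[W^{-1}]^{\otimes})$ is an equivalence. We apply it with $\mathbf{A}=\mathrm{Gray}=(\twocat,\otimes_{\mathrm{Gr}})$.

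\textbf{Hypotheses.} We check them in order.
\begin{enumerate}
\item Lack's model structure on $\twocat$ is combinatorial, since it is cofibrantly generated on a locally presentable category.
\item $\mathrm{Gray}$ is a monoidal model category \cite{lack2catms}.
\item The monoid axiom holds, \cite[Theorem 7.7]{lack2catms}, as already used in Proposition~\ref{propgrmmodstr}.
\item The transferred structure on $\mathrm{GrMon}$ exists by Proposition~\ref{propgrmmodstr}.
\end{enumerate}
Lurie's statement also asks that cofibrant objects be flat, or a variant of this; I expect the monoid axiom version, \cite[Remark 4.1.8.5/Theorem 4.1.8.4]{lurieha}, to suffice. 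The rectification then gives
\[\mathrm{Ho}_{\infty}(\mathrm{GrMon})\simeq\mathrm{Alg}_{\mathsf{E}_1}(\mathrm{Ho}_{\infty}(\mathrm{Gray})^{\otimes}),\]
compatibly with the forgetful functors. Since every object in $\mathrm{GrMon}$ is fibrant and $U$ preserves fibrations and trivial fibrations, $U$ is right Quillen, and $\mathrm{Ho}_{\infty}(U)$ is a right adjoint with left adjoint the derived free Gray monoid functor.

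\textbf{Main obstacle.} Rectification lands in $\mathsf{E}_1$-algebras for the monoidal $\infty$-category $\mathrm{Ho}_{\infty}(\mathrm{Gray})^{\otimes}$, which comes from the Gray tensor product. We must identify this with the cartesian structure $\mathrm{Ho}_{\infty}(\twocat)^{\times}$. The natural comparison $\mathcal{C}\otimes_{\mathrm{Gr}}\mathcal{D}\rightarrow\mathcal{C}\times\mathcal{D}$ is a pointwise trivial fibration, hence a weak equivalence, and it is natural and compatible with associators and units. So the identity functor $\mathrm{Gray}\rightarrow(\twocat,\times)$ is lax monoidal, with structure maps that are weak equivalences on cofibrant objects. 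It therefore induces a lax monoidal functor of underlying $\infty$-categories whose lax structure maps are equivalences, and hence is strong monoidal. Because the underlying functor is the identity, this is an equivalence of monoidal $\infty$-categories
\[\mathrm{Ho}_{\infty}(\mathrm{Gray})^{\otimes}\simeq\mathrm{Ho}_{\infty}(\twocat)^{\times},\]
which induces an equivalence on $\mathsf{E}_1$-algebras. For the monoidal structure on the localization to be well defined, the tensor product of weak equivalences between cofibrant objects must be a weak equivalence. This holds in any monoidal model category, and the comparison map is compatible with cofibrant replacement.

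\textbf{Assembly.} Composing the two equivalences gives the lift of $\mathrm{Ho}_{\infty}(U)$ to an equivalence onto $\mathrm{Alg}_{\mathsf{E}_1}(\mathrm{Ho}_{\infty}(\twocat)^\times)$. It is compatible with the forgetful functors because both rectification and the monoidal comparison commute with evaluation at the underlying object.

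The delicate points are these:
\begin{enumerate}
\item checking which of Lurie's flatness hypotheses holds for $\mathrm{Gray}$;
\item making the lax-to-strong promotion precise, via \cite[Section 4.1.7]{lurieha} on localizations of monoidal model categories.
\end{enumerate}
If (1) fails, I would fall back on restricting to cofibrant Gray monoids, whose underlying 2-categories are cofibrant by Schwede--Shipley when the unit is cofibrant, and rerun the argument there.
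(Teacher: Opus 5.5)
Your proposal takes essentially the same route as the paper. The paper's proof identifies the monoidal structure that $\mathrm{Gray}$ induces on $\mathrm{Ho}_{\infty}(\twocat)$ with the cartesian one via the natural pointwise trivial fibration $\otimes_{\mathrm{Gr}}\rightarrow\times$, then applies \cite[Theorem 4.1.8.4]{lurieha} and simply asserts that all its hypotheses hold.

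Two small corrections. In the monoid-axiom case, Lurie's theorem asks for left properness and for the cofibrations to be generated by cofibrations between cofibrant objects, not a flatness condition; neither you nor the paper spells these out for Lack's model structure. Also, since the comparison map is $\mathcal{C}\otimes_{\mathrm{Gr}}\mathcal{D}\rightarrow\mathcal{C}\times\mathcal{D}$, it is the identity $(\twocat,\times)\rightarrow\mathrm{Gray}$ that is lax monoidal (your identity functor is oplax). This is harmless because the structure maps are weak equivalences.
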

\begin{proof}
The monoidal structure on $\mathrm{Ho}_{\infty}(\twocat)$ induced by the monoidal model category $\mathrm{Gray}$ is the cartesian 
product by way of the natural pointwise trivial fibration $\otimes_{\mathrm{Gr}}\rightarrow \times$ in $\twocat$.
Thus, the statement follows directly from the rectification theorem of \cite[Theorem 4.1.8.4]{lurieha} since all conditions are satisfied.

\end{proof}


\subsection*{$\mathsf{E}_k$-algebras in the $\infty$-category of 2-categories}

By way of Corollary~\ref{cor_inftypsmon} and Lemma~\ref{lemma_grmon=E1} we have constructed a functor
\[\mathrm{Alg}_{\mathsf{E}_1}(-)\colon\mathrm{Alg}_{\mathsf{E}_1}(\mathrm{Ho}_{\infty}(\twocat)^\times)\rightarrow\mathrm{Ho}_{\infty}(\twocat)^\times\]
of $\infty$-categories that maps an $\mathsf{E}_1$-algebra $A$ -- when presented by a Gray monoid $\mathcal{C}$ -- to the 2-category
$\mathrm{Alg}_{\mathsf{E}_1}(A):=\mathrm{PsMon}(\mathcal{C})$ of pseudomonoids in $\mathcal{C}$.

\begin{corollary}\label{corpsmonmonoidal}
Every $\mathsf{E}_{k+1}$-monoid structure on a 2-category $\mathcal{C}$ in $\mathrm{Ho}_{\infty}(\twocat)^\times$ gives rise to an
$\mathsf{E}_k$-monoid structure on the 2-category $\mathrm{PsMon}(\mathcal{C})$ of pseudomonoids in $\mathcal{C}$ in
$\mathrm{Ho}_{\infty}(\twocat)^\times$.
\end{corollary}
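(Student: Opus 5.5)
The plan is to copy the proof of Corollary~\ref{corinftymain1}, with the functor $\mathrm{Alg}_{\mathsf{E}_1}(-)$ of Corollary~\ref{cor_inftypsmon} (via Lemma~\ref{lemma_grmon=E1}) taking the place of the $\infty$-categorical one from Proposition~\ref{thminftymain}.

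First, I will record the input. By Corollary~\ref{cor_inftypsmon} and Lemma~\ref{lemma_grmon=E1}, we have a functor
\[\mathrm{Alg}_{\mathsf{E}_1}(-)\colon\mathrm{Alg}_{\mathsf{E}_1}(\mathrm{Ho}_{\infty}(\twocat)^\times)\rightarrow\mathrm{Ho}_{\infty}(\twocat)\]
that preserves finite products. The source has finite products by Lemma~\ref{lemma_algfinprod}, and these are computed underlying. By Example~\ref{exple_cartmon} (i.e.\ \cite[Corollary 2.4.1.8]{lurieha}), the functor is therefore symmetric monoidal with respect to the cartesian structures. One point needs care: the equivalence of Lemma~\ref{lemma_grmon=E1} must be compatible with finite products. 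It is, because $\mathrm{Ho}_{\infty}(U)$ is a right adjoint and products of Gray monoids are computed underlying; so the product-preservation of $\mathrm{PsMon}(-)$ transports across it.

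Second, let $\mathcal{C}$ carry an $\mathsf{E}_{k+1}$-monoid structure in $\mathrm{Ho}_{\infty}(\twocat)^\times$, that is, an object of $\mathrm{Alg}_{\mathsf{E}_{k+1}}(\mathrm{Ho}_{\infty}(\twocat)^\times)$. Combining the Additivity Theorem (Theorem~\ref{thm_add}) with Example~\ref{exple_bv} gives
\[\mathrm{Alg}_{\mathsf{E}_{k+1}}(\mathrm{Ho}_{\infty}(\twocat)^\times)\simeq\mathrm{Alg}_{\mathsf{E}_k\otimes_{\mathrm{BV}}\mathsf{E}_1}(\mathrm{Ho}_{\infty}(\twocat)^\times)\simeq\mathrm{Alg}_{\mathsf{E}_k}\big(\mathrm{Alg}_{\mathsf{E}_1}(\mathrm{Ho}_{\infty}(\twocat)^\times)^\times\big).\]
Here the symmetric monoidal structure on $\mathrm{Alg}_{\mathsf{E}_1}$ coming from Proposition~\ref{prop_bv} coincides with the cartesian one, by Lemma~\ref{lemma_algfinprod}. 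So $\mathcal{C}$ becomes an $\mathsf{E}_k$-algebra in $\mathrm{Alg}_{\mathsf{E}_1}(\mathrm{Ho}_{\infty}(\twocat)^\times)^\times$, and its underlying $\mathsf{E}_1$-algebra is presented, through Lemma~\ref{lemma_grmon=E1}, by a Gray monoid $\bar{\mathcal{C}}$.

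Third, I compose the morphisms of $\infty$-operads
\[\mathsf{E}_k\xrightarrow{\mathcal{C}}\mathrm{Alg}_{\mathsf{E}_1}(\mathrm{Ho}_{\infty}(\twocat)^\times)^\times\xrightarrow{\mathrm{Alg}_{\mathsf{E}_1}(-)}\mathrm{Ho}_{\infty}(\twocat)^\times.\]
This is an $\mathsf{E}_k$-algebra in $\mathrm{Ho}_{\infty}(\twocat)^\times$. Its underlying object is $\mathrm{Alg}_{\mathsf{E}_1}(\bar{\mathcal{C}})=\mathrm{PsMon}(\bar{\mathcal{C}})$, which by definition (Remark~\ref{rem_defbicatmon}) is the 2-category of pseudomonoids in $\mathcal{C}$. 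For $k=\infty$ the same argument works, using $\mathsf{E}_\infty\otimes_{\mathrm{BV}}\mathsf{E}_1\simeq\mathsf{E}_\infty$ (stabilization).

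I expect the main obstacle to be the bookkeeping in the first step. Specifically, one has to check that the cartesian monoidal structure on $\mathrm{Alg}_{\mathsf{E}_1}(\mathrm{Ho}_{\infty}(\twocat)^\times)$ used in the Additivity step matches, under Lemma~\ref{lemma_grmon=E1}, the product of Gray monoids that $\mathrm{PsMon}(-)$ preserves. I would settle this by the universal property: both are cartesian structures on equivalent $\infty$-categories, so they are identified by \cite[Corollary 2.4.1.8]{lurieha}. The remaining steps are purely formal.
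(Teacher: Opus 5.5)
Your proposal is correct and follows the paper's proof. The paper observes that the cartesian monoidal functor $\mathrm{Alg}_{\mathsf{E}_1}(-)$ of Corollary~\ref{cor_inftypsmon} lifts to a functor $\mathrm{Alg}_{\mathsf{E}_k}(\mathrm{Alg}_{\mathsf{E}_1}(\mathrm{Ho}_{\infty}(\twocat)^\times))\rightarrow\mathrm{Alg}_{\mathsf{E}_k}(\mathrm{Ho}_{\infty}(\twocat)^\times)$, and identifies the domain with $\mathrm{Alg}_{\mathsf{E}_{k+1}}(\mathrm{Ho}_{\infty}(\twocat)^\times)$ via the Additivity Theorem. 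Your extra bookkeeping (that the pointwise structure on $\mathsf{E}_1$-algebras is the cartesian one, and that Lemma~\ref{lemma_grmon=E1} respects products, which holds automatically for an equivalence) and your separate treatment of $k=\infty$ only make explicit what the paper leaves implicit.
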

\begin{proof}
The functor $\mathrm{Alg}_{\mathsf{E}_1}(-)$ is cartesian monoidal by Corollary~\ref{cor_inftypsmon} and hence lifts to a functor
\[\mathrm{Alg}_{\mathsf{E}_1}(-)\colon\mathrm{Alg}_{\mathsf{E}_k}(\mathrm{Alg}_{\mathsf{E}_1}(\mathrm{Ho}_{\infty}(\twocat)^\times))\rightarrow\mathrm{Alg}_{\mathsf{E}_k}(\mathrm{Ho}_{\infty}(\twocat)^\times).\]
The domain is $\mathrm{Alg}_{\mathsf{E}_{k+1}}(\mathrm{Ho}_{\infty}(\twocat)^\times)$ by the Additivity Theorem (Theorem~\ref{thm_add}).
\end{proof}

Furthermore, we note that the functor
\[\mathrm{PsMon}(-)\colon\textrm{GrMon}\rightarrow \twocat\]
comes equipped with a canonical natural transformation to the forgetful functor $U\colon\textrm{GrMon}\rightarrow \twocat$ from the 
adjunction (\ref{equ_propgrmmodstr}). Indeed, at any given Gray monoid $\mathcal{C}$, we may consider the natural 2-functor
\begin{align}\label{defnattransfmonobase}
\pi_{\mathcal{C}}\colon\mathrm{PsMon}(\mathcal{C})\rightarrow U(\mathcal{C})
\end{align}
that maps a pseudomonoid $\vec{A}\in\mathcal{C}$ to its underlying object $A\in\mathcal{C}$, a morphism
$\vec{f}\colon\vec{A}\rightarrow\vec{B}$ of pseudomonoids to its underlying morphism $f\colon A\rightarrow B$ of underlying objects, and 
a 2-cell $\alpha$ two itself.

\begin{remark}
On the level of objects, the natural transformation $\pi\colon\mathrm{PsMon}(-)\rightarrow U$ is given by precomposition with the
Gray-functor $\{[0]\}\colon\ast\rightarrow\mathrm{PsM}$.
\end{remark}

\begin{corollary}\label{cor_monobasemap}
The natural transformation  (\ref{defnattransfmonobase}) induces a cartesian monoidal natural transformation
\begin{align}\label{equ_monobasemap}
\pi\colon\mathrm{Alg}_{\mathsf{E}_1}(-)\rightarrow U
\end{align}
from the cartesian monoidal functor $\mathrm{Alg}_{\mathsf{E}_1}(-)\colon\mathrm{Alg}_{\mathsf{E}_1}(\mathrm{Ho}_{\infty}(\twocat)^{\times})^{\times}\rightarrow\mathrm{Ho}_{\infty}(\twocat)^{\times}$ to the cartesian monoidal functor
$U\colon\mathrm{Alg}_{\mathsf{E}_1}(\mathrm{Ho}_{\infty}(\twocat)^{\times})^{\times}\rightarrow\mathrm{Ho}_{\infty}(\twocat)^{\times}$.  
\end{corollary}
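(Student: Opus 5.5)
The plan is to mirror the proof of Corollary~\ref{corinftymain2}. The key input is that both functors in question preserve finite products, and that the source and target $\infty$-categories are cartesian. The universal property of cartesian structures then makes any natural transformation between the underlying functors automatically symmetric monoidal, by \cite[Corollary 2.4.1.8]{lurieha}. So the work reduces to two things: producing $\pi$ as a natural transformation of $\infty$-functors
\[\mathrm{Ho}_{\infty}(\mathrm{GrMon})\simeq\mathrm{Alg}_{\mathsf{E}_1}(\mathrm{Ho}_{\infty}(\twocat)^{\times})\rightarrow\mathrm{Ho}_{\infty}(\twocat),\]
and checking that its source and target are product preserving.

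\textbf{Step 1: descend $\pi$.} The strict transformation (\ref{defnattransfmonobase}) is a natural transformation of ordinary functors $\mathrm{PsMon}(-)\Rightarrow U$ from $\mathrm{GrMon}$ to $\twocat$. I view it as a single functor
\[\mathrm{GrMon}\times[1]\rightarrow\twocat.\]
By Corollary~\ref{cor_inftypsmon} and Brown's Lemma, $\mathrm{PsMon}(-)$ preserves all weak equivalences. The forgetful functor $U$ does so by definition of the transferred model structure (Proposition~\ref{propgrmmodstr}). Hence the functor $\mathrm{GrMon}\times[1]\rightarrow\twocat$ sends the weak equivalences of $\mathrm{GrMon}$ (with identities on $[1]$) to weak equivalences. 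It therefore induces a functor
\[\mathrm{Ho}_{\infty}(\mathrm{GrMon})\times\Delta^1\rightarrow\mathrm{Ho}_{\infty}(\twocat),\]
because localization commutes with products with $\Delta^1$ (a standard fact for Dwyer--Kan localizations of relative categories with $[1]$ carrying trivial weak equivalences). This is a natural transformation between $\mathrm{Ho}_{\infty}(\mathrm{PsMon}(-))=\mathrm{Alg}_{\mathsf{E}_1}(-)$ and $\mathrm{Ho}_{\infty}(U)$. Under Lemma~\ref{lemma_grmon=E1}, $\mathrm{Ho}_{\infty}(U)$ is identified with the forgetful functor $U$ out of $\mathrm{Alg}_{\mathsf{E}_1}(\mathrm{Ho}_{\infty}(\twocat)^{\times})$.

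\textbf{Step 2: products.} $\mathrm{Alg}_{\mathsf{E}_1}(-)$ preserves finite products by Corollary~\ref{cor_inftypsmon}. The forgetful functor $U$ preserves finite products by Lemma~\ref{lemma_algfinprod}. Alternatively, $\mathrm{Ho}_{\infty}(U)$ is a right adjoint, as stated in Lemma~\ref{lemma_grmon=E1}, and so preserves limits.

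\textbf{Step 3: lift to the cartesian structures.} Both $\mathrm{Alg}_{\mathsf{E}_1}(\mathrm{Ho}_{\infty}(\twocat)^{\times})$ and $\mathrm{Ho}_{\infty}(\twocat)$ have finite products and carry their cartesian structures (Example~\ref{exple_cartmon}). I apply \cite[Corollary 2.4.1.8]{lurieha} to the finite-product-preserving functor
\[\mathrm{Alg}_{\mathsf{E}_1}(\mathrm{Ho}_{\infty}(\twocat)^{\times})\times\Delta^1\rightarrow\mathrm{Ho}_{\infty}(\twocat).\]
The target of this application should be $\mathrm{Fun}(\Delta^1,\mathrm{Ho}_{\infty}(\twocat))$, which has pointwise products. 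Phrased that way, I apply the corollary to the adjoint functor
\[\mathrm{Alg}_{\mathsf{E}_1}(\mathrm{Ho}_{\infty}(\twocat)^{\times})\rightarrow\mathrm{Fun}(\Delta^1,\mathrm{Ho}_{\infty}(\twocat)).\]
This adjoint preserves products because both of its endpoint evaluations do. The resulting symmetric monoidal functor into $\mathrm{Fun}(\Delta^1,\mathrm{Ho}_{\infty}(\twocat))^{\times}$ is exactly a cartesian monoidal natural transformation $\pi$. Its restrictions to the endpoints recover the cartesian monoidal functors $\mathrm{Alg}_{\mathsf{E}_1}(-)$ and $U$, by uniqueness of cartesian monoidal lifts.

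The main obstacle is Step 1, namely making rigorous that the strict natural transformation descends to the $\infty$-categorical localizations compatibly with the identification of Lemma~\ref{lemma_grmon=E1}. I would handle it through the localization $(\mathrm{GrMon}\times[1])[W^{-1}]\simeq\mathrm{Ho}_{\infty}(\mathrm{GrMon})\times\Delta^1$.
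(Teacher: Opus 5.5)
Your proposal is correct and follows the paper's proof. You descend the strict transformation to homotopy $\infty$-categories because both $\mathrm{PsMon}(-)$ and $U$ preserve weak equivalences, then lift it to a cartesian monoidal transformation via the universal property of cartesian structures \cite[Corollary 2.4.1.8]{lurieha}, as in Corollary~\ref{corinftymain2}. The one difference is cosmetic: you localize the adjoint form $\mathrm{GrMon}\times[1]\rightarrow\twocat$ using that localization commutes with products, whereas the paper views $\pi$ as a functor $\mathrm{GrMon}\rightarrow\twocat^{\Delta^1}$ and uses $\mathrm{Ho}_{\infty}(\mathbb{M}^{\Delta^1})\simeq\mathrm{Ho}_{\infty}(\mathbb{M})^{\Delta^1}$ for combinatorial model categories $\mathbb{M}$.
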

\begin{proof}
For any given combinatorial model category $\mathbb{M}$, the $\infty$-category $\mathrm{Ho}_{\infty}(\mathbb{M})^{\Delta^1}$ of arrows 
in $\mathrm{Ho}_{\infty}(\mathbb{M})$ is given by the homotopy $\infty$-category of the category $\mathbb{M}^{\Delta^1}$ of arrows in
$\mathbb{M}$ when equipped with any of the model structures with pointwise weak equivalences. In particular, we are to show that the 
natural transformation (\ref{defnattransfmonobase}) considered as a functor
\[\pi\colon\mathrm{GrMon}\rightarrow \twocat^{\Delta^1}\]
descends to a natural transformation on homotopy $\infty$-categories. Therefore it suffices to show that $\pi$ preserves weak equivalences 
pointwise, which is precisely given by the fact that both its domain $\mathrm{PsMon}(-)$ and its codomain $U$ preserve weak equivalences.

Furthermore, just as in the proof of Corollary~\ref{corinftymain2}, any natural transformation between two functors that each carry a 
cartesian monoidal structure can be lifted to an essentially unique cartesian monoidal (i.e.\ symmetric monoidal) natural transformation 
itself.
\end{proof}

\begin{corollary}\label{cor_monforget}
Suppose $\mathcal{C}$ is an $\mathsf{E}_{k+1}$-monoid in $\mathrm{Ho}_{\infty}(\twocat)^{\times}$ for some $k\geq 0$, and let 
$U(\mathcal{C})$ be its underlying $\mathsf{E}_k$-monoid. 
Let $\mathrm{PsMon}(\mathcal{C})$ be equipped with the associated $\mathsf{E}_k$-monoidal structure from Corollary~\ref{corpsmonmonoidal}. 
Then the morphism $\pi_{\mathcal{C}}\colon\mathrm{PsMon}(\mathcal{C})\rightarrow U(\mathcal{C})$ in $\mathrm{Ho}_{\infty}(\twocat)$ 
lifts to a morphism $\pi_{\mathcal{C}}\colon\mathrm{PsMon}(\mathcal{C})\rightarrow U(\mathcal{C})$ of $\mathsf{E}_k$-monoids.
\end{corollary}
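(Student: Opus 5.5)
The plan is to mirror the proof of Corollary~\ref{corinftymain2}, with $\mathrm{Cat}_{\infty}^{\times}$ replaced by $\mathrm{Ho}_{\infty}(\twocat)^{\times}$. The input is Corollary~\ref{cor_monobasemap}. It gives a 2-cell
\[\xymatrix{\mathrm{Alg}_{\mathsf{E}_1}(\mathrm{Ho}_{\infty}(\twocat)^{\times})^{\times}\ar@/^1pc/[r]^{\mathrm{Alg}_{\mathsf{E}_1}(-)}\ar@/_1pc/[r]_{U}\ar@{}[r]|{\hspace{1pc}\Downarrow\pi} & \mathrm{Ho}_{\infty}(\twocat)^{\times}}\]
of symmetric monoidal functors, that is, a natural transformation between morphisms of $\infty$-operads over $\mathrm{Fin}_{\ast}$. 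Equivalently, it is a single morphism of $\infty$-operads
\[\mathrm{Alg}_{\mathsf{E}_1}(\mathrm{Ho}_{\infty}(\twocat)^{\times})^{\times}\times\Delta^1\rightarrow\mathrm{Ho}_{\infty}(\twocat)^{\times}\]
over $\mathrm{Fin}_{\ast}$ that preserves inert morphisms.

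First, I use the Additivity Theorem (Theorem~\ref{thm_add}) together with Example~\ref{exple_bv}. Through them, the given $\mathsf{E}_{k+1}$-monoid $\mathcal{C}$ corresponds to a morphism of $\infty$-operads $\mathcal{C}\colon\mathsf{E}_k\rightarrow\mathrm{Alg}_{\mathsf{E}_1}(\mathrm{Ho}_{\infty}(\twocat)^{\times})^{\times}$. Lemma~\ref{lemma_grmon=E1} lets me present its underlying object by a Gray monoid, so that $\mathrm{PsMon}(\mathcal{C})$ makes sense.

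Next, I whisker $\pi$ with this morphism, i.e.\ precompose as in diagram (\ref{diag_corinftymain2}). This gives a natural transformation between two morphisms of $\infty$-operads $\mathsf{E}_k\rightarrow\mathrm{Ho}_{\infty}(\twocat)^{\times}$. Since $\mathrm{Alg}_{\mathsf{E}_k}(-)$ is a full subcategory of $\mathrm{Fun}_{\mathrm{Fin}_{\ast}}(\mathsf{E}_k,-)$, this natural transformation is a morphism in $\mathrm{Alg}_{\mathsf{E}_k}(\mathrm{Ho}_{\infty}(\twocat)^{\times})$.

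I then identify the two ends of this morphism. The source is $\mathrm{Alg}_{\mathsf{E}_1}(-)\circ\mathcal{C}$, which by construction is the $\mathsf{E}_k$-structure on $\mathrm{PsMon}(\mathcal{C})$ from Corollary~\ref{corpsmonmonoidal}. The target is $U\circ\mathcal{C}$. I must show it is the underlying $\mathsf{E}_k$-monoid $U(\mathcal{C})$, namely the restriction of $\mathcal{C}$ along the inclusion $\mathsf{E}_k\hookrightarrow\mathsf{E}_{k+1}$ corresponding to the second factor of $\mathsf{E}_1\otimes_{\mathrm{BV}}\mathsf{E}_k$. This holds because, under the equivalence of Example~\ref{exple_bv}, postcomposition with the forgetful functor $\mathrm{Alg}_{\mathsf{E}_1}(\mathcal{V})\rightarrow\mathcal{V}$ corresponds to restriction along $\mathsf{E}_k\rightarrow\mathsf{E}_1\otimes_{\mathrm{BV}}\mathsf{E}_k$, induced by the unit $\mathsf{E}_0\rightarrow\mathsf{E}_1$ in the first factor. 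Finally, evaluating at the unique colour $u$ recovers the underlying morphism. By the definition of $\pi$ in (\ref{defnattransfmonobase}) and Corollary~\ref{cor_monobasemap}, this is the 2-functor $\pi_{\mathcal{C}}$ in $\mathrm{Ho}_{\infty}(\twocat)$.

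The step I expect to be the main obstacle is identifying the target $U\circ\mathcal{C}$ with $U(\mathcal{C})$. It requires the compatibility between the Boardman--Vogt equivalence and the forgetful functor, which the paper has only used implicitly so far. My first attempt is to reduce it to the naturality of the equivalence in Example~\ref{exple_bv} in the operad variable, applied to the map $\mathsf{E}_0\rightarrow\mathsf{E}_1$, together with the fact that $\mathrm{Alg}_{\mathsf{E}_0}$ of a cartesian monoidal $\infty$-category is, up to the pointing, the underlying category. A secondary but routine point is that the lift of $\pi$ to a symmetric monoidal transformation is essentially unique, which Corollary~\ref{cor_monobasemap} already supplies because both sides are cartesian.
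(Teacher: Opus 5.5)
Your proposal is correct and is essentially the paper's own argument. You whisker the cartesian monoidal transformation $\pi$ of Corollary~\ref{cor_monobasemap} with $\mathcal{C}$, regarded via Additivity as an $\mathsf{E}_k$-algebra in $\mathrm{Alg}_{\mathsf{E}_1}(\mathrm{Ho}_{\infty}(\twocat)^{\times})^{\times}$, and then use that $\mathrm{Alg}_{\mathsf{E}_k}(-)\subseteq\mathrm{Fun}_{\mathrm{Fin}_{\ast}}(\mathsf{E}_k,-)$ is pointwise full.

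The step you flag as the main obstacle, identifying $U\circ\mathcal{C}$ with the restriction along $\mathsf{E}_k\hookrightarrow\mathsf{E}_{k+1}$, is simply taken for granted in the paper (as in the proof of Corollary~\ref{corinftymain2}). If you want to justify it, restrict along the trivial $\infty$-operad $\mathrm{Triv}\rightarrow\mathsf{E}_1$, which is the unit for $\otimes_{\mathrm{BV}}$ and satisfies $\mathrm{Alg}_{\mathrm{Triv}}(\mathcal{V})\simeq\mathcal{V}$. This is cleaner than going through $\mathsf{E}_0$, whose algebras in a cartesian setting are pointed objects rather than objects.
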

\begin{proof}
This is an immediate formal consequence of Corollary~\ref{cor_monobasemap} (following the blueprint of Corollary~\ref{corinftymain2}).  
Indeed, the natural transformation (\ref{equ_monobasemap}) is a 2-cell between cocartesian functors of cocartesian fibrations over
$\mathrm{Fin}_{\ast}$. In particular, as $\mathrm{Alg}_{\mathsf{E}_k}(-)\subseteq\mathrm{Fun}_{\mathrm{Fin}_{\ast}}(\mathsf{E}_k,-)$ is 
defined to be pointwise a full subcategory, every $\mathsf{E}_k$-monoid
$\mathcal{C}\in\mathrm{Alg}_{\mathsf{E}_k}(\mathrm{Ho}_{\infty}(\mathrm{GrMon})^{\times})$ 
gives rise to a morphism $\pi_{\mathcal{C}}\colon\mathrm{PsMon}(\mathcal{C})\rightarrow U(\mathcal{C})$ of $\mathsf{E}_k$-monoids simply by 
way of precomposition with $\mathcal{C}$.
\end{proof}

Lastly,
suppose $\mathcal{C}$ is an $\mathsf{E}_k$-monoid in $\mathrm{Ho}_{\infty}(\twocat)^{\times}$ for some $k\geq 1$. Then
$\mathrm{Alg}_{\mathsf{E}_1}(\mathcal{C})$ is an $\mathsf{E}_{k-1}$-monoid in $\mathrm{Ho}_{\infty}(\twocat)^{\times}$ by 
Corollary~\ref{corpsmonmonoidal}
\footnote{The $E_0$-structure on $\mathrm{Alg}_{\mathsf{E}_1}(\mathcal{C})$ in case $k=1$ is fairly trivial.}. By way of the Additivity 
Theorem, we may hence iterate this construction $m$-times for any $m\leq k$ to obtain an $\mathsf{E}_{k-m}$-monoid
\[\mathrm{Alg}_{\mathsf{E}_m}(\mathcal{C}):=\mathrm{Alg}_{\mathsf{E}_1}^{(m)}(\mathcal{C})\]
of $m$-fold pseudomonoids in $\mathcal{C}$. A specification of this object in more concrete algebraic terms will be addressed in 
Conjecture~\ref{conj_higherEnint}.


\begin{theorem}\label{thm_main_12}
Let $\mathcal{C}$ be an $\mathsf{E}_k$-monoid in $\mathrm{Ho}_{\infty}(\text{2-Cat})^\times$. Let $0\leq m\leq k$ be an integer. Then the
2-category $\mathrm{Alg}_{\mathsf{E}_m}(\mathcal{C})$ carries a canonical $\mathsf{E}_{k-m}$-monoid structure. Furthermore, the 2-functor
\[U\colon\mathrm{Alg}_{\mathsf{E}_m}(\mathcal{C})\rightarrow \mathcal{C}\]
which assigns to an $m$-fold pseudomonoid its underlying object lifts to a morphism of $\mathsf{E}_{k-m}$-monoids.
\end{theorem}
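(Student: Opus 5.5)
The plan is an induction on $m$, iterating Corollary~\ref{corpsmonmonoidal} and Corollary~\ref{cor_monforget}, with the Additivity Theorem (Theorem~\ref{thm_add}) handling the bookkeeping between the $\mathsf{E}$-degrees. For $m=0$ we have $\mathrm{Alg}_{\mathsf{E}_0}(\mathcal{C})=\mathcal{C}$ by convention. The $\mathsf{E}_k$-structure is then the given one, and $U$ is the identity, so there is nothing to prove.

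For the inductive step, suppose $1\leq m\leq k$. Assume $\mathrm{Alg}_{\mathsf{E}_{m-1}}(\mathcal{C})$ carries an $\mathsf{E}_{k-m+1}$-monoid structure, and that the underlying-object 2-functor $U_{m-1}\colon\mathrm{Alg}_{\mathsf{E}_{m-1}}(\mathcal{C})\rightarrow\mathcal{C}$ is a morphism of $\mathsf{E}_{k-m+1}$-monoids, where $\mathcal{C}$ is regarded as an $\mathsf{E}_{k-m+1}$-monoid by restriction along $\mathsf{E}_{k-m+1}\hookrightarrow\mathsf{E}_k$. Since $k-m+1=(k-m)+1$, Corollary~\ref{corpsmonmonoidal} applies to $\mathrm{Alg}_{\mathsf{E}_{m-1}}(\mathcal{C})$. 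It gives an $\mathsf{E}_{k-m}$-structure on $\mathrm{Alg}_{\mathsf{E}_1}(\mathrm{Alg}_{\mathsf{E}_{m-1}}(\mathcal{C}))=\mathrm{Alg}_{\mathsf{E}_m}(\mathcal{C})$, and this structure is canonical because it is obtained by postcomposing with the cartesian monoidal functor of Corollary~\ref{cor_inftypsmon}. Corollary~\ref{cor_monforget} then shows that $\pi\colon\mathrm{Alg}_{\mathsf{E}_m}(\mathcal{C})\rightarrow\mathrm{Alg}_{\mathsf{E}_{m-1}}(\mathcal{C})$ is a morphism of $\mathsf{E}_{k-m}$-monoids. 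Here the target carries the $\mathsf{E}_{k-m}$-structure underlying its $\mathsf{E}_{k-m+1}$-structure. Under Additivity, that underlying structure is restriction along one factor inclusion $\mathsf{E}_{k-m}\hookrightarrow\mathsf{E}_{k-m}\otimes_{\mathrm{BV}}\mathsf{E}_1\simeq\mathsf{E}_{k-m+1}$.

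Next we restrict $U_{m-1}$ along $\mathsf{E}_{k-m}\hookrightarrow\mathsf{E}_{k-m+1}$, which still gives a morphism of $\mathsf{E}_{k-m}$-monoids. The composite $U_m:=U_{m-1}\circ\pi$ is then a morphism of $\mathsf{E}_{k-m}$-monoids. On underlying 2-categories it sends an $m$-fold pseudomonoid to its underlying object, since it forgets one pseudomonoid layer at a time. The target of $U_m$ is $\mathcal{C}$ with the $\mathsf{E}_{k-m}$-structure restricted along $\mathsf{E}_{k-m}\hookrightarrow\mathsf{E}_{k-m+1}\hookrightarrow\mathsf{E}_k$, which is the canonical restriction. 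This completes the induction. The case $k=\infty$ follows the same pattern with $k-m=\infty$, using $\mathsf{E}_\infty\otimes_{\mathrm{BV}}\mathsf{E}_1\simeq\mathsf{E}_\infty$. For $m=\infty$ one uses instead the colimit description of $\mathsf{E}_\infty$, or applies Proposition~\ref{prop_bv}-type reasoning directly.

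I expect the main obstacle to be the compatibility of the various inclusions $\mathsf{E}_{a}\hookrightarrow\mathsf{E}_{a+1}$ with the Additivity equivalences. The $\mathsf{E}_{k-m}$-structure on the target of $\pi$, as it arises in Corollary~\ref{cor_monforget}, must agree with the restriction of the inductively given $\mathsf{E}_{k-m+1}$-structure. Equivalently, the two factor inclusions $\mathsf{E}_{k-m}\rightarrow\mathsf{E}_{k-m}\otimes_{\mathrm{BV}}\mathsf{E}_1\simeq\mathsf{E}_{k-m+1}$ must agree with the standard inclusion. Theorem~\ref{thm_add} states that the canonical inclusions induce the equivalence, and I would appeal to exactly that statement to identify the two structures. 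I would also check that the order in which the $\mathsf{E}_1$-factors are peeled off matters only up to the symmetry of $\otimes_{\mathrm{BV}}$, which is harmless because any such reordering is an automorphism commuting with the relevant restrictions.
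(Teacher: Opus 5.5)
Your proposal is correct and is essentially the paper's argument written out in full. The paper defines $\mathrm{Alg}_{\mathsf{E}_m}(\mathcal{C})$ as the $m$-fold iterate $\mathrm{Alg}^{(m)}_{\mathsf{E}_1}(\mathcal{C})$ and calls the theorem immediate from Corollary~\ref{corpsmonmonoidal} and Corollary~\ref{cor_monforget} via Additivity, which is exactly your induction with $U_m=U_{m-1}\circ\pi$; your check that the factor inclusions match the standard $\mathsf{E}_{k-m}\hookrightarrow\mathsf{E}_{k-m+1}$ spells out a step the paper leaves implicit, and your closing remark on $m=\infty$ is unnecessary because $m$ is an integer and $\mathrm{Alg}_{\mathsf{E}_m}(\mathcal{C})$ is only defined by finite iteration.
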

\begin{proof}
Immediate by Corollary~\ref{corpsmonmonoidal} and Corollary~\ref{cor_monforget}.
\end{proof}

\begin{remark}
Let us denote by $(2,1)\text{-Cat}$ the category of locally groupoidal 2-categories. It comes equipped with a canoncial model structure, too, as well as a canonical cartesian monoidal embedding
\[N\colon\mathrm{Ho}_{\infty}((2,1)\text{-Cat}))^{\times}\rightarrow\mathrm{Cat}_{\infty}^{\times}\]
of $\infty$-categories. This embedding automatically maps $\mathsf{E}_k$-monoids to $\mathsf{E}_k$-monoidal $\infty$-categories.
Furthermore, the functor $\mathrm{Alg}_{\mathsf{E}_1}(-)\colon\mathrm{Alg}_{\mathsf{E}_1}(\mathrm{Ho}_{\infty}(\twocat))^{\times}\rightarrow\mathrm{Ho}_{\infty}(\twocat))^{\times}$ restricts to a cartesian monoidal functor
\[\mathrm{Alg}_{\mathsf{E}_1}(-)\colon\mathrm{Alg}_{\mathsf{E}_1}(\mathrm{Ho}_{\infty}((2,1)\text{-Cat}))^{\times}\rightarrow\mathrm{Ho}_{\infty}((2,1)\text{-Cat}))^{\times}.\]
In analogy to Proposition~\ref{prop_alg=hom_1}, one can then show that for every $\mathsf{E}_m$-algebra $\mathcal{C}$ in
$\mathrm{Ho}_{\infty}((2,1)\text{-Cat}))^{\times}$, there is a natural equivalence
\[N(\mathrm{Alg}_{\mathsf{E}_m}(\mathcal{C}))\simeq\mathrm{Alg}_{\mathsf{E}_m}(N(\mathcal{C})).\]
One can make a similar observation when one more generally considers any given 2-category as an $(\infty,2)$-category, and any given Gray 
monoid as a monoidal $(\infty,2)$-category accordingly by way of Lemma~\ref{lemma_grmon=E1}. This justifies the notation
$\mathrm{Alg}_{\mathsf{E}_1}(-)$ for the functor $\mathrm{Ho}_{\infty}(\mathrm{PsMon}(-))$.
\end{remark}

\subsection*{Semi-strict algebras in the 3-category of 2-categories}

In this section we give a proof of Theorem~\ref{mainthmgray2}, which is expressed in algebraic terms yet to be specified precisely. We want 
to use Theorem~\ref{thm_main_12} to do so, which in turn is inherently phrased in homotopy theoretical terms. Thus, to deduce 
Theorem~\ref{mainthmgray2} from Theorem~\ref{thm_main_12}, we need a translation between the corresponding algebraic and homotopical 
notions. Given the nature of these homotopical notions, the algebraic structures involved need to be equivalence invariant. We therefore 
make the following notational convention.

\begin{notation}\label{notation_proofs1.2}
In the following, the term ``monoidal structure'' on a 2-category $\mathcal{C}$ refers to a bicategorical monoidal structure rather than a
Gray monoidal structure on $\mathcal{C}$. That is to say, it refers to a 2-category $\mathcal{C}$ that is 2-equivalent to a Gray monoid. 
The terminology is in line with the corresponding distinction between monoidal 2-functors and Gray functors of \cite{daystreet}. In this 
respect, a ``braided/sylleptic/symmetric monoidal 2-category'' will refer to a braided/sylleptic/symmetric monoidal bicategory whose 
underlying bicategory is a 2-category. All of these notions are hence to be understood as ``fully weak''.
We make the same notational convention about 2-functors. I.e.\ let $\mathcal{C}$ and $\mathcal{D}$ be 2-categories equipped with a
(braided/sylleptic/symmetric) monoidal structure each. Let $\bar{\mathcal{C}}$ be the (fully weakly braided/sylleptic/symmetric) Gray monoid 
together with a 2-equivalence $\bar{\mathcal{C}}\rightarrow\mathcal{C}$ that equips $\mathcal{C}$ with its corresponding monoidal structure. 
Let $\bar{\mathcal{D}}\rightarrow\mathcal{D}$ be accordingly. Then a (braided/sylleptic/symmetric) monoidal 2-functor
$f\colon\mathcal{C}\rightarrow\mathcal{D}$ is a 2-functor $f\colon\mathcal{C}\rightarrow\mathcal{D}$ together with a
(braided/sylleptic/symmetric) monoidal 2-functor $\bar{f}\colon\bar{\mathcal{C}}\rightarrow\bar{\mathcal{D}}$ in the sense of 
\cite{daystreet} that makes the associated square commute up to equivalence.
\end{notation}


In Lemma~\ref{lemma_grmon=E1} we have seen that for every 2-category $\mathcal{C}$ there is a 1-1 correspondence between monoidal 
structures on $\mathcal{C}$, and $\mathsf{E}_1$-monoid structures on $\mathcal{C}$ in $\mathrm{Ho}_{\infty}(\twocat)^{\times}$. Given two 
monoidal 2-categories $\mathcal{C}$ and $\mathcal{D}$ and a 2-functor $f\colon\mathcal{C}\rightarrow\mathcal{D}$, the same applies to 
monoidal structures on $f$ and $\mathsf{E}_1$-monoid structures on $f$ in $\mathrm{Ho}_{\infty}(\twocat)^{\times}$.
This 1-1 correspondence can be extended as follows \cite{rs_halgicatI}.

\begin{theorem}\label{thm_translatealghom}
There are the following bijections between sets of respective equivalence classes:
\begin{enumerate}
\item $\{$monoidal bicategories$\}\cong\{\mathsf{E}_1$-algebras in $\mathrm{BiCat}^{\times}\}$;
\item $\{$braided monoidal bicategories$\}\cong\{\mathsf{E}_2$-algebras in $\mathrm{BiCat}^{\times}\}$;
\item $\{$sylleptic monoidal bicategories$\}\cong\{\mathsf{E}_3$-algebras in $\mathrm{BiCat}^{\times}\}$;
\item $\{$symmetric monoidal bicategories$\}\cong\{\mathsf{E}_{\infty}$-algebras in $\mathrm{BiCat}^{\times}\}$.
\end{enumerate}
These bijections are natural with respect to the obvious forgetful maps from bottom to top. The same applies to the corresponding 
notions of morphisms between them.
\end{theorem}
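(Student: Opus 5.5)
The plan is to reduce all four bijections to a comparison between \emph{free} objects: the free algebraic structure on one generator on the one side, and a suitable truncation of the little cubes operads on the other. Statement (1) is essentially Lemma~\ref{lemma_grmon=E1} combined with Theorem~\ref{thm_bito2cat} and Theorem~\ref{thm_main_11}, so the work lies in (2)--(4).

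\textbf{Truncation step.} The $\infty$-category $\mathrm{Ho}_{\infty}(\twocat)\simeq\mathrm{Ho}_{\infty}(\mathrm{BiCat})$ is a $(3,1)$-category, meaning all its mapping spaces are $2$-truncated. Consequently, for any $\infty$-operad $\mathcal{O}$, the space of $\mathcal{O}$-algebra structures on a fixed bicategory only depends on the fibrewise $2$-truncation $\tau_{\leq 2}\mathcal{O}$ of the operation spaces. For $\mathsf{E}_k$ these operation spaces are the configuration spaces $\mathrm{Conf}_n(\mathbb{R}^k)$, which are $(k-2)$-connected. From this I would extract the following picture:
\begin{itemize}
\item $\tau_{\leq 2}\mathsf{E}_2$ has operation spaces $K(PB_n,1)$, i.e.\ it is governed by braid groupoids.
\item $\tau_{\leq 2}\mathsf{E}_3$ has operation spaces that are simply connected with nontrivial $\pi_2$, so their low cells encode the syllepsis.
\item For $k\geq 4$ the configuration spaces are $2$-connected, so $\tau_{\leq 2}\mathsf{E}_k\simeq\tau_{\leq2}\mathsf{E}_{\infty}$.
\end{itemize}
The last point is the Baez--Dolan--Lurie stabilization \cite[Example 5.1.2.3]{lurieha}. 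It already explains why (4) needs no separate level beyond $\mathsf{E}_3$, apart from identifying the one extra condition that separates symmetric from sylleptic monoidal bicategories.

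\textbf{Passing through Additivity.} Using Theorem~\ref{thm_add} and Lemma~\ref{lemma_grmon=E1}, an $\mathsf{E}_2$-algebra is an $\mathsf{E}_1$-algebra in $\mathrm{Ho}_{\infty}(\mathrm{GrMon})^{\times}$. Concretely, this is a Gray monoid $\mathcal{C}$ together with a (weak) monoidal structure on the multiplication $\otimes\colon\mathcal{C}\times\mathcal{C}\to\mathcal{C}$ compatible with the unit and associator. Algebraically, following the classical Joyal--Street observation \cite[Proposition 5.2]{joyalstreet} in one dimension higher, such a monoidal structure on $\otimes$ is exactly a braiding in the sense of Day--Street/Kapranov--Voevodsky, with the tensorator providing the interchange. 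I would establish the correspondence in two parts:
\begin{enumerate}
\item Show that the structure maps of the monoidal $2$-functor $\otimes$ determine the braiding $1\otimes\beta\otimes1$, as in Proposition~\ref{prop_mult_explicit}.
\item Show that its coherence axioms are equivalent to the braiding axioms.
\end{enumerate}
Iterating once more gives $\mathsf{E}_3$: a braided structure on $\otimes$ compatible with the given braiding. Here the extra datum is a modification comparing $\beta$ with $\beta^{-1}$ reversed, namely the syllepsis. For (4), the $\mathsf{E}_4$ level adds only the condition that the syllepsis is symmetric, which is a property because it involves equalities of modifications. That this property suffices for all $\mathsf{E}_k$, $k\geq4$, follows from the stabilization above.

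\textbf{Main obstacle.} The hard step is making these identifications rigorous as \emph{bijections of equivalence classes} that are natural in the forgetful maps, rather than a heuristic match of data. My first approach would go through coherence theorems for free objects:
\begin{enumerate}
\item The free braided monoidal bicategory on one object is biequivalent to the one built from braid groups (Gurski's coherence theorem for braided monoidal bicategories).
\item The free sylleptic and symmetric monoidal bicategories have the expected homotopy types (Gurski--Osorno, Schommer-Pries).
\item These free objects should be compared, as single-coloured operads enriched in $2$-types, with $\tau_{\leq2}\mathsf{E}_k$, for instance via a Fox--Neuwirth cell structure on $\mathrm{Conf}_n(\mathbb{R}^k)$.
\end{enumerate}
If I can show that the algebraic theory is presented by an operad in $\mathrm{Ho}_{\infty}(\twocat)$ equivalent to $\tau_{\leq2}\mathsf{E}_k$, then algebras match, and morphisms match by the same corepresentability argument. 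I expect the comparison in the third step, especially for $\mathsf{E}_3$ where $\pi_2$ enters, to be where the real work lies.
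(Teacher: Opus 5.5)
Your preliminary analysis is sound. In the $(3,1)$-category $\mathrm{Ho}_{\infty}(\twocat)$ an algebra over an $\infty$-operad $\mathcal{O}$ only sees the fibrewise truncation $\tau_{\le 2}\mathcal{O}$. The $(k-2)$-connectivity of $\mathrm{Conf}_n(\mathbb{R}^k)$ then reproduces exactly the stabilization $\mathsf{E}_4\simeq\mathsf{E}_{\infty}$ that the paper invokes in the proof of Theorem~\ref{mainthmgray}(3). Your treatment of part (1) via Lemma~\ref{lemma_grmon=E1} and Theorem~\ref{thm_bito2cat} is also how the paper regards it. For parts (2)--(4), however, the entire content of the statement is the comparison between the fully algebraic notions (braidings, syllepses, symmetries, and the corresponding structured pseudofunctors) and homotopy-coherent $\mathsf{E}_k$-algebras. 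That is exactly the step you leave conditional (``If I can show \dots''), so the proposal reduces the theorem to its hard part rather than proving it. The paper does not prove this theorem in the present text either; it imports it from \cite{rs_halgicatI}.

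The tools you name do not close this gap.
\begin{itemize}
\item Free-object coherence theorems (Gurski, Gurski--Osorno) tell you the homotopy types of operations in the free algebraic structures. They do not show that an arbitrary braided monoidal bicategory \emph{is} a homotopy-coherent algebra over the operad those $2$-types form, nor conversely. That is a rectification statement, and it is what would give injectivity and surjectivity on equivalence classes as well as the claim about morphisms.
\item For $k=1$ the paper gets rectification from \cite[Theorem 4.1.8.4]{lurieha} applied to the monoidal model category $\mathrm{Gray}$. It must use $\otimes_{\mathrm{Gr}}$ there, since the cartesian product on $\twocat$ is not compatible with Lack's model structure.
\item For $k\geq 2$ you would need three things: a point-set operad in $\mathrm{Gray}$ whose strict algebras and maps model the algebraic notions up to equivalence (a strictification for the structures \emph{and} for their structured pseudofunctors); a rectification theorem comparing its strict algebras with $\infty$-operadic algebras in $\mathrm{Ho}_{\infty}(\twocat)^{\times}$; and an equivalence of operads, not merely of levelwise homotopy types, with $\tau_{\le 2}\mathsf{E}_k$.
\item The last of these is substantive for $k=3$. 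You must show that the syllepsis generates $\pi_2\mathrm{Conf}_n(\mathbb{R}^3)\cong\mathbb{Z}^{\binom{n}{2}}$ compatibly with operadic composition. That needs a coherence theorem for sylleptic monoidal bicategories which your sketch does not supply.
\item Your Additivity route has the same hole. An $\mathsf{E}_1$-algebra in $\mathrm{Ho}_{\infty}(\mathrm{GrMon})^{\times}$ is homotopy-coherent data, so reading it ``concretely'' as a monoidal structure on $\otimes$, and then identifying that with a braiding by a $2$-dimensional Joyal--Street argument, is asserted rather than proved. The naive strict model, monoids in $(\mathrm{GrMon},\otimes_{\mathrm{Gr}})$, is strictly commutative by Eckmann--Hilton, so it cannot model non-symmetric braidings.
\end{itemize}
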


Theorem~\ref{mainthmgray} can now be derived from Theorem~\ref{thm_main_12} as follows.

\begin{theorem}\label{mainthmgray}
Let $\mathcal{C}$ be a monoidal bicategory and $\mathrm{Mon}(\mathcal{C})$ be the bicategory of monoids in $\mathcal{C}$.
\begin{enumerate}
\item Every braiding on $\mathcal{C}$ induces a monoidal structure on $\mathrm{Mon}(\mathcal{C})$.
\item Every syllepsis on $\mathcal{C}$ induces a braided monoidal structure on $\mathrm{Mon}(\mathcal{C})$.
\item Every symmetry on $\mathcal{C}$ induces a symmetric monoidal structure on $\mathrm{Mon}(\mathcal{C})$.
\end{enumerate}
In each case the corresponding structure is canonically induced, i.e.\ the forgetful functor
$\mathrm{Mon}(\mathcal{C})\rightarrow\mathcal{C}$ is as monoidal as its domain is.
\end{theorem}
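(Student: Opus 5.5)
The plan is to deduce the statement from Theorem~\ref{thm_main_12} (with $m=1$) by way of the dictionary of Theorem~\ref{thm_translatealghom} and the strictification of Theorem~\ref{thm_bito2cat}. First, by Theorem~\ref{thm_bito2cat} and Theorem~\ref{thm_main_11}, we replace $\mathcal{C}$ by a monoidally biequivalent Gray monoid $\bar{\mathcal{C}}$. Its underlying 2-category is then an object of $\mathrm{Ho}_{\infty}(\twocat)\simeq\mathrm{Ho}_{\infty}(\mathrm{BiCat})$, and by Remark~\ref{rem_defbicatmon} we have $\mathrm{Mon}(\mathcal{C})=\mathrm{PsMon}(\bar{\mathcal{C}})$. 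Since all notions involved are equivalence invariant (Notation~\ref{notation_proofs1.2}), it suffices to treat $\bar{\mathcal{C}}$.

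Next, we feed in the extra structure. A braiding, syllepsis, or symmetry on $\mathcal{C}$ corresponds, by Theorem~\ref{thm_translatealghom}, to an $\mathsf{E}_2$-, $\mathsf{E}_3$-, or $\mathsf{E}_\infty$-algebra structure on $\bar{\mathcal{C}}$ in $\mathrm{Ho}_{\infty}(\twocat)^{\times}$. Naturality of these bijections with respect to the forgetful maps guarantees that the underlying $\mathsf{E}_1$-algebra is the given monoidal structure, i.e.\ the Gray monoid $\bar{\mathcal{C}}$ under Lemma~\ref{lemma_grmon=E1}. Applying Theorem~\ref{thm_main_12} with $m=1$ and $k=2,3,\infty$ respectively equips $\mathrm{Alg}_{\mathsf{E}_1}(\bar{\mathcal{C}})=\mathrm{PsMon}(\bar{\mathcal{C}})$ with an $\mathsf{E}_1$-, $\mathsf{E}_2$-, or $\mathsf{E}_\infty$-monoid structure. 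Here we use the convention $\infty-1=\infty$ in the symmetric case, which appeals to the same stabilization as in the proof of Theorem~\ref{cor_main1cat}. Translating back through Theorem~\ref{thm_translatealghom} yields a monoidal, braided monoidal, or symmetric monoidal structure on $\mathrm{Mon}(\mathcal{C})$.

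For the final clause, Theorem~\ref{thm_main_12} also tells us that the forgetful 2-functor $U=\pi_{\bar{\mathcal{C}}}$ lifts to a morphism of $\mathsf{E}_{k-1}$-monoids into the restricted $\mathsf{E}_{k-1}$-structure on $\bar{\mathcal{C}}$. The clause on morphisms in Theorem~\ref{thm_translatealghom} then converts this lift into a monoidal, braided, or symmetric monoidal structure on the forgetful functor $\mathrm{Mon}(\mathcal{C})\to\mathcal{C}$. We still have to check that the $\mathsf{E}_{k-1}$-structure on $\bar{\mathcal{C}}$ obtained via Additivity agrees with the one obtained by forgetting along $\mathsf{E}_{k-1}\hookrightarrow\mathsf{E}_k$. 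This agreement was already used implicitly in the proof of Corollary~\ref{corinftymain2}.

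The main obstacle is the bookkeeping of identifications rather than any computation. The $\mathsf{E}_{k-1}$-structure produced by Theorem~\ref{thm_main_12} lives on the object of $\mathrm{Ho}_{\infty}(\twocat)$ represented by $\mathrm{PsMon}(\bar{\mathcal{C}})$. We must make sure that this object is well defined up to equivalence, independently of the chosen strictification $\bar{\mathcal{C}}$. This follows because $\mathrm{PsMon}(-)$ preserves weak equivalences (Corollary~\ref{cor_inftypsmon}). We must also make sure that the output is exactly what the theorem calls a structure on $\mathrm{Mon}(\mathcal{C})$, i.e.\ a weak structure transported along a 2-equivalence as in Notation~\ref{notation_proofs1.2}. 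I would handle both points by working only with equivalence classes throughout, which is exactly the level at which Theorem~\ref{thm_translatealghom} is formulated.
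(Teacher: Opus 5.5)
Your proposal is correct and follows essentially the same route as the paper. You strictify to a Gray monoid, translate the braiding/syllepsis/symmetry into an $\mathsf{E}_2$/$\mathsf{E}_3$/$\mathsf{E}_\infty$-structure via Theorem~\ref{thm_translatealghom}, apply Corollaries~\ref{corpsmonmonoidal} and~\ref{cor_monforget} (i.e.\ Theorem~\ref{thm_main_12} with $m=1$, using stabilization in the symmetric case), and translate back. The paper additionally makes your ``translating back'' step explicit: via Lemma~\ref{lemma_grmon=E1} it picks a cofibrant Gray monoid $\mathbb{L}\mathrm{PsMon}(\mathcal{C})$ with a 2-equivalence to $\mathrm{PsMon}(\mathcal{C})$, which realizes the structure in the sense of Notation~\ref{notation_proofs1.2}.
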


\begin{proof}
Assuming that the monoidal bicategory $\mathcal{C}$ is a Gray monoid by way of Theorem~\ref{thm_bito2cat}, we are to show that
\begin{enumerate}
\item Every braiding on $\mathcal{C}$ induces a monoidal structure on $\mathrm{PsMon}(\mathcal{C})$.
\item every syllepsis on $\mathcal{C}$ induces a braided monoidal structure on $\mathrm{PsMon}(\mathcal{C})$.
\item every symmetry on $\mathcal{C}$ induces a symmetric monoidal structure on $\mathrm{PsMon}(\mathcal{C})$.
\end{enumerate}
Here we employ the conventions from Remark~\ref{rem_defbicatmon} and Notation~\ref{notation_proofs1.2}. Furthermore, we are to show that
the forgetful functor
\[\mathrm{PsMon}(\mathcal{C})\xrightarrow{\pi_{\mathcal{C}}}\mathcal{C}\]
is (braided/symmetric) monoidal whenever $\mathcal{C}$ is braided (sylleptic/symmetric).

For Part (1), assume that $\mathcal{C}$ is a braided Gray monoid. By Part (1) of Theorem~\ref{thm_translatealghom} and 
Lemma~\ref{lemma_grmon=E1} it gives rise to an $\mathsf{E}_1$-monoid $\mathcal{C}$ in $\mathrm{Ho}_{\infty}(\mathrm{GrMon})^{\times}$. It 
follows that the 2-category $\mathrm{PsMon}(\mathcal{C})$ comes equipped with an $\mathsf{E}_1$-monoid structure in
$\mathrm{Ho}_{\infty}(\twocat)^{\times}$ by Corollary~\ref{corpsmonmonoidal}. Thus, there is a cofibrant Gray monoid
$\mathbb{L}\mathrm{PsMon}(\mathcal{C})$ together with an equivalence
$U(\mathbb{L}\mathrm{PsMon}(\mathcal{C}))\rightarrow\mathrm{PsMon}(\mathcal{C})$ of 2-categories again by
Lemma~\ref{lemma_grmon=E1} (in tandem with model categorical formalities). 
Lastly, the composite 2-functor
\begin{align}\label{equ1_mainthmgray}
\mathbb{L}\mathrm{PsMon}(\mathcal{C})\xrightarrow{\simeq}\mathrm{PsMon}(\mathcal{C})\xrightarrow{\pi_{\mathcal{C}}}\mathcal{C}
\end{align}
lifts to a map of $\mathsf{E}_1$-monoids in $\mathrm{Ho}_{\infty}(\twocat)^{\times}$ by Corollary~\ref{cor_monforget}. It follows that the 
2-functor (\ref{equ1_mainthmgray}) is equivalent to a Gray functor by Lemma~\ref{lemma_grmon=E1}, and hence is monoidal. This proves Part 
(1). 

Parts (2) and (3) follow the same pattern.
For Part (2), assume $\mathcal{C}$ is a sylleptic Gray monoid. By Part (2) of Theorem~\ref{thm_translatealghom} it gives 
rise to an $\mathsf{E}_2$-monoid $\mathcal{C}$ in $\mathrm{Ho}_{\infty}(\mathrm{GrMon})^{\times}$. It follows that the 2-category
$\mathrm{PsMon}(\mathcal{C})$ comes equipped with an $\mathsf{E}_2$-monoid structure in $\mathrm{Ho}_{\infty}(\twocat)^{\times}$ by 
Corollary~\ref{corpsmonmonoidal}. In particular, there is a cofibrant Gray monoid $\mathbb{L}\mathrm{PsMon}(\mathcal{C})$ together with an 
equivalence $U(\mathbb{L}\mathrm{PsMon}(\mathcal{C}))\rightarrow\mathrm{PsMon}(\mathcal{C})$ of 2-categories by 
Lemma~\ref{lemma_grmon=E1}. The Gray monoid $\mathbb{L}\mathrm{PsMon}(\mathcal{C})$ is braided by the converse of Part (2) of
Theorem~\ref{thm_translatealghom}. Lastly, the composite 2-functor
\begin{align}\label{equ2_mainthmgray}
\mathbb{L}\mathrm{PsMon}(\mathcal{C})\xrightarrow{\simeq}\mathrm{PsMon}(\mathcal{C})\xrightarrow{\pi(\mathcal{C})}\mathcal{C}
\end{align}
lifts to a map of $\mathsf{E}_2$-monoids in $\mathrm{Ho}_{\infty}(\twocat)^{\times}$ by Corollary~\ref{cor_monforget}. It follows that the 
2-functor (\ref{equ2_mainthmgray}) is braided monoidal by Theorem~\ref{thm_translatealghom}. This finishes Part (2).

For Part (3), assume $\mathcal{C}$ is a symmetric Gray monoid. By Part (3) of Theorem~\ref{thm_translatealghom} it gives 
rise to an $\mathsf{E}_{\infty}$-monoid $\mathcal{C}$ in $\mathrm{Ho}_{\infty}(\twocat)^{\times}$. As
$\mathrm{Ho}_{\infty}(\twocat)$ is a $3$-category, the natural forgetful functor $\mathrm{Alg}_{\mathsf{E}_{\infty}}(\mathrm{Ho}_{\infty}(\twocat)^{\times})\rightarrow\mathrm{Alg}_{\mathsf{E}_k}(\mathrm{Ho}_{\infty}(\twocat)^{\times})$ is an equivalence for all $k\geq 4$ \cite[Theorem 5.1.2.2, Example 5.1.2.3]{lurieha}. 
The same applies to $\mathrm{Alg}_{\mathsf{E}_{\infty}}(\mathrm{Ho}_{\infty}(\mathrm{GrMon})^{\times})$.
It follows that $\mathcal{C}$ is equivalently an $\mathsf{E}_{\infty}$-monoid $\mathcal{C}$ in
$\mathrm{Ho}_{\infty}(\mathrm{GrMon})^{\times}$, and that the 2-category $\mathrm{PsMon}(\mathcal{C})$ comes equipped with an
$\mathsf{E}_{\infty}$-monoid structure in
$\mathrm{Ho}_{\infty}(\twocat)$ as well by way of Corollary~\ref{corpsmonmonoidal}. Thus, there is a cofibrant Gray monoid
$\mathbb{L}\mathrm{PsMon}(\mathcal{C})$ together with an equivalence
$U(\mathbb{L}\mathrm{PsMon}(\mathcal{C}))\rightarrow\mathrm{PsMon}(\mathcal{C})$ of 2-categories by Lemma~\ref{lemma_grmon=E1}. The Gray 
monoid $\mathbb{L}\mathrm{PsMon}(\mathcal{C})$ is symmetric by the converse of Theorem~\ref{thm_translatealghom}.3. Lastly, the composite 
2-functor
\begin{align}\label{equ3_mainthmgray}
\mathbb{L}\mathrm{PsMon}(\mathcal{C})\xrightarrow{\simeq}\mathrm{PsMon}(\mathcal{C})\xrightarrow{\pi(\mathcal{C})}\mathcal{C}
\end{align}
lifts to a map of $\mathsf{E}_{\infty}$-monoids in $\mathrm{Ho}_{\infty}(\twocat){^\times}$ by Corollary~\ref{cor_monforget}. 
It follows that the 2-functor (\ref{equ3_mainthmgray}) is sylleptic (and hence symmetric) monoidal by Theorem~\ref{thm_translatealghom}. 
This finishes Part (3).
\end{proof}

\begin{remark}
Proposition~\ref{prop_mult_explicit} provides a blueprint to extract a concrete formula for the multiplication of two pseudomonoids in a 
braided monoidal bicategory from Theorem~\ref{mainthmgray} as well.
\end{remark} 

\section{Future work}\label{sec_future}

Given a Gray monoid $\mathcal{C}$, we recall the definition of the 2-category $\mathrm{BrMon}(\mathcal{C})$ of braided
pseudomonoids in $\mathcal{C}$ whenever $\mathcal{C}$ itself is braided, as well as the 2-category $\mathrm{CMon}(\mathcal{C})$ of 
symmetric pseudomonoids in $\mathcal{C}$ whenever $\mathcal{C}$ itself is symmetric from \cite{mccrudden}. The following 
characterisation thereof may be thought of as an algebraic 2-dimensional version of the Additivity Theorem, which in this paper we will 
however not pursue further.

\begin{conjecture}\label{conj_higherEnint}
Let $\mathcal{C}$ be a Gray monoid. 
\begin{enumerate}
\item Every braiding on $\mathcal{C}$ induces an equivalence $\mathrm{Alg}_{\mathsf{E}_2}(\mathcal{C})\simeq\mathrm{BrMon}(\mathcal{C})$
of 2-categories.
\item Every syllepsis on $\mathcal{C}$ induces an equivalence $\mathrm{Alg}_{\mathsf{E}_3}(\mathcal{C})\simeq\mathrm{CMon}(\mathcal{C})$
of 2-categories.
\end{enumerate}
\end{conjecture}

\begin{remark}
Conjecture~\ref{conj_higherEnint} is a direct generalisation of \cite[Remark 5.1]{joyalstreet} from the Gray monoid
$\mathrm{Cat}$ of categories equipped with its cartesian monoidal structure to a general Gray monoid $\mathcal{C}$. That is to say, loc.\ 
cit.\ is precisely Conjecture~\ref{conj_higherEnint} for the Gray monoid $\mathcal{C}=\mathrm{Cat}^{\times}$.
Similarly, Theorem~\ref{thm_translatealghom} itself is also a generalization of \cite[Remark 5.1]{joyalstreet}, however in an orthogonal 
sense, to the tricategory of bicategories.
\end{remark}

Given that Conjecture~\ref{conj_higherEnint} holds one can show the following tiered version of Theorem~\ref{mainthmgray}.

\begin{theorem}\label{mainthmgray2}
Let $\mathcal{C}$ be a monoidal bicategory and $\mathrm{Mon}(\mathcal{C})$ be the bicategory of monoids in $\mathcal{C}$.
Assume Conjecture~\ref{conj_higherEnint} holds.
\begin{enumerate}
\item Every syllepsis on $\mathcal{C}$ induces a monoidal structure on the bicategory $\mathrm{BrMon}(\mathcal{C})$ of braided monoids in
$\mathcal{C}$.
\item Every symmetry on $\mathcal{C}$ induces
\begin{enumerate}
\item a symmetric monoidal structure on $\mathrm{BrMon}(\mathcal{C})$, and 
\item a symmetric monoidal structure on the bicategory $\mathrm{CMon}(\mathcal{C})$ of symmetric monoids. 
\end{enumerate}
\end{enumerate}
In each case the corresponding structure is canonically induced, i.e.\ the forgetful functors
$\mathrm{BrMon}(\mathcal{C})\rightarrow\mathcal{C}$ and $\mathrm{CMon}(\mathcal{C})\rightarrow\mathcal{C}$ are as monoidal as their domain 
is.
\end{theorem}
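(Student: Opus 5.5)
Given Conjecture~\ref{conj_higherEnint}, Theorem~\ref{mainthmgray2} follows the pattern of the proof of Theorem~\ref{mainthmgray}, with $m=1$ replaced by $m=2$ or $m=3$ in Theorem~\ref{thm_main_12}. First, by Theorem~\ref{thm_bito2cat} we may assume that $\mathcal{C}$ is a Gray monoid. Then we translate the given structure into homotopical terms via Theorem~\ref{thm_translatealghom} and Lemma~\ref{lemma_grmon=E1}. A syllepsis makes $\mathcal{C}$ an $\mathsf{E}_3$-monoid in $\mathrm{Ho}_{\infty}(\twocat)^{\times}$. A symmetry makes it an $\mathsf{E}_{\infty}$-monoid, and hence, by the stabilization argument used in Part (3) of the proof of Theorem~\ref{mainthmgray} (\cite[Example 5.1.2.3]{lurieha}), an $\mathsf{E}_k$-monoid for every $k\geq 4$ compatibly.

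For Part (1), apply Theorem~\ref{thm_main_12} with $k=3$, $m=2$. This gives a canonical $\mathsf{E}_1$-monoid structure on $\mathrm{Alg}_{\mathsf{E}_2}(\mathcal{C})=\mathrm{Alg}_{\mathsf{E}_1}^{(2)}(\mathcal{C})$, together with a lift of $U\colon\mathrm{Alg}_{\mathsf{E}_2}(\mathcal{C})\rightarrow\mathcal{C}$ to a morphism of $\mathsf{E}_1$-monoids. Part (1) of Conjecture~\ref{conj_higherEnint} identifies $\mathrm{Alg}_{\mathsf{E}_2}(\mathcal{C})\simeq\mathrm{BrMon}(\mathcal{C})$ as 2-categories, and we transport the $\mathsf{E}_1$-structure along this equivalence. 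The same mechanism as in Part (1) of the proof of Theorem~\ref{mainthmgray} then converts the homotopical data into algebraic data. We choose a cofibrant Gray monoid $\mathbb{L}\mathrm{BrMon}(\mathcal{C})$ presenting the $\mathsf{E}_1$-monoid, via Lemma~\ref{lemma_grmon=E1}, and apply the morphism part of Theorem~\ref{thm_translatealghom} to see that the composite $\mathbb{L}\mathrm{BrMon}(\mathcal{C})\rightarrow\mathrm{BrMon}(\mathcal{C})\rightarrow\mathcal{C}$ is monoidal.

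For Part (2), use the $\mathsf{E}_{\infty}$-structure. Taking $m=2$ with $k=\infty$ (so $k-m=\infty$) gives an $\mathsf{E}_{\infty}$-structure on $\mathrm{Alg}_{\mathsf{E}_2}(\mathcal{C})\simeq\mathrm{BrMon}(\mathcal{C})$. Here we restrict the symmetry to a braiding to apply Conjecture~\ref{conj_higherEnint}.(1). Taking $m=3$ gives an $\mathsf{E}_{\infty}$-structure on $\mathrm{Alg}_{\mathsf{E}_3}(\mathcal{C})\simeq\mathrm{CMon}(\mathcal{C})$, restricting the symmetry to a syllepsis for Conjecture~\ref{conj_higherEnint}.(2). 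Theorem~\ref{thm_translatealghom}.(4) turns these into symmetric monoidal structures, and Corollary~\ref{cor_monforget}, iterated, shows that the forgetful functors are symmetric monoidal.

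The main obstacle is the compatibility of the forgetful functors. The conjectured equivalences must commute, up to equivalence, with the maps to $\mathcal{C}$: on one side the homotopical $U=\pi\circ\pi$, obtained by composing the iterated $\pi$'s from Corollary~\ref{cor_monforget}, and on the other the algebraic forgetful functors $\mathrm{BrMon}(\mathcal{C})\rightarrow\mathcal{C}$ and $\mathrm{CMon}(\mathcal{C})\rightarrow\mathcal{C}$. I would include this compatibility in the reading of Conjecture~\ref{conj_higherEnint}, as is natural for any Eckmann--Hilton-type identification sending a doubly-monoidal object to its common underlying object. Granting it, composing morphisms of $\mathsf{E}_{k-m}$-monoids along the iteration is formal.
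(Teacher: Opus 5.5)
Your proposal is correct and is essentially the paper's argument. The paper applies Theorem~\ref{mainthmgray} iteratively along the chain $\mathrm{Alg}_{\mathsf{E}_2}(\mathcal{C})\simeq\mathrm{PsMon}(\mathrm{Alg}_{\mathsf{E}_1}(\mathcal{C}))\to\mathrm{Alg}_{\mathsf{E}_1}(\mathcal{C})\simeq\mathrm{PsMon}(\mathcal{C})\to\mathcal{C}$, translating to algebra at each stage, whereas you iterate homotopically via Theorem~\ref{thm_main_12} with $m=2,3$ and translate once, a slightly cleaner packaging; the compatibility of the conjectured equivalences with the forgetful functors that you flag is used tacitly in the paper too (it only shows the composite out of $\mathrm{Alg}_{\mathsf{E}_2}(\mathcal{C})$ is monoidal), so building it into Conjecture~\ref{conj_higherEnint} is exactly what is needed.
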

\begin{proof}
Again assuming that the monoidal bicategory $\mathcal{C}$ is a Gray monoid by way of Theorem~\ref{thm_bito2cat}, we are to show that
\begin{enumerate}
\item Every syllepsis on $\mathcal{C}$ induces a monoidal structure on the 2-category $\mathrm{BrMon}(\mathcal{C})$.
\item Every symmetry on $\mathcal{C}$ induces 
\begin{enumerate}
\item a symmetric monoidal structure on $\mathrm{BrMon}(\mathcal{C})$, and 
\item a symmetric monoidal structure on the bicategory $\mathrm{CMon}(\mathcal{C})$. 
\end{enumerate}
\end{enumerate}
Furthermore, we are to show that the forgetful functor $\mathrm{BrMon}(\mathcal{C})\rightarrow\mathcal{C}$ is braided (symmetric) monoidal 
whenever $\mathcal{C}$ is sylleptic (symmetric). The forgetful functor $\mathrm{CMon}(\mathcal{C})\rightarrow\mathcal{C}$ is symmetric 
monoidal whenever $\mathcal{C}$ is symmetric.

By way of Conjecture~\ref{conj_higherEnint}, to do so we can simply apply Theorem~\ref{mainthmgray} iteratively. For instance, let
$\mathcal{C}$ be a Gray monoid with a syllepsis. We obtain a sequence of 2-functors as follows.
\begin{align}\label{equ_mainthmgray2}
\mathrm{Alg}_{\mathsf{E}_2}(\mathcal{C})\xrightarrow{\simeq}\mathrm{PsMon}(\mathrm{Alg}_{\mathsf{E}_1}(\mathcal{C}))\xrightarrow{\pi_{\mathrm{Alg}_{E_1}(\mathcal{C})}}\mathrm{Alg}_{E_1}(\mathcal{C})\xrightarrow{\simeq}\mathrm{PsMon}(\mathcal{C})\xrightarrow{\pi_{\mathcal{C}}}\mathcal{C}
\end{align}
Here, both $\mathrm{PsMon}(\mathcal{C})$ and $\pi_{\mathcal{C}}\colon\mathrm{PsMon}(\mathcal{C})\rightarrow\mathcal{C}$ are braided 
monoidal by Theorem~\ref{mainthmgray}. Furthermore, $\mathrm{Alg}_{\mathsf{E}_1}(\mathcal{C})\xrightarrow{\simeq}\mathrm{PsMon}(\mathcal{C})$ is the 
braided Gray monoid that equips the 2-category $\mathrm{PsMon}(\mathcal{C})$ with its braided monoidal structure. Thus, again by 
Theorem~\ref{mainthmgray}, both $\mathrm{PsMon}(\mathrm{Alg}_{\mathsf{E}_1}(\mathcal{C}))$ and
\[\pi_{\mathrm{Alg}_{\mathsf{E}_1}(\mathcal{C})}\colon\mathrm{PsMon}(\mathrm{Alg}_{\mathsf{E}_1}(\mathcal{C}))\rightarrow\mathrm{Alg}_{\mathsf{E}_1}(\mathcal{C})\]
are monoidal. And again, $\mathrm{Alg}_{\mathsf{E}_2}(\mathcal{C})\xrightarrow{\simeq}\mathrm{PsMon}(\mathrm{Alg}_{\mathsf{E}_1}(\mathcal{C}))$ is the Gray 
monoid that equips the 2-category $\mathrm{PsMon}(\mathrm{Alg}_{\mathsf{E}_1}(\mathcal{C}))$ with its monoidal structure. It follows that the entire 
composition (\ref{equ_mainthmgray2}) is monoidal. The same argumentation applies to the two other cases.
\end{proof}

\bibliographystyle{amsalpha}
\bibliography{BSBib}

\newcommand{\noopsort}[1]{}
\providecommand{\bysame}{\leavevmode\hbox to3em{\hrulefill}\thinspace}
\providecommand{\MR}{\relax\ifhmode\unskip\space\fi MR }
\providecommand{\MRhref}[2]{%
  \href{http://www.ams.org/mathscinet-getitem?mr=#1}{#2}
}
\providecommand{\href}[2]{#2}
\begin{thebibliography}{KV94b}

\bibitem[BD98a]{baezdolan_cat}
J.~Baez and J.~Dolan, \emph{Categorification}, Contemp. Math. \textbf{230}
  (1998), 1–36.

\bibitem[BD98b]{baezdolan3}
\bysame, \emph{Higher-dimensional algebra {III}: n-{C}ategories and the algebra
  of opetopes}, Advances in Mathematics \textbf{135} (1998), no.~2, 145--206.

\bibitem[DS97]{daystreet}
B.~Day and R.~Street, \emph{Monoidal bicategories and {H}opf algebroids},
  Advances in Mathematics \textbf{129} (1997), 99--157.

\bibitem[DY23]{decoppetyu2cat}
T.D. Décoppet and M.~Yu, \emph{Gauging noninvertible defects: a 2-categorical
  perspective}, Letters in Mathematical Physics \textbf{113} (2023), no.~36.

\bibitem[GL21]{gambinolobbia}
N.~Gambino and G.~Lobbia, \emph{On the formal theory of pseudomonads and
  pseudodistributive laws}, Theory and {A}pplications of {C}ategories
  \textbf{37} (2021), no.~2, 14--56.

\bibitem[GPS95]{gps_coherence}
R.~Gordon, A.~J. Power, and R.~Street, \emph{Coherence for tricategories},
  Memoirs of the AMS \textbf{117} (1995), no.~558, 1--85.

\bibitem[Gur07]{gurski_stability}
N.~Gurski, \emph{Stability for pseudomonoids},
  https://www.mat.uc.pt/~categ/ct2007/abstracts/Gurski.pdf, 2007, Abstract for
  Category Theory 2007 in Algarve, Portugal.

\bibitem[Gur13]{gurski_3coh}
N.~Gurksi, \emph{Coherence in three-dimensional category theory}, Cambridge
  University Press, 2013.

\bibitem[Hof11]{hoffnung_tetra}
A.~E. Hoffnung, \emph{Spans in 2-categories: {A} monoidal tricategory},
  arXiv:1112.0560, 2011, v2 uploaded 18 Sep 2013.

\bibitem[Hov99]{hovey}
M.~Hovey, \emph{Model categories}, Mathematical Surveys and Monographs,
  vol.~63, American Mathematical Society, 1999.

\bibitem[JS93]{joyalstreet}
A.~Joyal and R.~Street, \emph{Braided tensor categories}, Advances in
  Mathematics \textbf{102} (1993), 20--78.

\bibitem[JY21]{johnsonyau}
N.~Johnson and D.~Yau, \emph{2-{D}imensional {C}ategories}, Oxford University
  Press, 2021.

\bibitem[KV94a]{kapranovvoevodskyI}
M.~Kapranov and V.~Voevodsky, \emph{2-{C}ategories and {Z}amolodchikov
  tetrahedra}, Proc. Symp. Pure Math. \textbf{56} (1994), 177--260.

\bibitem[KV94b]{kapranovvoevodskyII}
\bysame, \emph{Braided monoidal 2-categories and {M}anin--{S}chechtman higher
  braid groups}, J. Pure Appl. Algebra \textbf{92} (1994), 241--267.

\bibitem[Lac00]{lackpsm}
S.~Lack, \emph{A coherent approach to pseudomonads}, Advances in Mathematics
  \textbf{152} (2000), 179–202.

\bibitem[Lac02]{lack2catms}
\bysame, \emph{A {Q}uillen model structure for 2-categories}, K-Theory
  \textbf{26} (2002), no.~2, 171–205.

\bibitem[Lac11]{lackgraycatms}
\bysame, \emph{A {Q}uillen model structure for {G}ray-categories}, K-Theory
  \textbf{8} (2011), no.~2, 183–221.

\bibitem[Lan71]{maclane}
S.~Mac Lane, \emph{Categories for the working mathematician}, vol.~5, Springer,
  New York, 1971.

\bibitem[Lur17]{lurieha}
J.~Lurie, \emph{Higher algebra},
  \url{http://www.math.harvard.edu/~lurie/papers/HA.pdf}, 2017, Last update 18
  September 2017.

\bibitem[May72]{may_loop}
J.~P. May, \emph{The geometry of iterated loop spaces}, Lecture Notes in
  Mathematics, no. 271, Springer-Verlag, Berlin, New York, 1972.

\bibitem[McC00]{mccrudden}
J.C. McCrudden, \emph{Balanced coalgebroids}, Theory and Applications of
  Categories \textbf{7} (2000), no.~6, 71--147.

\bibitem[SP09]{schommerpries_thesis}
C.~J. Schommer-Pries, \emph{The classification of two-dimensional extended
  topological field theories}, Ph.D. thesis, University of California,
  Berkeley, Berkeley, CA 94720-3840, United States, 2009.

\bibitem[SS00]{schwedeshipley_mon}
S.~Schwede and B.~Shipley, \emph{Algebras and modules in monoidal model
  categories}, Proc. London Math. Soc. \textbf{80} (2000), no.~3, 491--511.

\bibitem[Ste26]{rs_halgicatI}
R.~Stenzel, \emph{The higher algebra and geometry of bicategories},
  arXiv:2602.14424, 2026.

\end{thebibliography}
\end{document}